\numberwithin{equation}{section}
\newtheorem{theorem}{Theorem}[section]
\newtheorem*{proposition*}{Proposition}
\newtheorem{proposition}[theorem]{Proposition}
\newtheorem{corollary}[theorem]{Corollary}
\newtheorem{lemma}[theorem]{Lemma}
\newtheorem{example}[theorem]{Example}
\newtheorem{remark}[theorem]{Remark}
\newtheorem{definition}[theorem]{Definition}
\newtheorem{question}[theorem]{Question}
\newtheorem{conjecture}[theorem]{Conjecture}
\DeclareMathOperator{\Hom}{Hom}
\DeclareMathOperator{\Ext}{Ext}
\DeclareMathOperator{\Tor}{Tor}
\DeclareMathOperator{\Supp}{Supp}
\DeclareMathOperator{\depth}{depth}
\DeclareMathOperator{\cidim}{CI-dim}
\DeclareMathOperator{\pd}{pd}
\DeclareMathOperator{\gdim}{G-dim}
\DeclareMathOperator{\Tr}{Tr}
\DeclareMathOperator{\Spec}{Spec}
\DeclareMathOperator{\X}{X}
\DeclareMathOperator{\height}{ht}
\DeclareMathOperator{\NF}{NF}
\begin{document} 

\title[Vanishing of (co)homology, freeness, and Auslander-Reiten conjecture]{Vanishing of (co)homology, freeness criteria, and the Auslander-Reiten conjecture for Cohen-Macaulay Burch rings} 

\author{Rafael Holanda}
\address{Departamento de Matemática, Universidade Federal da Paraíba - 58051-900, João Pessoa, PB, Brazil}
\email{rfh@academico.ufpb.br, rf.holanda@gmail.com}

\author{Cleto B. Miranda-Neto}
\address{Departamento de Matemática, Universidade Federal da Paraíba - 58051-900, João Pessoa, PB, Brazil}
\email{cleto@mat.ufpb.br}

\date{\today}

\keywords{Vanishing of (co)homology, freeness criteria, Auslander-Reiten conjecture, Huneke-Wiegand conjecture, Zariski-Lipman conjecture.}
\subjclass[2020]{Primary 13D05, 13D07, 13H05, 13C10, 13C14; Secondary 13H10, 13D02}


\maketitle

\begin{abstract} We establish new results on (co)homology vanishing and Ext-Tor dualities, and derive a number of freeness criteria for finite modules over Cohen-Macaulay local rings. In the main application, we settle the long-standing Auslander-Reiten conjecture for the class of Cohen-Macaulay Burch rings, among other results toward this and related problems, e.g., the Tachikawa and Huneke-Wiegand conjectures. We also derive results on further topics of interest such as Cohen-Macaulayness of tensor products and Tor-independence, and inspired by a paper of Huneke and Leuschke we obtain characterizations of when a local ring is regular, or a complete intersection, or Gorenstein; for the regular case, we describe progress on some classical differential problems, e.g., the strong version of the Zariski-Lipman conjecture. Along the way, we generalize several results from the literature and propose various questions.
\end{abstract}

\section{Motivation: Three celebrated homological conjectures}

One of the most celebrated problems in homological commutative algebra is the following commutative local version of the conjecture (initially raised for Artin algebras and known to be equivalent to the so-called Generalized Nakayama Conjecture) from \cite[p.\,70]{AR}.

\begin{conjecture}[Auslander-Reiten]\label{arconjecture}
Let $R$ be a $($Noetherian, commutative, unital$)$ local ring. If $M$ is a finitely generated $R$-module such that $\Ext^j_R(M,M)=\Ext^j_R(M,R)=0$ for all $j>0$, then $M$ is free.
\end{conjecture}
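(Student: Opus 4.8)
In full generality Conjecture~\ref{arconjecture} is wide open, so the plan is to establish it under the standing hypothesis of the paper, namely that $R$ is a Cohen--Macaulay Burch ring. First I would reduce to the complete case: passing to $\widehat{R}$ preserves Cohen--Macaulayness, the Burch property and non-regularity, while faithful flatness propagates the vanishing $\Ext^j_R(M,R)=\Ext^j_R(M,M)=0$ to $\widehat{M}$ and lets freeness descend; completeness moreover guarantees a canonical module. If $R$ is regular, then $\pd_R M<\infty$ automatically and the argument below finishes at once, so I may assume $R$ is complete, Cohen--Macaulay, Burch and \emph{not} regular.

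The engine of the proof is the cohomological vanishing dichotomy for Burch rings (in the spirit of Dao, Kobayashi and Takahashi, and of the vanishing results developed here): over a non-regular Burch local ring, $\Ext^i_R(X,Y)=0$ for all $i\gg 0$ forces $\pd_R X<\infty$ or $\id_R Y<\infty$. Applying this to the pair $(M,R)$, the hypothesis $\Ext^{>0}_R(M,R)=0$ yields two alternatives: either $\pd_R M<\infty$, or $\id_R R<\infty$, the latter meaning precisely that $R$ is Gorenstein.

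In the first alternative I would conclude freeness by a minimal-resolution duality: if $\pd_R M=n\ge 1$, then dualizing a minimal free resolution of $M$ into $R$ leaves all differentials with entries in $\mathfrak{m}$, whence $\Ext^n_R(M,R)=\coker(F_{n-1}^{*}\to F_n^{*})\ne 0$ by Nakayama, contradicting $\Ext^{>0}_R(M,R)=0$; hence $n=0$ and $M$ is free. In the Gorenstein alternative I bring in the second hypothesis $\Ext^{>0}_R(M,M)=0$ and apply the dichotomy once more, now to $(M,M)$, obtaining $\pd_R M<\infty$ (already handled) or $\id_R M<\infty$. Since it is classical that over a Gorenstein local ring a finitely generated module has finite injective dimension if and only if it has finite projective dimension, the latter again gives $\pd_R M<\infty$, and the minimal-resolution argument forces $M$ free.

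The decisive step---and the main obstacle---is the Burch vanishing dichotomy itself. Its underlying mechanism is that over a non-regular Burch ring the residue field $k$ is a direct summand of a high syzygy of any module of infinite projective dimension; through dimension shifting this turns the nonvanishing of $\Ext^{\depth M}_R(k,M)$ into a nonvanishing $\Ext^{>0}_R(M,M)\ne 0$, contradicting the hypothesis and thereby forcing $\pd_R M<\infty$. Confirming that the Burch property passes to the completion and genuinely yields this splitting, together with securing the finite-injective-dimension reduction in the Gorenstein case, are the points I expect to demand the most care.
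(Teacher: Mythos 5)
Your endgame steps are all fine: the Nakayama argument showing that $\pd_RM<\infty$ together with $\Ext^{>0}_R(M,R)=0$ forces $M$ free is standard, as is the classical fact (Foxby) that over a Gorenstein local ring a finite module has finite injective dimension if and only if it has finite projective dimension, and the reduction to the complete case is harmless since the Burch property is defined through the completion anyway. The problem is the step you yourself single out as decisive: the ``Burch vanishing dichotomy'' asserting that over a non-regular Burch ring, $\Ext^i_R(X,Y)=0$ for all $i\gg0$ forces $\pd_RX<\infty$ or $\id_RY<\infty$. This is not a citable result in the generality you need: what \cite{DKT} provides, and what the paper imports as Lemma \ref{burchringfinitepdlemma}, is a \emph{Tor} dichotomy with conclusion ``$\pd_RM<\infty$ or $\pd_RN<\infty$.'' Worse, your sketched mechanism for the Ext dichotomy is false in positive depth: over a local ring of depth at least one, every nonzero first (hence every higher) syzygy is a submodule of a free module and so admits a regular element, i.e., has positive depth; therefore the residue field $k$ can \emph{never} be a direct summand of a syzygy there. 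The splitting of $k$ off high syzygies of modules of infinite projective dimension is a depth-zero phenomenon, and transporting it (and the Ext-vanishing hypotheses) through an Artinian reduction modulo a maximal regular sequence, for modules that are not maximal Cohen-Macaulay, is precisely the nontrivial technical content your proposal leaves unaddressed.

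The gap is visible in how the paper actually argues. In the non-Gorenstein case it does not use an Ext dichotomy at all: it first converts $\Ext^{>0}_R(M,R)=0$ into Tor-independence of $M$ and $\omega_R$ via the spectral-sequence machinery (Theorem \ref{lmgeneralization} and Corollary \ref{CMtensorequivalence}(i)), and only then invokes the Tor dichotomy of \cite{DKT}; since $\omega_R$ is maximal Cohen-Macaulay, $\pd_R\omega_R<\infty$ would make $R$ Gorenstein, so $\pd_RM<\infty$ follows (Proposition \ref{burchfreeness} and Corollary \ref{arburchcondition}). This rigorously recovers exactly the instance of your dichotomy for the pair $(M,R)$, so your first branch can be repaired. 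Your Gorenstein branch, however, applies the dichotomy to $(M,M)$ with $M$ an \emph{arbitrary} finite module, and there no such rescue exists: the paper's machinery requires the second argument to be maximal Cohen-Macaulay, and one cannot replace $M$ by a syzygy without destroying the self-Ext vanishing. The paper sidesteps this entirely by a different argument -- a Gorenstein Burch ring is a hypersurface by \cite[Proposition 5.1]{DKT}, and the Auslander-Reiten conjecture holds for complete intersections \cite{AY}, which genuinely uses $\Ext^{>0}_R(M,M)=0$ through support-variety techniques \cite{AB}. As written, then, your proof does not go through: its central dichotomy is unproved, and the heuristic offered for it cannot be correct beyond depth zero.
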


This problem has been extensively explored in the literature but remains open even if $R$ is Gorenstein, among a number of other situations. Some of the known cases can be found in 
\cite{A,  ACST, AY, AB, GT, HL, HSV, KOT, STAK}.

By considering Cohen-Macaulay local rings possessing a canonical (i.e., dualizing) module, the Auslander-Reiten conjecture is immediately seen to imply the following version (suggested in \cite{AB2}; see also \cite{HH}) of the long-standing Tachikawa conjecture.  

\begin{conjecture}[Tachikawa]\label{tachikawanconjecture}
Let $R$ be a $($Noetherian, commutative, unital$)$ Cohen-Macaulay local ring with canonical module $\omega_R$. If $\Ext^j_R(\omega_R,R)=0$ for all $j>0$, then $R$ is Gorenstein.
\end{conjecture}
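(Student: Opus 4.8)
The plan is to read the statement as the conditional implication asserted in the preceding sentence: granting Conjecture~\ref{arconjecture}, deduce the Tachikawa conclusion. Concretely, I would take $M=\omega_R$ as the test module in the Auslander--Reiten conjecture. So assume $R$ is Cohen--Macaulay local with canonical module $\omega_R$ and that $\Ext^j_R(\omega_R,R)=0$ for all $j>0$; the goal is to show $\omega_R\cong R$, which is exactly Gorensteinness.

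First I would invoke the two standard homological properties of the canonical module over a Cohen--Macaulay local ring: (i) $\omega_R$ is a maximal Cohen--Macaulay module, and (ii) the functor $\Hom_R(-,\omega_R)$ is a duality on maximal Cohen--Macaulay modules, so that $\Ext^j_R(N,\omega_R)=0$ for every maximal Cohen--Macaulay $N$ and every $j>0$, while the homothety map gives $\Hom_R(\omega_R,\omega_R)\cong R$. Applying (ii) to $N=\omega_R$, which is itself maximal Cohen--Macaulay by (i), yields $\Ext^j_R(\omega_R,\omega_R)=0$ for all $j>0$. Thus the self-Ext vanishing needed to run the Auslander--Reiten machine is \emph{free of charge} for the canonical module.

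With these in place, $M=\omega_R$ satisfies both hypotheses of Conjecture~\ref{arconjecture}: the self-Ext vanishing just verified, and the Ext-into-$R$ vanishing that is the standing assumption. Granting the Auslander--Reiten conjecture, I conclude that $\omega_R$ is free, say $\omega_R\cong R^{\oplus n}$. To upgrade this to $\omega_R\cong R$, I would take endomorphism rings: $R\cong\Hom_R(\omega_R,\omega_R)\cong\Hom_R(R^{\oplus n},R^{\oplus n})\cong R^{\oplus n^2}$, and comparing minimal numbers of generators (i.e., tensoring with the residue field $k$) forces $n^2=1$, hence $n=1$ and $\omega_R\cong R$. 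Alternatively, since $\omega_R$ always has finite injective dimension equal to $\dim R$, freeness gives $\id_R R=\id_R\omega_R<\infty$, and Bass' theorem makes $R$ Gorenstein directly.

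The honest assessment is that there is no genuine obstacle to surmount beyond accepting the Auslander--Reiten conjecture: the entire content lies in recognizing $\omega_R$ as the natural module whose self-Ext vanishes automatically, so that the Auslander--Reiten freeness conclusion translates, with no further hypotheses, into the Gorenstein conclusion. The only place requiring a moment of care is the final passage from ``$\omega_R$ free'' to ``$\omega_R\cong R$,'' which the endomorphism-ring computation (or the injective-dimension argument) settles cleanly.
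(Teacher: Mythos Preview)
Your proposal is correct and matches the paper's approach exactly: the paper simply asserts that the Auslander--Reiten conjecture ``is immediately seen to imply'' the Tachikawa conjecture, and your argument (take $M=\omega_R$, use the automatic vanishing $\Ext^j_R(\omega_R,\omega_R)=0$ for $j>0$, invoke Conjecture~\ref{arconjecture}, then pass from $\omega_R$ free to $R$ Gorenstein) is precisely the intended unpacking of that sentence.
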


Next, we invoke yet another famous conjecture concerning Ext vanishing. It was proposed in \cite{HW} and later reformulated in \cite{HIW}.

\begin{conjecture}[Huneke-Wiegand]\label{hiwconjecture} Let $R$ be a $($Noetherian, commutative, unital$)$ one-dimensional Gorenstein local domain. If $M$ is a finitely generated torsionfree $R$-module such that $\Ext^1_R(M, M)=0$, then $M$ is free.
\end{conjecture}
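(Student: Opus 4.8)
The plan is to convert the cohomological hypothesis into the torsion-freeness of a tensor product, and then to extract freeness from the rigidity that the one-dimensional Gorenstein structure imposes. Throughout, since $R$ is a one-dimensional domain, a finitely generated module is torsion-free exactly when it is maximal Cohen-Macaulay; hence both $M$ and $M^{\ast}=\Hom_R(M,R)$ are maximal Cohen-Macaulay, and as $R$ is moreover Gorenstein of dimension one they are reflexive and totally reflexive, with $\Ext^i_R(M,R)=\Ext^i_R(M^{\ast},R)=0$ for all $i>0$. Setting $r=\rank M$, the desired conclusion $M\cong R^{r}$ is equivalent to the numerical equality $\mu(M)=r$ between the minimal number of generators $\mu(M)$ and the rank, because any minimal surjection $R^{r}\twoheadrightarrow M$ is generically an isomorphism and therefore injective over a domain.

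The technical heart is an Ext--Tor bridge. Taking a minimal presentation $F_{1}\xrightarrow{\phi}F_{0}\to M\to 0$ and forming the Auslander transpose $\Tr M=\coker\phi^{\ast}$, the Auslander--Bridger exact sequence
\[
0\to \Ext^1_R(\Tr M, M^{\ast})\to M\otimes_R M^{\ast}\to \Hom_R(M^{\ast},M^{\ast})\to \Ext^2_R(\Tr M, M^{\ast})\to 0
\]
exhibits the torsion submodule of $M\otimes_R M^{\ast}$ as $\Ext^1_R(\Tr M,M^{\ast})$, since $\Hom_R(M^{\ast},M^{\ast})$ is torsion-free while $\Ext^1_R(\Tr M,M^{\ast})$ vanishes at the generic point and is therefore torsion. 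By the reformulation of \cite{HIW}, this vanishes precisely when $\Ext^1_R(M,M)=0$; thus the hypothesis is equivalent to $M\otimes_R M^{\ast}$ being torsion-free, and the problem is reduced to the purely tensorial assertion that torsion-freeness of $M\otimes_R M^{\ast}$ forces $M$ to be free.

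For this final implication I would proceed by induction on $\rank M$, peeling off rank-one pieces through Bourbaki-type sequences so as to reduce to the case of an ideal $I$ with $I\otimes_R I^{-1}$ torsion-free, for which one must show $I$ is principal. Here the decisive input is rigidity of $\Tor$: the evaluation map $I\otimes_R I^{-1}\to R$ has torsion kernel measured by $\Tor_1^R(I,I^{-1})$, and vanishing of that torsion should, under enough regularity, propagate to the vanishing of all higher $\Tor_i^R(I,I^{-1})$, whence projectivity and finally freeness over the local ring $R$. This is exactly where the one-dimensional Gorenstein hypothesis alone is not known to suffice, and I expect it to be the main obstacle: rigidity of $\Tor$ is available over complete intersections and, as developed in this paper, over Cohen--Macaulay Burch rings, but is unavailable for general Gorenstein rings. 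Accordingly, my proposal establishes the statement within the scope of the freeness and rigidity criteria obtained earlier, and I would isolate the genuinely open residue precisely as the failure of such rigidity as the obstruction to removing the extra structural hypothesis.
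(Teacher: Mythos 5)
The statement you set out to prove is Conjecture \ref{hiwconjecture}, which is an \emph{open problem}: the paper does not prove it, and neither do you. What the paper offers is the partial result Corollary \ref{mcmvectorbundle2}, which obtains the conclusion under the additional hypothesis $\Ext^2_R(\Tr M,M^\dagger)=0$ (and with $R$ merely reduced with a canonical module). Measured against that, your first two paragraphs are correct but gain no ground: they re-derive the known equivalence between the $\Ext$-formulation of \cite{HIW} and the original tensor-product formulation of \cite{HW}. Indeed, since $R$ is a one-dimensional domain, $M$ is free at the generic prime, hence a vector bundle; your Auslander--Bridger sequence correctly identifies $\Ext^1_R(\Tr M,M^*)$ with the torsion submodule of $M\otimes_RM^*$; and this module vanishes if and only if $\Ext^1_R(M,M)=0$, which is precisely the duality recorded in the paper as Lemma \ref{nonfreelemma} with $d=1$, $N=M$, and $M^\dagger\cong M^*$ because $R$ is Gorenstein. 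So after two paragraphs you are standing at the original conjecture of \cite{HW}, reformulated but untouched.

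The genuine gap is your third paragraph, and you half-concede it yourself. First, the induction step is unjustified: even granting a Bourbaki-type sequence $0\to F\to M\to I\to 0$ with $F$ free and $I$ an ideal, you give no argument that torsion-freeness of $M\otimes_RM^*$ descends to torsion-freeness of $I\otimes_RI^{-1}$, and no such descent is known; tensoring that sequence with $M^*$ (or dualizing it) does not produce the needed comparison, so the reduction to rank one does not get off the ground. Second, the rigidity of $\Tor$ you then invoke is available over hypersurfaces (which is exactly how \cite{HW} settled that case) but is not available over arbitrary one-dimensional Gorenstein rings --- this unavailability is \emph{why} the conjecture is open, so appealing to it is circular. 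It is instructive to see how the paper sidesteps this: in Corollary \ref{mcmvectorbundle} (whose $d=1$ case yields Corollary \ref{mcmvectorbundle2}), the hypothesis $\Ext^1_R(M,M)=0$ gives $\Ext^1_R(\Tr M,M^\dagger)=0$ by Lemma \ref{nonfreelemma}, and the \emph{extra} assumed vanishing $\Ext^{2}_R(\Tr M,M^\dagger)=0$ then feeds into the spectral-sequence result Theorem \ref{lmgeneralization}$(ii)$ to produce $\Tor_1^R(\Tr M,M)=0$, whence $M$ is free by Lemma \ref{trlemma}. That single additional $\Ext$ vanishing is exactly the surrogate for the rigidity your sketch lacks. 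As written, your proposal is an honest reformulation of the conjecture together with an acknowledged open step; it cannot be accepted as a proof, nor does it recover the paper's partial result, since you never isolate a usable substitute hypothesis.
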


Notice that these three conjectures are particular manifestations of a more general, important line of research which remains extremely active in homological commutative algebra, namely, that of giving criteria or characterizing  freeness of modules over local rings by means of (co)homology vanishing. 



In the present paper, and in regard to the above problems, our first main contribution is to confirm the Auslander-Reiten conjecture (hence the Tachikawa conjecture) for Cohen-Macaulay Burch rings; here let us recall that the class of Burch rings was introduced in \cite{DKT}, where in particular it was shown that such a class contains, e.g., all hypersurface local rings, all Cohen-Macaulay local rings having minimal multiplicity and infinite residue field, and all Cohen-Macaulay quotients (localized at the homogeneous maximal ideal) of graded polynomial rings over an infinite field by homogeneous ideals having a linear resolution. Furthermore, we obtain a result toward the Huneke-Wiegand conjecture which depends on the vanishing of one additional Ext module but requires fewer conditions on the ring. Such results are presented in Section \ref{conjecturessec}. In Section \ref{freenesssec}, we establish a number of new freeness criteria for modules (over Cohen-Macaulay local rings), scattered along its three subsections.

The above-mentioned achievements are, essentially, byproducts of our main technical results, which rely on spectral sequence methods and are given in Section \ref{mainsect}. They deal mainly with the vanishing of Ext and Tor modules and relations to certain dualities. In the same section we also give the first applications, which concern topics of high current interest such as 
Cohen-Macaulayness (e.g., of tensor products) and Tor-independence.

In addition, we provide a last section containing further applications, namely, we use our freeness criteria to give cohomological characterizations of regular, complete intersection, and Gorenstein local rings. In our study of regular rings, we detect several connections to long-standing conjectures involving modules of derivations and differentials, such as the strong version of the Zariski-Lipman conjecture (see \cite{Her}, also \cite{VC}) and Berger's conjecture (see \cite{Ber}, also \cite{BeSur}).

Throughout the paper, we generalize and improve a number of
results from the literature, and we propose several questions which, in our view, can be of interest.

\medskip

\noindent {\it Conventions and some notations.} By a \textit{ring} we mean a commutative unitary Noetherian ring. By a {\it finite} $R$-module we mean a finitely generated $R$-module. The \textit{algebraic dual} of an $R$-module $M$ is $M^*=\Hom_R(M,R)$. Whenever $R$ is local, we write $\mathfrak{m}$ for its maximal ideal. In this case, the \emph{Matlis dual} of $M$ is written $M^\vee$. If in addition $R$ admits a canonical module $\omega_R$, we denote the \emph{canonical dual} of $M$ by $M^\dagger=\Hom_R(M, \omega_R)$. The depth of finite $R$-modules is always meant to be the ${\mathfrak m}$-depth. The projective dimension (resp. injective dimension) of a finite $R$-module $M$ is written ${\rm pd}_RM$ (resp. ${\rm id}_RM$). The height of an ideal ${\mathfrak a}$ of $R$ is denoted $\height {\mathfrak a}$. For background on commutative and homological algebra -- e.g., the (maximal) Cohen-Macaulay property, canonical modules, Ext and Tor functors, and local duality -- we refer to \cite{BH}.

\section{Vanishing of (co)homology and dualities (and first applications)}\label{mainsect}

In this section, we prove our main technical result on Ext and Tor vanishing, which in particular provides new duality results and extends the statements given in \cite[Lemma 3.4]{LM} (which is one of the main technical tools therein). We point out that, although a simpler proof for \cite[Lemma 3.4(1)]{LM} was given recently in \cite[Corollary 2.4]{KOT}, our method also provides more information about the (co)homology modules involved. Moreover, it should be noticed that item $(ii)$ below generalizes \cite[Lemma 2.3]{HH}, which can be retrieved by taking $n=1$. In addition, our result furnishes, as a bonus, two five-term exact sequences involving suitable Ext and Tor modules. 


The first applications of our theorem are also presented in this section and are concerned with relevant topics such as Cohen-Macaulayness (e.g., of tensor products), Tor-independence and the so-called depth formula. Numerous other applications will be described throughout the paper.


\begin{theorem}\label{lmgeneralization}
Let $R$ be a Cohen-Macaulay local ring of dimension $d$ and having a canonical module $\omega_R$. Let $M$ and $N$ be finite $R$-modules and suppose $N$ is maximal Cohen-Macaulay. Let $n$ be a non-negative integer. The following statements hold:
\begin{itemize}
    \item [(i)] If $\Ext^j_R(M,N)=0$ for all $j= 1, \ldots, d$, then there is an isomorphism $M\otimes_RN^\dagger\cong\Hom_R(M,N)^\dagger$, and the $R$-modules $M\otimes_RN^\dagger$ and $\Hom_R(M,N)$ are maximal Cohen-Macaulay;
    
    \item [(ii)] If $\Ext^j_R(M,N)=0$ for all $j= 1, \ldots, d+n$, then
    $$\Ext^d_R(\Ext^{d+n+1}_R(M,N),\omega_R)\cong\Tor_{n+1}^R(M,N^\dagger)$$ and, if $n\geq 1$, $\Tor^R_i(M,N^\dagger)=0$ for all $i= 1, \ldots, n$. Moreover, there exists an exact sequence
    $$\xymatrix@=1em{\Tor^R_{n+3}(M,N^\dagger)\ar[r] & \Ext^{d-2}_R(\Ext^{d+n+1}_R(M,N),\omega_R)\ar[r] & \Ext^d_R(\Ext^{d+n+2}_R(M,N),\omega_R)\ar[dl] \\ & \Tor^R_{n+2}(M,N^\dagger)\ar[r] & \Ext^{d-1}_R(\Ext^{d+n+1}_R(M,N),\omega_R)\ar[r] & 0;}$$

    \item [(iii)] If $\Tor_j^R(M,N^\dagger)=0$ for all $j= 1, \ldots, d+n$, then
    $$\Tor_{d+n+1}^R(M,N^\dagger)^\dagger\cong\Ext^{d+n+1}_R(M,N),$$
    $$\Ext^i_R(M\otimes_R N^\dagger,\omega_R)\cong\Ext^i_R(M,N) \quad \mbox{for \,all} \quad i= 0, \ldots, d$$ 
    and, if $n\geq 1$, $\Ext^k_R(M,N)=0$ for all $k= d+1, \ldots, d+n$. Also, there is an exact sequence
    $$\xymatrix@=1em{
    0\ar[r] & \Ext^1_R(\Tor_{d+n+1}^R(M,N^\dagger),\omega_R)\ar[r] & \Ext^{d+n+2}_R(M,N)\ar[r] & \Tor^R_{d+n+2}(M,N^\dagger)^\dagger\ar[dl]
    \\
    & & \Ext^2_R(\Tor_{d+n+1}^R(M,N^\dagger),\omega_R)\ar[r] & \Ext^{d+n+3}_R(M,N).
    }$$
\end{itemize}
\end{theorem}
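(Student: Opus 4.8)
The plan is to package all three items into two hypercohomology spectral sequences that flow from a single pair of derived-category identities, and then to let the vanishing hypotheses collapse them. The starting point is that, since $N$ is maximal Cohen-Macaulay, its canonical dual $N^\dagger$ is again maximal Cohen-Macaulay, $N^{\dagger\dagger}\cong N$, and $\Ext^i_R(N^\dagger,\omega_R)=0$ for $i>0$; hence $\mathbf R\Hom_R(N^\dagger,\omega_R)\simeq N$ in the derived category. Combining this with tensor-hom adjunction yields the first identity
\[ \mathbf R\Hom_R(M\otimes_R^{\mathbf L}N^\dagger,\omega_R)\ \simeq\ \mathbf R\Hom_R(M,N). \]
Applying the dualizing functor $\mathbf R\Hom_R(-,\omega_R)$ once more and invoking biduality for the dualizing module (local duality) on the complex $M\otimes_R^{\mathbf L}N^\dagger$ gives the second identity
\[ \mathbf R\Hom_R(\mathbf R\Hom_R(M,N),\omega_R)\ \simeq\ M\otimes_R^{\mathbf L}N^\dagger. \]

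Next I would feed each of the two complexes $M\otimes_R^{\mathbf L}N^\dagger$ and $\mathbf R\Hom_R(M,N)$ into the standard spectral sequence computing $\mathbf R\Hom_R(-,\omega_R)$ from the (co)homology of the argument, using the two identities to name the abutments. This produces
\[ {}^{\mathrm{I}}E_2^{p,q}=\Ext^p_R(\Tor^R_q(M,N^\dagger),\omega_R)\ \Longrightarrow\ \Ext^{p+q}_R(M,N), \]
with differentials $d_r\colon(p,q)\to(p+r,q-r+1)$, and
\[ {}^{\mathrm{II}}E_2^{p,q}=\Ext^p_R(\Ext^q_R(M,N),\omega_R)\ \Longrightarrow\ \Tor^R_{q-p}(M,N^\dagger), \]
with differentials $d_r\colon(p,q)\to(p+r,q+r-1)$. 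The decisive structural feature is that $\id_R\omega_R=d$, so both sequences are concentrated in the finitely many columns $0\le p\le d$; this forces convergence even when $M$ has infinite projective dimension, and makes every filtration finite. Item (iii) then reads off ${}^{\mathrm{I}}E$: the hypothesis $\Tor^R_j(M,N^\dagger)=0$ for $1\le j\le d+n$ annihilates the rows $1\le q\le d+n$, leaving only $q=0$ and $q\ge d+n+1$; the single surviving term in each low total degree gives the edge isomorphisms $\Ext^i_R(M\otimes_RN^\dagger,\omega_R)\cong\Ext^i_R(M,N)$ for $0\le i\le d$, the vanishing $\Ext^k_R(M,N)=0$ for $d<k\le d+n$, and the corner isomorphism $\Ext^{d+n+1}_R(M,N)\cong\Tor^R_{d+n+1}(M,N^\dagger)^\dagger$ coming from $E_2^{0,d+n+1}$. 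Symmetrically, item (ii) reads off ${}^{\mathrm{II}}E$, where $\Ext^j_R(M,N)=0$ for $1\le j\le d+n$ leaves the rows $q=0$ and $q\ge d+n+1$, producing $\Tor^R_i(M,N^\dagger)=0$ for $1\le i\le n$ and the corner isomorphism $\Ext^d_R(\Ext^{d+n+1}_R(M,N),\omega_R)\cong\Tor^R_{n+1}(M,N^\dagger)$.

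Item (i) is the bottom edge of ${}^{\mathrm{II}}E$ in the case the vanishing is imposed only up to $d$: the term $E_\infty^{0,0}$ yields $M\otimes_RN^\dagger\cong\Hom_R(M,N)^\dagger$, while $E_\infty^{p,0}=0$ for $1\le p\le d$ gives $\Ext^p_R(\Hom_R(M,N),\omega_R)=0$, so by local duality $\Hom_R(M,N)$ is maximal Cohen-Macaulay, and hence so is its canonical dual $M\otimes_RN^\dagger$. The two five-term exact sequences are the part that requires genuine bookkeeping rather than a mere collapse: once the forbidden rows are removed, the only differential that can be nonzero among the relevant corner entries is $d_2$, since every $d_r$ with $r\ge 3$ either originates in a vanishing row or lands outside the strip $0\le p\le d$. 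Tracking $d_2$ across the three lowest surviving total degrees, together with the two-step filtrations on the corresponding abutments, assembles exactly the displayed sequences, with $d_2$ serving as the middle connecting map. I expect the main obstacle to be precisely this differential analysis—verifying that no higher $d_r$ interferes and pinning down the filtration subquotients so that the relevant kernels and cokernels align; a secondary technical point is justifying the biduality identity for the possibly unbounded complex $M\otimes_R^{\mathbf L}N^\dagger$, which is exactly where the finite injective dimension of $\omega_R$ is indispensable.
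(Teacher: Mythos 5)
Your proposal is correct and follows essentially the same route as the paper: the paper constructs exactly your two spectral sequences ${}^{\mathrm{I}}E$ and ${}^{\mathrm{II}}E$ from the double-complex isomorphism $F_\bullet\otimes_R\Hom_R(N,E^\bullet)\cong\Hom_R(\Hom_R(F_\bullet,N),E^\bullet)$ (with $F_\bullet$ a free resolution of $M$ and $E^\bullet$ a bounded injective resolution of $\omega_R$), using the collapse of one filtration (via $N$, resp. $N^\dagger$, being maximal Cohen-Macaulay) to name the abutment, and then performs the same column-strip, corner, and five-term analysis you describe. The only substantive difference is that this explicit double-complex construction yields your second identity $\mathbf{R}\Hom_R(\mathbf{R}\Hom_R(M,N),\omega_R)\simeq M\otimes_R^{\mathbf{L}}N^\dagger$ directly, thereby sidestepping the biduality-for-unbounded-complexes issue that you rightly flag as the technical cost of the derived-category formulation.
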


\begin{proof} Over $R$, pick a free resolution $F_\bullet$ of $M$ and an injective resolution $E^\bullet$ of $\omega_R$. The isomorphism of double complexes
$$F_\bullet\otimes_R\Hom_R(N,E^\bullet)\cong\Hom_R(\Hom_R(F_\bullet,N),E^\bullet)$$
gives rise to two first quadrant spectral sequences, converging to the same graded $R$-module $H$,
$$E_2^{p,q}=\Ext^p_R(\Ext^q_R(M,N),\omega_R)\Rightarrow_p H^{q-p} \ \mbox{and} \ 'E_2^{p,q}=\Tor_p^R(M,\Ext^q_R(N,\omega_R))\Rightarrow_p H^{p-q}.$$
Since $N$ is maximal Cohen-Macaulay, \cite[Proposition 3.3.3]{BH} yields $'E_2^{p,q}=0$ for $q>0$, so that $'E$ collapses and  $\Tor^R_{q-p}(M,N^\dagger)\cong{} 'E_2^{q-p,0}\cong H^{q-p}$. Thus we have a spectral sequence $$E_2^{p,q}=\Ext^p_R(\Ext^q_R(M,N),\omega_R)\Rightarrow_p\Tor^R_{q-p}(M,N^\dagger),$$ which, because of local duality, must satisfy $E_2^{p,q}=0$ for all $p>d$.


Let us use this spectral sequence to prove parts $(i)$ and $(ii)$. First notice that, if $\Ext^j_R(M,N)=0$ whenever $1\leq j\leq d +n$ (for $(i)$ we consider $n=0$), the second page of $E$ has the shape

$$\xymatrix@=1em{
E_2^{0,d+n+2} & \cdots & E_2^{d-2,d+n+2} & E_2^{d-1,d+n+2} & E_2^{d,d+n+2} & 0 & \cdots
\\
E_2^{0,d+n+1} & \cdots & E_2^{d-2,d+n+1}\ar[rru]^{d_2} & E_2^{d-1,d+n+1} & E_2^{d,d+n+1} & 0 & \cdots
\\
0 & \cdots & 0 & 0 & 0 & 0 &\cdots
\\
\vdots & \iddots & \vdots & \vdots & \vdots & \vdots & \cdots
\\
0 & \cdots & 0 & 0 & 0 & 0 & \cdots
\\
E_2^{0,0}\ar@{--}[rrrrruuu] & \cdots & E_2^{d-2,0} & E_2^{d-1,0} & E_2^{d,0} & 0 & \cdots
}$$
where the dotted line corresponds to the line $Y-X=0$ in the first quadrant.
Therefore, the convergence of $E$ ensures that $$\Hom_R(M,N)^\dagger=E_2^{0,0}\cong M\otimes_RN^\dagger$$ and $$\Ext^j_R(\Hom_R(M,N),\omega_R)=E_2^{j,0}\cong\Tor^R_{-j}(M,N^\dagger)=0 \quad \mbox{for \,all} \quad j>0,$$ which implies $(i)$. Now, to conclude that $(ii)$ holds, we observe that the homomorphism $d_2$ in the diagram above is at the corner of $E$, which gives us the five-term-type exact sequence
$$\xymatrix@=1em{\Tor_{n+3}^R(M,N^\dagger)\ar[r] & E_2^{d-2,d+n+1}\ar[r]^{d_2} & E_2^{d,d+n+2}\ar[r] & \Tor_{n+2}^R(M,N^\dagger)\ar[r] & E_2^{d-1,d+n+1}\ar[r] & 0,}$$ 
and, from the convergence, $$\Ext^d_R(\Ext^{d+n+1}_R(M,N),\omega_R)=E_2^{d,d+n+1}\cong\Tor_{n+1}^R(M,N^\dagger).$$
In case $n>0$, given $1\leq i\leq n$ and $0\leq p\leq d$, we clearly have $1\leq q\leq d+n$ for all integers $q$ such that $q-p=i$. In other words, $E_2^{p,q}=0$ whenever $q-p=i$ for $1\leq i\leq n$. Therefore, $\Tor_i^R(M,N^\dagger)=0$ for all $i= 1, \ldots, n$. This concludes the proof of $(ii)$.

Finally, we show $(iii)$. Since $N$ is maximal Cohen-Macaulay, we can consider the spectral sequence $$E_2^{p,q}=\Ext^p_R(\Tor^R_q(M,N^\dagger),\omega_R)\Rightarrow_p\Ext^{p+q}_R(M,N).$$ Along with local duality, the hypotheses force $E_2^{p,q}=0$ for $p>d$ or $1\leq q\leq d+n$. By analyzing the shape of $E_2$ as we did above for items $(i)$ and $(ii)$, we get the desired isomorphisms by convergence, and the exact sequence is exactly the five-term exact sequence of $E_2$.
\end{proof}

\begin{remark}\label{main-Art}\rm $(i)$ Let us point out that Theorem \ref{lmgeneralization}$(i)$ also holds in the situation where $R$ is an Artinian local ring, i.e., the case $d=0$. This follows by an easy inspection of the same spectral sequence, provided that we replace the hypothesis $\Ext^j_R(M,N)=0$ for all $j= 1, \ldots, d$ with the condition $\Ext^j_R(M,N)=0$ for all $j>0$. The outcome is the same isomorphism $M\otimes_RN^\dagger\cong\Hom_R(M,N)^\dagger$. Also note that items $(ii)$ and $(iii)$ of Theorem \ref{lmgeneralization} hold for $d=0$ as well, as soon as $n\geq 1$.

\smallskip

$(ii)$ In the setting of the theorem, the maximal Cohen-Macaulayness of ${\rm Hom}_R(M, N)$ is guaranteed if  $\Ext^j_R(M,N)$ vanishes only for $j=1, \ldots, {\rm max}\{1, d-2\}$; see
Remark \ref{MCMdual}$(i)$. So the key novelty of Theorem \ref{lmgeneralization}$(i)$ is the isomorphism $M\otimes_RN^\dagger\cong\Hom_R(M,N)^\dagger$ (which therefore implies that  $M\otimes_RN^\dagger$ is maximal Cohen-Macaulay). For a version of Theorem \ref{lmgeneralization}$(i)$ if $M$ and $N$ are both maximal Cohen-Macaulay, see \cite[Theorem 5.9]{hujo} for the Gorenstein case (also \cite[Theorem 2.7]{J} for a related result in case $R$ is Cohen-Macaulay but more vanishings are required).
\end{remark}

\begin{definition} \rm Two finite $R$-modules $M$, $N$ are \textit{Tor-independent} if $\Tor^R_i(M,N)=0$ for all $i>0$. \end{definition}

\begin{corollary}\label{CMtensorequivalence}
Let $R$ be a Cohen-Macaulay local ring of dimension $d$ with a canonical module. Let $M$ and $N$ be finite $R$-modules with $N$ maximal Cohen-Macaulay. The following statements hold:
\begin{itemize}
\item [(i)] If $\Ext^j_R(M,N)=0$ for all $j>0$, then $M$ and $N^\dagger$ are Tor-independent;

\item [(ii)] If $M$ and $N^\dagger$ are Tor-independent, then $\Ext^k_R(M,N)=0$ for all $k>d$;

\item [(iii)] If $d>0$ and $\Tor^R_j(M,N^\dagger)=0$ for all $j= 1, \ldots, d$, then $M\otimes_RN^\dagger$ is Cohen-Macaulay of dimension $t$ if and only if $\Ext^i_R(M,N)=0$ for all $i= 0, \ldots, d$ with $i\neq d-t$.
 \end{itemize}
\end{corollary}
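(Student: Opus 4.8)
The plan is to obtain all three parts directly from Theorem \ref{lmgeneralization}, with part (iii) additionally requiring the local duality machinery available over $R$ (see \cite{BH}).

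For (i) I would invoke Theorem \ref{lmgeneralization}(ii). Since $\Ext^j_R(M,N)=0$ for \emph{every} $j>0$, the hypothesis of that item is satisfied for all $n\geq 1$, and its conclusion gives $\Tor_i^R(M,N^\dagger)=0$ for $i=1,\dots,n$. Letting $n$ grow without bound yields $\Tor_i^R(M,N^\dagger)=0$ for all $i>0$, which is exactly Tor-independence. Part (ii) is the symmetric argument via Theorem \ref{lmgeneralization}(iii): Tor-independence means $\Tor_j^R(M,N^\dagger)=0$ for all $j>0$, so its hypothesis holds for every $n\geq 1$ and its conclusion gives $\Ext^k_R(M,N)=0$ for $k=d+1,\dots,d+n$; letting $n\to\infty$ delivers the vanishing of $\Ext^k_R(M,N)$ for all $k>d$.

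The substantive part is (iii), which I would argue as follows. Write $L=M\otimes_R N^\dagger$. The assumption $\Tor_j^R(M,N^\dagger)=0$ for $j=1,\dots,d$ is precisely the hypothesis of Theorem \ref{lmgeneralization}(iii) with $n=0$, so that result supplies isomorphisms $\Ext^i_R(L,\omega_R)\cong\Ext^i_R(M,N)$ for $i=0,\dots,d$. I would then translate the left-hand sides through local duality on the Cohen-Macaulay ring $R$: for any finite module $L$ one has $H^i_{\mathfrak m}(L)^\vee\cong\Ext^{d-i}_R(L,\omega_R)$, whence $H^i_{\mathfrak m}(L)=0$ if and only if $\Ext^{d-i}_R(L,\omega_R)=0$. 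As $L$ is finite, $H^i_{\mathfrak m}(L)=0$ automatically for $i>d$, while the groups $H^0_{\mathfrak m}(L),\dots,H^d_{\mathfrak m}(L)$ are governed, via the displayed isomorphisms, by $\Ext^d_R(M,N),\dots,\Ext^0_R(M,N)$; concretely $H^j_{\mathfrak m}(L)=0$ if and only if $\Ext^{d-j}_R(M,N)=0$. Combining this with the standard local cohomology characterization of the Cohen-Macaulay property — a nonzero finite module $L$ is Cohen-Macaulay of dimension $t$ exactly when $H^j_{\mathfrak m}(L)=0$ for all $j\neq t$, since Grothendieck (non)vanishing forces the single surviving group to sit simultaneously at $\depth L$ and $\dim L$ — and rewriting $H^j_{\mathfrak m}(L)=0$ for $j\neq t$ as $\Ext^{d-j}_R(M,N)=0$ for $j\neq t$, equivalently $\Ext^i_R(M,N)=0$ for all $i=0,\dots,d$ with $i\neq d-t$, yields the asserted equivalence.

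The step I expect to be the main (albeit minor) obstacle is ensuring the two directions of the equivalence fit together without a separate non-vanishing hypothesis. In the backward direction one only knows that the $\Ext$ groups vanish away from $i=d-t$; to conclude that $L$ is Cohen-Macaulay \emph{of dimension exactly $t$} one must observe that the non-vanishing of $H^t_{\mathfrak m}(L)$ is forced for free once $L\neq 0$, because at least one local cohomology group of a nonzero finite module survives and here $H^t_{\mathfrak m}(L)$ is the only candidate. This also isolates the degenerate case $L=0$ (equivalently $M=0$ or $N^\dagger=0$), where all local cohomology vanishes and the equivalence should be read under the usual convention for the zero module.
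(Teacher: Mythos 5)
Your proposal is correct and follows essentially the same route as the paper: parts (i) and (ii) by applying Theorem \ref{lmgeneralization}(ii) and (iii) for all $n\geq 1$, and part (iii) by taking $n=0$ in Theorem \ref{lmgeneralization}(iii) and concluding via local duality. The paper compresses the local duality step into a single sentence, whereas you spell out the translation through $H^i_{\mathfrak m}(L)^\vee\cong\Ext^{d-i}_R(L,\omega_R)$ and the Grothendieck nonvanishing argument; this is exactly the intended argument, just made explicit.
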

\begin{proof}
For $(i)$ (resp. $(ii)$), apply part $(ii)$ (resp. $(iii)$) of Theorem \ref{lmgeneralization} for all $n>0$. To prove $(iii)$, by taking $n=0$ in Theorem \ref{lmgeneralization}$(iii)$ we deduce isomorphisms $$\Ext^i_R(M\otimes N^\dagger,\omega_R)\cong\Ext^i_R(M,N) \quad \mbox{for \,all} \quad i= 0, \ldots, d.$$ The result now follows by local duality.
\end{proof}

\begin{remark}\rm 
Notice that Corollary \ref{CMtensorequivalence}$(iii)$ (which provides a partial converse of Theorem \ref{lmgeneralization}$(i)$) yields the following curious property. If, in the same setting, ${\rm Hom}_R(M, N)\neq 0$ and the tensor product $M\otimes_RN^\dagger$ is Cohen-Macaulay, then in fact it must be maximal Cohen-Macaulay. Another consequence in the same direction is given below.
\end{remark}

\begin{corollary}\label{MCMtensorequivalence}
Let $R$ be a Cohen-Macaulay local ring of dimension $d\geq 1$ with a canonical module. Let $M$ and $N$ be finite $R$-modules with $N$ maximal Cohen-Macaulay. If $\Tor_j^R(M,N^\dagger)=0$ for all $j= 1, \ldots, d$, then the following statements are equivalent:
\begin{itemize}
    \item [(i)] $M\otimes_RN^\dagger$ is maximal Cohen-Macaulay;
    \item [(ii)] $\Ext^i_R(M,N)=0$ for all $i= 1, \ldots, d$.
\end{itemize}
\end{corollary}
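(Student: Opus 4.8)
The plan is to obtain both implications directly from the two results established immediately above, since this statement is essentially the specialization of Corollary \ref{CMtensorequivalence}$(iii)$ to the value $t=d$. The key bookkeeping observation, which I would record first, is that over a local ring $R$ of dimension $d$ a finite module is maximal Cohen-Macaulay precisely when it is Cohen-Macaulay of dimension $d$ (for a Cohen-Macaulay module depth equals dimension, and the dimension of any nonzero finite module is at most $d$). Thus condition $(i)$ is exactly the assertion that $M\otimes_RN^\dagger$ is Cohen-Macaulay of dimension $t=d$, and the whole corollary is the $t=d$ instance of the preceding one.

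For the implication $(ii)\Rightarrow(i)$, I would simply invoke Theorem \ref{lmgeneralization}$(i)$: under the hypothesis $\Ext^i_R(M,N)=0$ for $i=1,\ldots,d$, that theorem already yields that $M\otimes_RN^\dagger$ is maximal Cohen-Macaulay (indeed this direction does not even require the Tor-vanishing hypothesis). Alternatively, and more symmetrically, the same conclusion reads off from Corollary \ref{CMtensorequivalence}$(iii)$ with $t=d$, because the prescribed vanishing is exactly the condition ``$\Ext^i_R(M,N)=0$ for all $i=0,\ldots,d$ with $i\neq d-t$'' when $t=d$.

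For $(i)\Rightarrow(ii)$, I would apply Corollary \ref{CMtensorequivalence}$(iii)$ in the forward direction with $t=d$; this is where the Tor-vanishing hypothesis $\Tor^R_j(M,N^\dagger)=0$ for $j=1,\ldots,d$ is genuinely used. Assuming $M\otimes_RN^\dagger$ maximal Cohen-Macaulay, hence Cohen-Macaulay of dimension $d$, the corollary forces $\Ext^i_R(M,N)=0$ for every $i\in\{0,\ldots,d\}$ with $i\neq d-d=0$, which is precisely statement $(ii)$.

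The only real obstacle is the degenerate case $M\otimes_RN^\dagger=0$, equivalently $\Hom_R(M,N)=0$ in view of the isomorphism $M\otimes_RN^\dagger\cong\Hom_R(M,N)^\dagger$ of Theorem \ref{lmgeneralization}$(i)$, for which the phrase ``maximal Cohen-Macaulay of dimension $d$'' must be read according to the paper's conventions on the zero module; I would dispose of it either by the standing convention that maximal Cohen-Macaulay modules are nonzero, or by observing that in that situation $(i)$ and $(ii)$ remain compatible. Beyond this edge case, no substantial work remains: everything reduces to matching the index $t=d$ in Corollary \ref{CMtensorequivalence}$(iii)$ and translating ``dimension $d$'' into ``maximal''.
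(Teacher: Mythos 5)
Your proposal is correct and follows essentially the same route as the paper: the implication $(ii)\Rightarrow(i)$ is obtained from Theorem \ref{lmgeneralization}$(i)$ (noting it needs no Tor vanishing), and $(i)\Rightarrow(ii)$ is exactly Corollary \ref{CMtensorequivalence}$(iii)$ specialized to $t=d$, with maximal Cohen-Macaulayness translated as Cohen-Macaulay of dimension $d$. The extra remarks on the degenerate case $M\otimes_RN^\dagger=0$ are harmless bookkeeping that the paper leaves implicit.
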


\begin{proof}
Theorem \ref{lmgeneralization}$(i)$ (alternatively, \cite[Lemma 3.4(1)]{LM}) guarantees that $(ii)\Rightarrow(i)$, which holds without the Tor vanishing assumption. Corollary \ref{CMtensorequivalence}$(iii)$ yields the implication $(i)\Rightarrow(ii)$.
\end{proof}

Clearly, by letting $N=\omega_R$ (so that $N^{\dagger}\cong R$), this corollary retrieves the standard fact that $M$ is maximal Cohen-Macaulay if and only if $\Ext^i_R(M, \omega_R)=0$ for all $i=1, \ldots, d$.

We close the section by combining Theorem \ref{lmgeneralization} with the so-called (Auslander's) depth formula, which first appeared in \cite{Au} (in the situation where one of the modules involved has finite projective dimension) and has been widely investigated ever since; see, e.g., \cite{AY}, \cite{BJ}, \cite{CJ},  and \cite{HW}. 

\begin{definition}\rm
Two finite $R$-modules $M$ and $N$ are said to {\it satisfy the depth formula} if
\begin{equation} \label{depthform}\depth_R(M\otimes_RN)=\depth_RM+\depth_RN - \depth R.\end{equation}
\end{definition}

The following corollary is a direct application of Theorem \ref{lmgeneralization}$(i)$. Besides its usefulness in Corollary \ref{depthformula2} below, it will also be applied later in the proof of Proposition \ref{gregulargeneralization}.

\begin{corollary}\label{depthformula}
Let $R$ be a Cohen-Macaulay local ring of dimension $d\geq 1$ with a canonical module. Let $M$ and $N$ be finite $R$-modules such that $N$ is maximal Cohen-Macaulay and $\Ext^j_R(M,N)=0$ for all $j= 1, \ldots, d$. If $M$, $N^\dagger$ satisfy the depth formula $($\ref{depthform}$)$, then $M$ is maximal Cohen-Macaulay.
\end{corollary}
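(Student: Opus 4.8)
The plan is to feed the Ext-vanishing hypothesis directly into Theorem \ref{lmgeneralization}$(i)$ and then read off the conclusion from the depth formula, so that essentially all of the real work is absorbed into that theorem. Since $\Ext^j_R(M,N)=0$ for all $j=1,\dots,d$, Theorem \ref{lmgeneralization}$(i)$ applies and tells us, in particular, that the tensor product $M\otimes_R N^\dagger$ is maximal Cohen-Macaulay. This is the substantive input, and the remainder is a short depth computation.

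Next I would collect the three depth values appearing in the formula. Because $R$ is Cohen-Macaulay of dimension $d$, we have $\depth R=d$. Because $N$ is maximal Cohen-Macaulay over a Cohen-Macaulay local ring with canonical module, its canonical dual $N^\dagger=\Hom_R(N,\omega_R)$ is again maximal Cohen-Macaulay (with $N^{\dagger\dagger}\cong N$), by the standard theory of canonical duals; see \cite{BH}. Hence $\depth_R N^\dagger=d$. Finally, by the previous paragraph $M\otimes_R N^\dagger$ is maximal Cohen-Macaulay, so, provided it is nonzero, $\depth_R(M\otimes_R N^\dagger)=d$. Substituting these three values into the depth formula
$$\depth_R(M\otimes_R N^\dagger)=\depth_R M+\depth_R N^\dagger-\depth R$$
gives $d=\depth_R M+d-d$, that is, $\depth_R M=d$, which says exactly that $M$ is maximal Cohen-Macaulay.

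The only point requiring care — and the closest thing to an obstacle — is the nonvanishing of $M\otimes_R N^\dagger$, needed so that its depth genuinely equals $d$ rather than being infinite. If $M=0$ there is nothing to prove, and if $N\neq 0$ then $N^\dagger\neq 0$ (since $N^{\dagger\dagger}\cong N$); a Nakayama argument passing to the residue field then shows $M\otimes_R N^\dagger\neq 0$ whenever $M,N\neq 0$. Thus the degenerate cases are harmless and the computation above is valid. Beyond invoking Theorem \ref{lmgeneralization}$(i)$ correctly, the main thing to watch is that the depth formula is being applied to the pair $M$, $N^\dagger$ (rather than $M$, $N$), which is precisely why the two copies of $\depth_R N^\dagger=d$ and $\depth R=d$ cancel and isolate $\depth_R M$.
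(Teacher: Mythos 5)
Your proof is correct and follows essentially the same route as the paper, which states this corollary as a direct application of Theorem \ref{lmgeneralization}$(i)$: that theorem gives the maximal Cohen-Macaulayness of $M\otimes_R N^\dagger$, and substituting $\depth R=\depth_R N^\dagger=\depth_R(M\otimes_R N^\dagger)=d$ into the depth formula isolates $\depth_R M=d$. Your extra care about nonvanishing of $M\otimes_R N^\dagger$ (via Nakayama) is a sensible, harmless addition to what the paper leaves implicit.
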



\begin{remark}\rm If $R$ is an {\rm AB} ring in the sense of \cite{hujo} (for example,  a complete intersection, i.e., the quotient of a regular local ring by a proper ideal generated by a regular sequence) and $M$ is a finite $R$-module such that  $\Ext^j_R(M,N)=0$ for all $j>0$ and some finite $R$-module $N$, then it follows from \cite[Theorem 3.6]{CJ} that $M$ is maximal Cohen-Macaulay (the converse is clear). 
\end{remark}



Here, we observe the following criteria for maximal Cohen-Macaulay modules over certain (not necessarily {\rm AB}) Cohen-Macaulay rings. In order to state it we need to invoke a few ingredients. First, for a local ring $R$, the Gorenstein dimension (resp. complete intersection dimension) of a finite $R$-module $M$ is written $\gdim_RM$ (resp. $\cidim_RM$), and recall that $R$ is Gorenstein (resp. a complete intersection) if and only if $\gdim_RM<\infty$ (resp. $\cidim_RM<\infty$) for every finite $R$-module $M$; we refer to \cite{AuB} (resp. \cite{AGP}). We have $\gdim_RM\leq\cidim_RM$, with equality if $\cidim_RM<\infty$; more about $\cidim_RM$ will be considered in Subsection \ref{CIdim0}.  Next, given $R$-modules $M, N$ and $i\in {\mathbb Z}$, the notation $\widehat{{\rm Tor}}^R_i(M, N)$ stands as usual for the associated $i$th Tate homology module (for the definition and properties, see \cite[Remark 7.4]{avra-M}, \cite[Section 2]{CJ0}, and \cite{Iacob}). Finally, recall that a local ring $(R, {\mathfrak m})$ is (at most) an isolated singularity if $R_{\mathfrak p}$ is regular for every prime ${\mathfrak p}\neq {\mathfrak m}$. 


\begin{corollary}\label{depthformula2}
Let $R$ be a Cohen-Macaulay local ring of dimension $d\geq 1$ with a canonical module, and let $M$, $N$ be finite $R$-modules with $N$ maximal Cohen-Macaulay. Suppose any one of the following sets of hypotheses:
\begin{itemize}
    \item [(i)] $R$ is an isolated singularity, $\cidim_RM<\infty$ $($or $\cidim_RN^{\dagger}<\infty$$)$, $\Ext^j_R(M,N)=0$ for all $j=1, \ldots, d$, and ${\rm Tor}^R_i(M, N^{\dagger})=0$ for all $i\gg 0$;
    \item [(ii)] $\gdim_RM<\infty$, $\Ext^j_R(M,N)=0$ for all $j>0$, and $\widehat{{\rm Tor}}^R_i(M, N^{\dagger})=0$ for all $i\in {\mathbb Z}$.
    \end{itemize}
Then, $M$ is maximal Cohen-Macaulay. Moreover, if $(i)$  holds then $\Ext^j_R(M,N)=0$ for all $j>0$.
\end{corollary}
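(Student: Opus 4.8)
The plan is to reduce both $(i)$ and $(ii)$ to a single assertion, namely that $M$ and $N^\dagger$ satisfy the depth formula (\ref{depthform}); granting this, and since the hypothesis $\Ext^j_R(M,N)=0$ for $j=1,\dots,d$ is in force in either case, Corollary \ref{depthformula} instantly gives that $M$ is maximal Cohen-Macaulay. It pays to note at the outset that $N^\dagger$ is maximal Cohen-Macaulay (being the canonical dual of a maximal Cohen-Macaulay module), so $\depth_R N^\dagger=d=\depth R$; the depth formula for the pair $M,N^\dagger$ thus reads $\depth_R(M\otimes_R N^\dagger)=\depth_R M$, while Theorem \ref{lmgeneralization}$(i)$ makes the left-hand side equal to $d$. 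Hence the depth formula is, in this situation, logically equivalent to the conclusion that $M$ is maximal Cohen-Macaulay, and all the content lies in verifying it by means independent of that conclusion; Corollary \ref{depthformula} merely packages the final step.

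Case $(ii)$ is the cleaner of the two: the conditions $\gdim_R M<\infty$ and $\widehat{{\rm Tor}}^R_i(M,N^\dagger)=0$ for all $i\in\mathbb Z$ are precisely those guaranteeing the depth formula through the vanishing of Tate homology, so I would simply invoke that result for modules of finite Gorenstein dimension (see \cite{CJ}, with \cite{CJ0} and \cite{Iacob} for the underlying Tate-homology formalism). This settles $(ii)$ at once.

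For case $(i)$ the plan is to deduce the input of $(ii)$, i.e. to promote the asymptotic vanishing ${\rm Tor}^R_i(M,N^\dagger)=0$ for $i\gg0$ to the two-sided vanishing $\widehat{{\rm Tor}}^R_i(M,N^\dagger)=0$ for every $i$. Here the isolated-singularity hypothesis is what I would lean on: for each prime $\mathfrak p\neq\mathfrak m$ the ring $R_{\mathfrak p}$ is regular, and since $\cidim_R M<\infty$ forces $\gdim_R M<\infty$ (so a complete resolution exists), the localization of $\widehat{{\rm Tor}}^R_i(M,N^\dagger)$ at $\mathfrak p$ is the corresponding Tate homology over the regular local ring $R_{\mathfrak p}$, which vanishes since every finite $R_{\mathfrak p}$-module has finite projective dimension; consequently each $\widehat{{\rm Tor}}^R_i(M,N^\dagger)$ has finite length. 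As $\widehat{{\rm Tor}}^R_i\cong{\rm Tor}^R_i$ for $i\gg0$, these finite-length modules already vanish for $i\gg0$. The decisive step, which I expect to be the main obstacle, is to upgrade this to vanishing in all degrees; I would extract it from the rigidity of stable homology under finite complete-intersection dimension --- governed by the cohomology operators and the associated support varieties in the spirit of Avramov--Buchweitz --- this being exactly where the two hypotheses (isolated singularity, finite CI-dimension) must be used in tandem. The variant $\cidim_R N^\dagger<\infty$ is symmetric, as ${\rm Tor}$ is. Once the two-sided Tate vanishing is in hand, case $(ii)$ delivers the depth formula, and hence that $M$ is maximal Cohen-Macaulay.

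For the final ``moreover'' under $(i)$, I would now use that $M$ is maximal Cohen-Macaulay: over the isolated singularity $R$ this makes $M$ (and $N^\dagger$) free on the punctured spectrum, so ${\rm Tor}^R_i(M,N^\dagger)$ has finite length for all $i\geq1$. With finite length in hand, the rigidity of ${\rm Tor}$ for modules of finite complete-intersection dimension propagates the asymptotic vanishing ${\rm Tor}^R_i(M,N^\dagger)=0$ ($i\gg0$) down to all $i>0$, so that $M$ and $N^\dagger$ become Tor-independent. Corollary \ref{CMtensorequivalence}$(ii)$ then yields $\Ext^k_R(M,N)=0$ for all $k>d$, which, combined with the hypothesis for $j=1,\dots,d$, gives $\Ext^j_R(M,N)=0$ for all $j>0$, completing the proof.
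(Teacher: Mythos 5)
Your overall skeleton (establish the depth formula for $M$ and $N^\dagger$, then quote Corollary \ref{depthformula}) is the same as the paper's, but each of your key steps has a genuine gap, and all of them trace back to a single omission: you never produce the Tor-independence of $M$ and $N^\dagger$. In case (ii), the theorem of \cite{CJ} you invoke does not say what you claim: finite G-dimension together with $\widehat{{\rm Tor}}^R_i(M,N^\dagger)=0$ for all $i\in\mathbb{Z}$ yields only the \emph{derived} depth formula, i.e.\ the formula for $\depth_R(M\otimes^{\mathbf{L}}_RN^\dagger)$, which specializes to the usual depth formula only once one knows $\Tor^R_i(M,N^\dagger)=0$ for all $i>0$. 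The implication you assert is false in general: if $x$ is a regular element on a local ring of depth at least $2$ and both modules are taken to be $R/(x)$, then one of them has finite projective dimension (so all Tate homology vanishes), yet the usual depth formula fails. The paper closes exactly this hole by first applying Corollary \ref{CMtensorequivalence}(i) --- which consumes the hypothesis $\Ext^j_R(M,N)=0$ for \emph{all} $j>0$, a hypothesis you never use --- to get Tor-independence, and only then passing from the derived to the usual depth formula. In case (ii) this is a one-line repair, but it shows your reading of \cite{CJ} cannot be the engine of the proof.

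The same issue defeats your reduction of (i) to (ii): case (i) assumes $\Ext^j_R(M,N)=0$ only for $j\le d$, so even granting your promotion of $\Tor^R_i(M,N^\dagger)=0$ ($i\gg0$) to full Tate vanishing, you would still obtain only the derived depth formula and would still lack Tor-independence. Moreover, that promotion --- which you yourself flag as ``the main obstacle'' --- is left unproved, and it is not an off-the-shelf support-variety fact; the downward-rigidity results available in this setting require a depth condition on the tensor product, which is precisely the ingredient the paper feeds in and you never use. The paper's actual route is: Theorem \ref{lmgeneralization}(i) (via Corollary \ref{MCMtensorequivalence}) shows $M\otimes_RN^\dagger$ is maximal Cohen-Macaulay, hence torsionfree; then Araya--Yoshino's rigidity theorem \cite[Theorem 2.10]{AY} (isolated singularity, finite CI-dimension, $\Tor^R_i=0$ for $i\gg0$, torsionfree tensor product) yields Tor-independence; then \cite[Theorem 2.5]{AY} yields the depth formula, and Corollary \ref{depthformula} concludes. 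Your ``moreover'' step suffers the same defect: the principle ``finite CI-dimension, finite-length Tors and $\Tor^R_i=0$ for $i\gg0$ imply $\Tor^R_i=0$ for all $i>0$'' is false as stated (take both modules to be the residue field over a regular local ring of positive dimension), so it cannot be quoted as ``rigidity''; any correct version needs the maximal Cohen-Macaulay/torsionfree input and a nontrivial theorem behind it. In the paper the ``moreover'' is immediate, because Tor-independence was already established in the main argument, whence Corollary \ref{CMtensorequivalence}(ii) applies directly.
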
 
\begin{proof} Suppose $(i)$ holds. Since the module $\Ext^j_R(M,N)$ vanishes for all $j=1, \ldots, d$, we get that $M\otimes_RN^\dagger$ is maximal Cohen-Macaulay (hence torsionfree) by the implication $(ii)\Rightarrow(i)$ of Corollary \ref{MCMtensorequivalence}. Also note $N^{\dagger}$ is maximal Cohen-Macaulay. Now we are in a position to apply \cite[Theorem 2.10]{AY}, which ensures that 
$M$, $N^\dagger$ are Tor-independent. By \cite[Theorem 2.5]{AY}, the modules $M$, $N^\dagger$ satisfy the depth formula $($\ref{depthform}$)$. Therefore, $M$ is maximal Cohen-Macaulay by Corollary \ref{depthformula}.
Notice that Corollary \ref{CMtensorequivalence}$(ii)$ implies $\Ext^k_R(M,N)=0$ for all $k>d$. It follows that $\Ext^j_R(M,N)=0$ for all $j>0$.

Finally, let us assume $(ii)$. Using \cite[Theorem 2.3]{CJ} we obtain that the so-called {\it derived depth formula} (stated more generally for complexes) holds for the modules $M$ and $N^{\dagger}$. On the other hand, by Corollary \ref{CMtensorequivalence}$(i)$, these modules  are Tor-independent. Now, as explained in \cite[Introduction, p.\,465]{CJ}, such properties validate the usual depth formula for $M$ and $N^{\dagger}$.  By Corollary \ref{depthformula} once again, we get that $M$ is maximal Cohen-Macaulay.\end{proof}






\begin{question}\label{ques-MCM}\rm Let $R$ be a complete intersection local ring of dimension $d>0$, and let $M$ be a finite $R$-module. Suppose $\Ext^j_R(M,N)=0$ for all $j=1, \ldots, d$ and some maximal Cohen-Macaulay $R$-module $N$. Is it true that $M$ must be maximal Cohen-Macaulay? If not, what if in addition $R$ is an isolated singularity?
\end{question}


\begin{remark}\rm $(i)$ It is clear that the key issue behind Question \ref{ques-MCM} is whether, at least for complete intersections, Corollary \ref{CMtensorequivalence}$(i)$ remains valid if $\Ext^j_R(M,N)$ is required to vanish only for $j=1, \ldots, d$. 




\smallskip

$(ii)$ By the well-known Ischebeck's theorem (see \cite[Exercise 3.1.24]{BH}), Question \ref{ques-MCM} is easily seen to admit an affirmative answer if $R$ is only required to be Cohen-Macaulay and ${\rm id}_RN<\infty$.



\end{remark}

A closely related question can also be raised concerning Corollary \ref{depthformula2}$(ii)$, as follows.

\begin{question}\label{ques-Gor-MCM}\rm Let $R$ be a Gorenstein local ring of dimension $d\geq 1$ and let $M$, $N$ be finite $R$-modules, with $N$ maximal Cohen-Macaulay. Suppose $\Ext^j_R(M,N)=0$ for all $j=1, \ldots, d$ and $\widehat{{\rm Tor}}^R_i(M, N^{*})=0$ for all $i\in {\mathbb Z}$. Is it true that $M$ must be maximal Cohen-Macaulay?
\end{question}


\section{Freeness criteria over Cohen-Macaulay local rings }\label{freenesssec}

In this part we present several criteria for the freeness of finite modules over Cohen-Macaulay local rings, in connection to a variety of tools. For such reasons, and in order to make the section more organized, we have divided it into three subsections.

\subsection{Test modules and G-regular rings}\label{testmodulessec} We begin by recalling a central concept in this subsection.

\begin{definition}\rm (\cite[Definition 1.1]{CDT}) A finite $R$-module $M$ is said to be a \emph{test} module if all finite $R$-modules $N$ with $\Tor_i^R(M,N)=0$ for all $i\gg0$ satisfy $\pd_RN<\infty$.
\end{definition}


\begin{lemma}$($\cite[Proposition 3.6]{CDT}$)$\label{test-ep}
Let $R$ be a Cohen-Macaulay local ring with  canonical module. For each maximal Cohen-Macaulay $R$-module $M$, the following statements are equivalent:
\begin{itemize}
    \item [(i)] $M$ is a test module;
    \item [(ii)] All finite $R$-modules $N$ with $\Ext^j_R(N,M^\dagger)=0$ for all $j\gg 0$ satisfy $\pd_RN<\infty$.
    

\end{itemize}
\end{lemma}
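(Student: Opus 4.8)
The plan is to reduce the asserted equivalence to a single asymptotic ``dictionary'': for the maximal Cohen-Macaulay module $M$ and an arbitrary finite $R$-module $N$,
\begin{equation*}
\Tor_i^R(M,N)=0 \ (i\gg 0) \quad\Longleftrightarrow\quad \Ext_R^j(N,M^\dagger)=0 \ (j\gg 0). \tag{$\star$}
\end{equation*}
Granting $(\star)$, the class of finite modules $N$ satisfying the hypothesis in $(i)$ (those with $\Tor_i^R(M,N)=0$ for $i\gg 0$) coincides with the class satisfying the hypothesis in $(ii)$ (those with $\Ext_R^j(N,M^\dagger)=0$ for $j\gg 0$); since in both items the conclusion is the same, namely $\pd_R N<\infty$, the equivalence $(i)\Leftrightarrow(ii)$ is immediate. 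Throughout I will use the standard facts that, $M$ being maximal Cohen-Macaulay, its canonical dual $M^\dagger$ is again maximal Cohen-Macaulay and the biduality map $M\to M^{\dagger\dagger}$ is an isomorphism (see \cite{BH}), so that $(M^\dagger)^\dagger\cong M$.

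For the implication ``$\Ext$ vanishes $\Rightarrow$ $\Tor$ vanishes'' in $(\star)$, I would simply feed the pair $(N,M^\dagger)$ into the spectral sequence built in the proof of Theorem \ref{lmgeneralization}. Since $M^\dagger$ is maximal Cohen-Macaulay, that construction yields a spectral sequence $E_2^{p,q}=\Ext_R^p(\Ext_R^q(N,M^\dagger),\omega_R)\Rightarrow \Tor_{q-p}^R(N,(M^\dagger)^\dagger)\cong\Tor_{q-p}^R(N,M)$, with $E_2^{p,q}=0$ for $p>d$ by local duality. If $\Ext_R^q(N,M^\dagger)=0$ for all $q\geq t$, then the nonzero entries of $E_2$ are confined to the band $0\leq p\leq d$, $0\leq q\leq t-1$; every antidiagonal $q-p=i$ with $i\geq t$ meets only points having $q=p+i\geq t$, hence vanishes identically on the second page (and therefore on $E_\infty$). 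Convergence then forces $\Tor_i^R(M,N)=0$ for all $i\geq t$.

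For the reverse implication I would set up the companion spectral sequence coming from the double complex $\Hom_R(F_\bullet\otimes_R M, E^\bullet)$, where $F_\bullet$ is a free resolution of $N$ and $E^\bullet$ an injective resolution of $\omega_R$. Filtering one way and using that $M$ is maximal Cohen-Macaulay (so that $\Ext_R^{>0}(M,\omega_R)=0$ and $\Hom_R(F_q\otimes_R M,\omega_R)\cong\Hom_R(F_q,M^\dagger)$) collapses one page and identifies the abutment with $\Ext_R^{\bullet}(N,M^\dagger)$; filtering the other way produces $E_2^{p,q}=\Ext_R^p(\Tor_q^R(M,N),\omega_R)\Rightarrow \Ext_R^{p+q}(N,M^\dagger)$. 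Again $E_2^{p,q}=0$ for $p>d$, so if $\Tor_q^R(M,N)=0$ for all $q\geq s$ the surviving entries lie in $0\leq p\leq d$, $0\leq q\leq s-1$, whence $\Ext_R^j(N,M^\dagger)=0$ for $j=p+q\geq d+s$. This proves $(\star)$ and hence the lemma.

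I expect the main obstacle to be essentially bookkeeping: verifying the collapse of the two spectral sequences via the maximal Cohen-Macaulayness of $M$ and $M^\dagger$, pinning down the correct total-degree conventions (so that the abutments are indexed by $p+q$ in one case and $q-p$ in the other), and---most importantly---checking that the local-duality bound $p\leq d$ together with the respective vanishing ranges really confines all nonzero contributions to a bounded diagonal band, which is exactly what upgrades a finite vanishing range into the asymptotic statement $(\star)$.
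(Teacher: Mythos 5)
Your proposal is correct, but it is not comparable to a proof in the paper, because the paper gives none: Lemma \ref{test-ep} is quoted verbatim from \cite[Proposition 3.6]{CDT} and used as a black box. What you have done is make the lemma self-contained by deriving it from the paper's own technical engine. Your reduction to the dictionary $(\star)$ is logically sound (both items conclude $\pd_RN<\infty$, so it suffices to show the two hypothesis classes coincide), and both directions of $(\star)$ are established by exactly the two spectral sequences constructed in the proof of Theorem \ref{lmgeneralization}, applied to the pair $(N,M^\dagger)$ with $M^\dagger$ maximal Cohen-Macaulay and $(M^\dagger)^\dagger\cong M$: the sequence $\Ext^p_R(\Ext^q_R(N,M^\dagger),\omega_R)\Rightarrow\Tor^R_{q-p}(N,M)$ for one direction, and $\Ext^p_R(\Tor^R_q(N,M),\omega_R)\Rightarrow\Ext^{p+q}_R(N,M^\dagger)$ for the other. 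The one point where care is needed --- and you handle it correctly --- is that Theorem \ref{lmgeneralization} as stated assumes vanishing beginning in degree $1$, whereas the lemma only supplies asymptotic vanishing; so you cannot quote the theorem itself but must re-run its spectral sequence argument with the hypotheses confined to high degrees, which works because the band $0\leq p\leq d$ (local duality) together with vanishing of the relevant $E_2$-entries in high $q$ kills every antidiagonal $q-p=i$, $i\geq t$, respectively every total degree $p+q\geq d+s$. The trade-off between the two routes: the paper's citation is economical and leans on \cite{CDT}, whose argument is independent of the paper's machinery; your route costs a page of bookkeeping but shows the lemma is really a corollary of Theorem \ref{lmgeneralization}'s construction, and it yields slightly more than stated, namely explicit degree bounds (if $\Ext^j_R(N,M^\dagger)=0$ for $j\geq t$ then $\Tor^R_i(M,N)=0$ for $i\geq t$, and if $\Tor^R_i(M,N)=0$ for $i\geq s$ then $\Ext^j_R(N,M^\dagger)=0$ for $j\geq d+s$).
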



Next, a remarkable property of the Gorenstein dimension is that $\gdim_RM\leq\pd_RM$ for any finite $R$-module $M$, with equality if $\pd_RM<\infty$ (see \cite{AuB}). Based on this fact, the following notion appears naturally.

\begin{definition}\rm \label{G-reg-def} (\cite{T}) The local ring $R$ is said to be \emph{$G$-regular} if $\gdim_RM=\pd_RM$ for any finite $R$-module $M$.
\end{definition}

In fact the definition was originally given in an apparently different way: $R$ is G-regular if and only if every totally reflexive $R$-module (i.e., a reflexive finite $R$-module $M$ with ${\rm Ext}_R^i(M, R)={\rm Ext}_R^i(M^*, R)=0$ for all $i>0$, or equivalently, $\gdim_RM=0$) is necessarily free. But according to \cite[Proposition 1.8(2)]{T} this is equivalent to Definition \ref{G-reg-def}. Note that regular local rings are trivial examples of G-regular rings. It is also well-known that any Golod local ring that is not a hypersurface (e.g., a Cohen-Macaulay non-Gorenstein local ring with minimal multiplicity) is G-regular.

Before proving our main result in this subsection, we invoke a helpful fact which reconnects us to the aforementioned depth formula.

\begin{lemma}\label{pddepthformula}$($\cite[Theorem 1.2]{Au}$)$
Let $R$ be a local ring. If $M$ and $N$ are Tor-independent $R$-modules with $\pd_RM<\infty$, then $M$ and $N$ satisfy the depth formula $($\ref{depthform}$)$.
\end{lemma}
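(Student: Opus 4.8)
The plan is to prove the depth formula \eqref{depthform} by induction on $p=\pd_R M$, relying on two standard tools from \cite{BH}: the Auslander--Buchsbaum formula $\pd_R M+\depth_R M=\depth R$ (available since $\pd_R M<\infty$) and the depth lemma for short exact sequences. In the base case $p=0$ the module $M$ is free, say $M\cong R^{\oplus r}$, so $M\otimes_R N\cong N^{\oplus r}$ and $\depth_R M=\depth R$; the formula then reduces to $\depth_R N=\depth_R N$.

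For the inductive step I would take a minimal free cover and write a short exact sequence $0\to K\to F\to M\to 0$ with $F$ free and $\pd_R K=p-1$. First, Tor-independence passes to $K$: the long exact sequence in $\Tor$ together with $\Tor_i^R(F,N)=0$ for $i>0$ forces $\Tor_i^R(K,N)=0$ for all $i>0$. Since $\Tor_1^R(M,N)=0$, tensoring with $N$ keeps the sequence exact, yielding $0\to K\otimes_R N\to F\otimes_R N\to M\otimes_R N\to 0$. Applying the inductive hypothesis to $(K,N)$ and Auslander--Buchsbaum to $K$ gives $\depth_R(K\otimes_R N)=\depth_R N-(p-1)$, which I would compare with $\depth_R(F\otimes_R N)=\depth_R N$. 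When $p\geq 2$ the former is strictly smaller, so the depth lemma pins down $\depth_R(M\otimes_R N)=\depth_R(K\otimes_R N)-1=\depth_R N-p$, and this equals $\depth_R M+\depth_R N-\depth R$ by Auslander--Buchsbaum applied to $M$.

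The main obstacle is the boundary case $p=1$: here $K$ is free, so $\depth_R(K\otimes_R N)=\depth_R N=\depth_R(F\otimes_R N)$, and the depth lemma only delivers the inequality $\depth_R(M\otimes_R N)\geq \depth_R N-1$. The reverse inequality must be squeezed out of the minimality of the resolution. Writing $n=\depth_R N$ and letting $\phi\colon N^{b_1}\to N^{b_0}$ denote the map induced by the minimal presentation matrix of $M$ (whose entries lie in $\mathfrak{m}$), the long exact sequence in local cohomology identifies $H^{n-1}_{\mathfrak{m}}(M\otimes_R N)$ with $\ker\bigl(\phi_*\colon H^{n}_{\mathfrak{m}}(N)^{b_1}\to H^{n}_{\mathfrak{m}}(N)^{b_0}\bigr)$. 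Now $H^{n}_{\mathfrak{m}}(N)$ is a nonzero Artinian module, hence has nonzero socle, and since $\phi$ has entries in $\mathfrak{m}$ the induced map $\phi_*$ kills every socle element; therefore $\ker\phi_*\neq 0$, forcing $\depth_R(M\otimes_R N)=n-1$ and completing the induction. (Should one prefer, passing to the completion --- which alters neither side of \eqref{depthform} --- allows the last step to be phrased via Matlis duality and Nakayama's lemma rather than the socle.)
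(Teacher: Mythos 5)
Your proof is correct, but be aware that the paper itself offers no proof of this lemma at all: it is quoted as \cite[Theorem 1.2]{Au}, so the only honest comparison is with Auslander's original argument, and his Theorem 1.2 is in fact a stronger statement (it computes depth in terms of the largest non-vanishing $\Tor_q^R(M,N)$ under the hypothesis that $q=0$ or $\depth_R\Tor_q^R(M,N)\leq 1$; the lemma quoted here is the case $q=0$). Your route --- induction on $p=\pd_RM$, peeling off a minimal free cover, Auslander--Buchsbaum plus the depth lemma for $p\geq 2$, and the socle argument on $H^{n}_{\mathfrak m}(N)$ at the boundary case $p=1$ --- is a legitimate, self-contained proof of exactly the statement the paper needs, and the socle step is the genuine content: it is precisely where minimality of the presentation matrix converts the inequality $\depth_R(M\otimes_RN)\geq n-1$ into an equality (one can even drop the appeal to Artinianness, since any nonzero $\mathfrak m$-torsion module has nonzero socle). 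One elision worth repairing: your identification $H^{n-1}_{\mathfrak m}(M\otimes_RN)\cong\ker\phi_*$ tacitly assumes $n\geq 1$; if $\depth_RN=0$ the conclusion $\depth_R(M\otimes_RN)=n-1$ is meaningless. But the same socle argument, applied to $N$ itself rather than to $H^{n}_{\mathfrak m}(N)$, shows that this case cannot occur: the socle of $N$ is nonzero and is annihilated by the presentation matrix (entries in $\mathfrak m$), contradicting the injectivity of $\phi\otimes_RN$ that follows from $\Tor_1^R(M,N)=0$. With that one sentence added, the induction closes. In short: what your approach buys is an elementary, complete argument for the lemma as stated; what the paper's citation buys is Auslander's more general theorem, which your method does not recover.
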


\begin{proposition}\label{gregulargeneralization}
Suppose $R$ is a Cohen-Macaulay local ring with a canonical module, and let $M$ and $N$ be finite $R$-modules with $N$ maximal Cohen-Macaulay. Assume that the following assertions hold:
\begin{itemize}
    \item [(i)] $\Ext^j_R(M,N)=0$ for all $j>0$;
    \item [(ii)] $N^\dagger$ is a test module.
\end{itemize}
Then, $M$ is free. In particular, if $M=N$ then $R$ is $G$-regular.
\end{proposition}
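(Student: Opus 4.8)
The plan is to exploit the two hypotheses in tandem: condition $(i)$ feeds into Theorem \ref{lmgeneralization} to produce Tor-vanishing and maximal Cohen-Macaulayness, while condition $(ii)$, via the test module machinery of Lemma \ref{test-ep}, forces finite projective dimension, and finiteness of projective dimension for a maximal Cohen-Macaulay module over a Cohen-Macaulay ring is exactly freeness. First I would note that we may assume $\dim R = d \geq 1$, treating the Artinian case separately (or observing that over an Artinian local ring a test module together with the Ext-vanishing hypothesis already trivializes). Applying Corollary \ref{CMtensorequivalence}$(i)$ to hypothesis $(i)$, I conclude that $M$ and $N^\dagger$ are Tor-independent, i.e., $\Tor_i^R(M,N^\dagger)=0$ for all $i>0$, and in particular for all $i \gg 0$.

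Next I would invoke the test module property. Since $N^\dagger$ is a test module and $\Tor_i^R(M,N^\dagger)=0$ for all $i \gg 0$ (indeed for all $i>0$), the very definition of test module yields $\pd_R M < \infty$. At this point the key step is to upgrade finite projective dimension to freeness. The natural route is to show that $M$ is maximal Cohen-Macaulay, for then the Auslander-Buchsbaum formula gives $\pd_R M = \depth R - \depth_R M = d - d = 0$, whence $M$ is free. To establish maximal Cohen-Macaulayness I would use Corollary \ref{depthformula}: hypothesis $(i)$ already supplies $\Ext^j_R(M,N)=0$ for $j=1,\ldots,d$, so it remains only to verify that $M$ and $N^\dagger$ satisfy the depth formula \eqref{depthform}. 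But this follows from Lemma \ref{pddepthformula}, since $M$ and $N^\dagger$ are Tor-independent and $\pd_R M < \infty$. Thus Corollary \ref{depthformula} applies and $M$ is maximal Cohen-Macaulay, so $\pd_R M = 0$ and $M$ is free.

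I expect the main obstacle to be the bookkeeping around the boundary case $d=0$ and making sure the chain Theorem~$\to$ Corollary \ref{CMtensorequivalence}$(i)$ $\to$ test module $\to$ Auslander-Buchsbaum is logically airtight; in particular one must confirm that $N^\dagger$ is indeed maximal Cohen-Macaulay (so that Lemma \ref{test-ep} is even applicable to it as a test module) — this is standard, as the canonical dual of a maximal Cohen-Macaulay module is again maximal Cohen-Macaulay over a Cohen-Macaulay ring with canonical module. For the final clause, I would specialize to $M=N$: then $N$ itself is maximal Cohen-Macaulay with $\Ext^j_R(N,N)=0$ for all $j>0$ and $N^\dagger$ a test module, and the freeness conclusion applies to $M=N$. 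To see that $R$ is $G$-regular, I would appeal to the characterization recalled after Definition \ref{G-reg-def}: it suffices to show every totally reflexive module is free. Given the hypothesis structure, the argument specializes to show that the relevant totally reflexive modules acquire finite projective dimension hence are free, matching the equivalent definition $\gdim_R M = \pd_R M$; here the delicate point is ensuring the test module hypothesis on $N^\dagger = N^\dagger$ propagates correctly to the arbitrary totally reflexive module one wishes to kill, which I would handle by the freeness criterion just proved applied with the totally reflexive module in the role of $M$.
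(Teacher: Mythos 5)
Your proof of the freeness assertion is correct and follows essentially the same chain as the paper: Theorem \ref{lmgeneralization} (packaged via Corollary \ref{CMtensorequivalence}$(i)$) gives Tor-independence of $M$ and $N^\dagger$, the definition of test module then gives $\pd_R M<\infty$, Lemma \ref{pddepthformula} yields the depth formula, and Corollary \ref{depthformula} upgrades this to maximal Cohen-Macaulayness, hence freeness by Auslander--Buchsbaum. Your caution about $d=0$ is reasonable but easily dispatched: Tor-independence and the test property still give $\pd_R M<\infty$ there, and over an Artinian ring Auslander--Buchsbaum then yields freeness directly, with no need for Corollary \ref{depthformula}; note the paper's own proof glosses over this point too. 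Where you genuinely diverge is the $G$-regularity claim. The paper takes an arbitrary module $N'$ with $\gdim_R N'<\infty$, observes $\Ext^j_R(N',R)=0$ for $j\gg0$, hence $\Ext^j_R(N',M)=0$ for $j\gg0$ since $M$ is free, and invokes Lemma \ref{test-ep} (with test module $M^\dagger$ and $(M^\dagger)^\dagger\cong M$) to conclude $\pd_R N'<\infty$, i.e., $\gdim_RN'=\pd_RN'$ for every finite module. You instead apply the freeness criterion just proved to each totally reflexive module $T$: taking $T$ in the role of $M$ and the now-free module $M=N$ in the role of $N$, hypothesis $(i)$ holds because $\Ext^j_R(T,R)=0$ for all $j>0$ and $M$ is free, hypothesis $(ii)$ is unchanged, so $T$ is free, and Takahashi's characterization (recalled after Definition \ref{G-reg-def}) then gives $G$-regularity. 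Both routes are valid; yours avoids Lemma \ref{test-ep} entirely at the cost of quoting the equivalence between the two formulations of $G$-regularity, while the paper's route establishes $\gdim=\pd$ directly. One small slip in your write-up: in the opening plan you attribute the step ``test module forces $\pd_RM<\infty$'' to Lemma \ref{test-ep}, but what you actually (and correctly) use there is the Tor-theoretic definition of a test module; Lemma \ref{test-ep} concerns Ext vanishing and enters only in the paper's version of the $G$-regularity argument.
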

\begin{proof}
By Theorem \ref{lmgeneralization}$(i)$,$(ii)$, we get that $M\otimes_R N^\dagger$ is maximal Cohen-Macaulay and $\Tor^R_i(M,N^\dagger)=0$ for all $i>0$. Since $N^\dagger$ is a test module, we must have $\pd_RM<\infty$. Thus, Lemma \ref{pddepthformula} yields that $M$ and $N^\dagger$ satisfy the depth formula $($\ref{depthform}$)$, and consequently Corollary \ref{depthformula} ensures that $M$ is maximal Cohen-Macaulay, whence free.

Now, let $N'$ be a finite $R$-module with $\gdim_RN'<\infty$. 
Thus, as any totally reflexive $R$-module $T$
satisfies $\Ext^k_R(T,R)=0$ for all $k>0$, we must have $\Ext^j_R(N',R)=0$ whenever $j\gg0$. Because $M$ is free, we obtain $\Ext^j_R(N',M)=0$ for all $j\gg 0$. Since $({M^\dagger})^{\dagger}\cong M$ and by assumption $M=N$, we can apply Lemma \ref{test-ep} in order to get $\pd_RN'<\infty$. Therefore, $\gdim_RN'=\pd_RN'$. This shows that $R$ is G-regular in the case $M=N$.
\end{proof}

As a byproduct, by taking $M=N=R$ in our Proposition \ref{gregulargeneralization}, we immediately recover the following interesting result from \cite{CDT}.

\begin{corollary} {\rm (}\cite[Corollary 3.7]{CDT}{\rm )}
Let $R$ be a Cohen-Macaulay local ring with canonical module $\omega_R$. If $\omega_R$ is a test module, then $R$ is $G$-regular. 
\end{corollary}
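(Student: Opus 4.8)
The plan is to derive the corollary directly from Proposition \ref{gregulargeneralization} by specializing the two modules $M$ and $N$ to the canonical module $\omega_R$ itself. First I would set $M = N = \omega_R$ and verify that the two hypotheses of the proposition are met. Hypothesis (ii) is immediate: it asks that $N^\dagger$ be a test module, and here $N^\dagger = \omega_R^\dagger = \Hom_R(\omega_R, \omega_R) \cong R$, which is a test module precisely by the assumption of the corollary that $\omega_R$ is a test module --- or rather, I would use the standard identification and the hypothesis directly. So I need to be careful: the corollary assumes $\omega_R$ is a test module, but the proposition wants $N^\dagger$ to be the test module. The cleanest route is instead to take $N = \omega_R$ so that $N^\dagger \cong R$, but then $R$ being a test module is automatic and does not use the hypothesis at all; that cannot be right. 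The correct specialization, matching the proposition's structure, is to take $N$ maximal Cohen-Macaulay with $N^\dagger \cong \omega_R$, i.e. $N \cong R$, so that $N^\dagger = R^\dagger = \omega_R$ is the test module --- and simultaneously $M = N = R$ to land in the G-regular conclusion.

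Let me reorganize accordingly. The key observation is that $R$ itself is maximal Cohen-Macaulay (since $R$ is Cohen-Macaulay), and $R^\dagger = \Hom_R(R, \omega_R) \cong \omega_R$. Thus I would apply Proposition \ref{gregulargeneralization} with $M = N = R$. Hypothesis (i) requires $\Ext^j_R(R, R) = 0$ for all $j > 0$, which holds trivially because $R$ is free over itself. Hypothesis (ii) requires $N^\dagger = R^\dagger \cong \omega_R$ to be a test module, which is exactly the hypothesis of the corollary. The proposition then yields that $R$ is $G$-regular, since the "in particular" clause applies when $M = N$. This is the entire content of the argument.

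I expect essentially no technical obstacle here, since the corollary is a direct specialization; the only point demanding care is the bookkeeping of which module plays which role, so that the test-module hypothesis on $\omega_R$ lines up with the condition "$N^\dagger$ is a test module" rather than being wasted on the trivial module. Concretely, I would write: take $M = N = R$, note $R$ is maximal Cohen-Macaulay and $R^\dagger \cong \omega_R$, observe that $\Ext^j_R(R,R)=0$ for $j>0$ and that $\omega_R$ being a test module gives condition (ii); then invoke the "in particular" part of Proposition \ref{gregulargeneralization} to conclude $R$ is $G$-regular. This is precisely how the surrounding text frames it ("by taking $M=N=R$"), so the proof is a single short paragraph.
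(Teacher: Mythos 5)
Your final argument is exactly the paper's: the text preceding the corollary states it follows "by taking $M=N=R$ in Proposition \ref{gregulargeneralization}," and your verification that $R$ is maximal Cohen-Macaulay, that $R^\dagger\cong\omega_R$ supplies the test-module hypothesis, and that $\Ext^j_R(R,R)=0$ for $j>0$ is trivial is precisely the intended (and correct) specialization. The initial detour through $M=N=\omega_R$ is harmless since you catch and fix the bookkeeping yourself.
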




\subsection{Complete intersection dimension zero}\label{CIdim0} Recall that $\gdim_RM\leq\cidim_RM\leq \pd_RM$, with equalities whenever $\pd_RM<\infty$, and \begin{equation}\label{CI-dim-form}\gdim_RM=\cidim_RM={\rm depth}\,R - {\rm depth}\,M\end{equation} if $\cidim_RM<\infty$.  See \cite{AGP} for details.


\begin{lemma}$($\cite[Theorem 4.2]{AB}$)$\label{cidim}
Let $R$ be a local ring and $M$ be a finite $R$-module with $\cidim_RM<\infty$. Then, $\pd_RM<\infty$ if and only if $\Ext^e_R(M,M)=0$ for some even integer $e\geq2$.
\end{lemma}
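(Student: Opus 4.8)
The forward direction is immediate and needs no hypothesis on $\cidim_R M$: if $\pd_R M = p < \infty$ then $\Ext^j_R(M,M)=0$ for all $j>p$, so any even integer $e > \max\{p,1\}$ works. The content lies entirely in the converse, which I would prove by contraposition through the cohomology-operator/support-variety machinery for modules of finite complete intersection dimension. The first move is a reduction. Since $\cidim_R M < \infty$, by definition there is a quasi-deformation $R \to R' \leftarrow Q$, with $R \to R'$ flat local and $R' = Q/(\underline x)$ for a $Q$-regular sequence $\underline x = x_1,\ldots,x_c$ in the maximal ideal, such that $\pd_Q M' < \infty$, where $M' = M\otimes_R R'$. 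As $R\to R'$ is faithfully flat and $M$ is finite, flat base change gives $\Ext^e_R(M,M)\otimes_R R' \cong \Ext^e_{R'}(M',M')$ and $\pd_R M = \pd_{R'}M'$; hence both the hypothesis $\Ext^e=0$ and the conclusion $\pd<\infty$ transfer faithfully between $R$ and $R'$. I may therefore replace $R$ by $R'$ and assume that $R = Q/(\underline x)$ is a deformation of codimension $c$ with $\pd_Q M < \infty$.

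In this setting the Eisenbud--Gulliksen cohomology operators $\chi_1,\ldots,\chi_c$, of cohomological degree $2$, act centrally on the graded $R$-algebra $\Ext^*_R(M,M) = \bigoplus_{i\geq 0}\Ext^i_R(M,M)$, and by Gulliksen's finiteness theorem --- this is where $\pd_Q M < \infty$ enters --- this algebra is a finitely generated graded module over the polynomial ring $\mathcal S = R[\chi_1,\ldots,\chi_c]$ with $\deg\chi_i = 2$. Writing $\chi_i^M \in \Ext^2_R(M,M)$ for the image of $\mathrm{id}_M$ under $\chi_i$, the action of $\chi_i$ is, up to sign, Yoneda multiplication by $\chi_i^M$, and since these classes sit in even degree they commute. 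The support variety $V_R(M)\subseteq \mathbb A^c_{\overline k}$ is the zero locus of $\mathrm{ann}_{\mathcal S}\Ext^*_R(M,M)$, and the fundamental fact I would invoke is the equivalence $\pd_R M < \infty \iff V_R(M) = \{0\}$, with the variety computed from self-$\Ext$.

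Now assume $\Ext^e_R(M,M) = 0$ for an even $e = 2s$ with $s\geq 1$. The $s$-fold Yoneda power $(\chi_i^M)^s$ lives in $\Ext^{2s}_R(M,M) = 0$, so each $\chi_i^M$ is nilpotent; being commuting nilpotent elements of the Noetherian ring $\Ext^*_R(M,M)$ (module-finite over $\mathcal S$), they generate a nilpotent ideal, say $(\chi_1^M,\ldots,\chi_c^M)^N = 0$. Translating back through the $\mathcal S$-action, the irrelevant ideal satisfies $(\chi_1,\ldots,\chi_c)^N \subseteq \mathrm{ann}_{\mathcal S}\Ext^*_R(M,M)$, so $V_R(M) = \{0\}$ and hence $\pd_R M < \infty$. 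I expect the genuine difficulty to lie wholly in the machinery of the middle paragraph --- constructing the operators, proving Gulliksen finiteness, and establishing $\pd_R M < \infty \iff V_R(M)=\{0\}$ read off from self-$\Ext$ --- rather than in the short nilpotency argument that concludes. Note finally that the hypotheses are used sharply: the evenness of $e$ is precisely what realizes $\Ext^e$ as containing the powers $(\chi_i^M)^{e/2}$ of the degree-two operators, and $e\geq 2$ guarantees a genuine positive power; an odd vanishing degree would not control the even operators in this way.
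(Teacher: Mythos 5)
Your proposal is correct, and since the paper offers no proof of this lemma---it is quoted directly from \cite[Theorem 4.2]{AB}---the relevant comparison is with that source, whose argument you have essentially reconstructed: reduction along a quasi-deformation by faithfully flat base change, Gulliksen finiteness of $\Ext^*_R(M,M)$ over the ring of cohomology operators, and the support-variety criterion for finite projective dimension, with the evenness of $e$ used exactly to make the powers of the degree-two central classes land in the vanishing $\Ext$ module. No gaps to report; this is the same route as the cited proof.
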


Inspired by the notion of G-regular ring, we may ask the following natural question.

\begin{question}\label{ques-CI-reg}\rm When does the property $\cidim_RM=0$ force $M$ to be free? (In analogy to the concept of G-regular ring, it would be natural to coin the term CI-{\it regular} for a local ring $R$ with the property that every finite $R$-module with complete intersection dimension zero is necessarily free; notice, therefore, that every G-regular ring is CI-regular.)
\end{question}

Clearly, the answer is not always affirmative, since there are plenty of examples of (non-regular) complete intersection rings admitting non-free maximal Cohen-Macaulay modules. On the other hand, we have the following two results.

\begin{proposition}\label{cidimtor}
Let $R$ be a Cohen-Macaulay local ring of dimension $d$ with a canonical module. Suppose $M$ is a finite $R$-module with $\cidim_RM=0$ and  $$\Tor^R_j(M,M^\dagger)=0 \quad \mbox{for \,all} \quad j= 1, \ldots, t,$$ where either $t=d+1$ if $d$ is odd, or $t=d+2$. Then, $M$ is free.

\end{proposition}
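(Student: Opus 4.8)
The plan is to reduce to the complete intersection dimension criterion of Lemma \ref{cidim} by producing the vanishing of $\Ext^e_R(M,M)$ for a suitably chosen even integer $e\geq 2$; the engine for this is Theorem \ref{lmgeneralization}$(iii)$ applied with the second module taken to be $M$ itself.

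First I would observe that $\cidim_R M=0$ forces $M$ to be maximal Cohen-Macaulay. Indeed, by the formula (\ref{CI-dim-form}) we have $0=\cidim_R M=\depth R-\depth_R M$, and since $R$ is Cohen-Macaulay of dimension $d$ this gives $\depth_R M=d$. In particular $M$ is a legitimate choice for the maximal Cohen-Macaulay module appearing as $N$ in Theorem \ref{lmgeneralization}.

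Next, set $N=M$ and write $t=d+n$, so that $n=1$ in the case $d$ odd, $t=d+1$, and $n=2$ in the case $t=d+2$. With $N^\dagger=M^\dagger$, the hypothesis $\Tor^R_j(M,M^\dagger)=0$ for $j=1,\ldots,t$ is precisely the Tor-vanishing assumption of Theorem \ref{lmgeneralization}$(iii)$. Since $n\geq 1$ in both cases, the last assertion of part $(iii)$ yields $\Ext^k_R(M,M)=0$ for all $k=d+1,\ldots,d+n$. The purpose of the two admissible ranges for $t$ is exactly to ensure that this interval contains an even integer $e\geq 2$: when $d$ is odd one takes $e=d+1$ (already available from $n=1$), while when $d$ is even one takes $e=d+2$ (which requires $n=2$).

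Finally, having $\cidim_R M=0<\infty$ together with $\Ext^e_R(M,M)=0$ for this even $e\geq 2$, Lemma \ref{cidim} gives $\pd_R M<\infty$. Combining $\pd_R M<\infty$ with the maximal Cohen-Macaulayness of $M$ via the Auslander-Buchsbaum formula forces $\pd_R M=\depth R-\depth_R M=0$, so $M$ is free. I expect the only delicate point to be the parity bookkeeping alongside the degenerate low-dimensional cases---notably $d=0$, where Remark \ref{main-Art} is needed to license Theorem \ref{lmgeneralization}$(iii)$ for $n\geq 1$; apart from this verification, the argument is a direct assembly of the cited results.
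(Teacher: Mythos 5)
Your proposal is correct and follows essentially the same route as the paper: deduce maximal Cohen-Macaulayness from $\cidim_RM=0$ via (\ref{CI-dim-form}), apply Theorem \ref{lmgeneralization}$(iii)$ with $N=M$ to obtain the vanishing of $\Ext^k_R(M,M)$ for $k=d+1,\ldots,t$, and invoke Lemma \ref{cidim} together with Auslander--Buchsbaum. Your explicit parity bookkeeping and the remark handling the $d=0$ case via Remark \ref{main-Art} are details the paper leaves implicit, but the argument is the same.
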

\begin{proof} Using (\ref{CI-dim-form}), we have \, $0= \cidim_RM ={\rm depth}\,R - {\rm depth}\,M = d - {\rm depth}\,M$, so $M$ is maximal Cohen-Macaulay. Then
Theorem \ref{lmgeneralization}$(iii)$ ensures that $\Ext^k_R(M,M)=0$ whenever $d+1\leq k\leq t$. Now the result follows by Lemma \ref{cidim}.
\end{proof}



A finite module over a local ring $(R, \mathfrak{m})$ is said to be a \textit{vector bundle} if it is locally free on the punctured spectrum ${\rm Spec}\,R\setminus \{\mathfrak{m}\}$ of $R$.

\begin{proposition}\label{exttor}
Let $R$ be a Cohen-Macaulay local ring of dimension $d$ with a canonical module. Let $n\geq 0$ be an integer such that $d+n\geq 1$ is odd. If $M$ is a vector bundle with $\cidim_RM=0$, satisfying $$\Ext^j_R(M,M)=0 \quad \mbox{for \,all} \quad j= 1, \ldots, d+n$$ and $\Tor^R_{n+1}(M,M^\dagger)=0$, then $M$ is free.
\end{proposition}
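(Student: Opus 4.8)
The plan is to reduce everything to Lemma \ref{cidim}: since $\cidim_R M=0<\infty$, it suffices to exhibit a single even integer $e\geq 2$ with $\Ext^e_R(M,M)=0$, because then $\pd_R M<\infty$, and from the equalities in (\ref{CI-dim-form}) (valid once the projective dimension is finite) we get $\pd_R M=\cidim_R M=0$, i.e. $M$ is free. As a preliminary observation, (\ref{CI-dim-form}) already gives $\depth M=\depth R=d$, so $M$ is maximal Cohen-Macaulay; this is what allows Theorem \ref{lmgeneralization} to be invoked. The key structural idea is that, since $d+n$ is odd, the integer $e:=d+n+1$ is the even index we are after, so the whole problem becomes \emph{upgrading the given vanishing range $1,\dots,d+n$ by exactly one step}.

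To carry this out, I would apply Theorem \ref{lmgeneralization}$(ii)$ with $N=M$, which is legitimate since $\Ext^j_R(M,M)=0$ for $j=1,\dots,d+n$ (the Artinian case $d=0$ is covered by Remark \ref{main-Art}$(i)$, as then $d+n$ odd forces $n\geq 1$). This produces the isomorphism
$$\Ext^d_R\!\big(\Ext^{d+n+1}_R(M,M),\,\omega_R\big)\cong\Tor^R_{n+1}(M,M^\dagger),$$
whose right-hand side vanishes by hypothesis. Writing $X:=\Ext^{d+n+1}_R(M,M)$, it remains to deduce $X=0$ from the vanishing of $\Ext^d_R(X,\omega_R)$. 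Here the vector bundle assumption enters decisively: for every prime $\mathfrak{p}\neq\mathfrak{m}$ the module $M_\mathfrak{p}$ is free, and since $d+n+1\geq 2>0$ we get $X_\mathfrak{p}\cong\Ext^{d+n+1}_{R_\mathfrak{p}}(M_\mathfrak{p},M_\mathfrak{p})=0$. Thus $\Supp X\subseteq\{\mathfrak{m}\}$, so $X$ has finite length, whence $H^0_\mathfrak{m}(X)=X$ and $H^i_\mathfrak{m}(X)=0$ for $i>0$. Local duality then yields $\Ext^d_R(X,\omega_R)\cong H^0_\mathfrak{m}(X)^\vee=X^\vee$, and faithfulness of Matlis duality on finite-length modules forces $X=0$, i.e. $\Ext^{d+n+1}_R(M,M)=0$.

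Combining this with the hypothesis, I obtain $\Ext^j_R(M,M)=0$ for all $j=1,\dots,d+n+1$, and $e=d+n+1$ is even with $e\geq 2$; Lemma \ref{cidim} then gives $\pd_R M<\infty$, and the argument closes as above. I expect the main obstacle to be precisely the passage from $\Ext^d_R(X,\omega_R)=0$ to $X=0$: the content is the interplay between the vector bundle condition, which collapses the critical Ext module to finite length, and local duality, which converts the top-degree Ext vanishing (equivalently, the single Tor vanishing $\Tor^R_{n+1}(M,M^\dagger)=0$) into the vanishing of the module itself. Everything else is bookkeeping on the parity of $d+n$ and routine use of Auslander–Buchsbaum-type equalities.
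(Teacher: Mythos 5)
Your proposal is correct and follows essentially the same route as the paper's proof: deduce that $M$ is maximal Cohen-Macaulay from $\cidim_RM=0$ via (\ref{CI-dim-form}), apply Theorem \ref{lmgeneralization}$(ii)$ with $N=M$ to identify $\Ext^d_R(\Ext^{d+n+1}_R(M,M),\omega_R)$ with the vanishing Tor module, use the vector bundle hypothesis plus local duality to force $\Ext^{d+n+1}_R(M,M)=0$, and conclude with Lemma \ref{cidim} at the even index $e=d+n+1$. Your extra care about the Artinian case via Remark \ref{main-Art}$(i)$ and the explicit parity bookkeeping are fine refinements of details the paper leaves implicit.
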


\begin{proof} As $M$ is a vector bundle, ${\rm length}\Ext^{d+n+1}_R(M,M)<\infty$. Using local duality, we must have $\Ext^i_R(\Ext^{d+n+1}_R(M,M),\omega_R)=0$ for all $i\neq d$.
Moreover, as in the proof of Proposition \ref{cidimtor}, the $R$-module $M$ is maximal Cohen-Macaulay. Applying Theorem \ref{lmgeneralization}$(ii)$, we obtain $$\Ext^d_R(\Ext^{d+n+1}_R(M,M),\omega_R)\cong\Tor_{n+1}^R(M,M^\dagger)=0,$$ which thus forces $\Ext^{d+n+1}_R(M,M)=0$. Now the statement follows by Lemma \ref{cidim}.
\end{proof}



\subsection{Main freeness criteria} First we introduce some ingredients. Among the results in this section, we shall present a freeness criterion that generalizes the main one established in \cite{ACST}.


Let $R$ be a local ring. Given a finite $R$-module $M$, we can consider a minimal $R$-free presentation $\xymatrix@=1em{G\ar[r] & F\ar[r] & M\ar[r] & 0}$. The ({\it Auslander}) {\it transpose} of $M$, denoted $\Tr M$, is defined as the cokernel of the dual map $\xymatrix@=1em{F^*\ar[r] & G^*}$. The transpose is uniquely determined up to isomorphism.


\begin{lemma}$($\cite[Proposition 3.2 (i)]{KOT}$)$ \label{trlemma}
Let $R$ be a local ring and $M, N$ be finite $R$-modules. If $N\neq0$ and $\Tor_1^R(\Tr M,M\otimes_R N)=0$, then $M$ is free.
\end{lemma}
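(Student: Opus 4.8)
The plan is to translate the hypothesis into the vanishing of a stable Hom and then test it against one well-chosen homomorphism. The starting point is the classical identification, for finite $R$-modules $M$ and $L$,
$$\Tor_1^R(\Tr M, L)\cong\coker\big(M^*\otimes_R L\xrightarrow{\ \theta\ }\Hom_R(M,L)\big),$$
where $\theta(\varphi\otimes\ell)(x)=\varphi(x)\ell$ is the tensor-evaluation map; equivalently, $\Tor_1^R(\Tr M, L)$ is the stable module $\underline{\Hom}_R(M,L)=\Hom_R(M,L)/\{\text{maps factoring through a finite free module}\}$. I would either cite this from Auslander--Bridger or reprove it quickly by dualizing a minimal free presentation $F_1\to F_0\to M\to 0$, breaking the resulting four-term complex $0\to M^*\to F_0^*\to F_1^*\to\Tr M\to 0$ into short exact sequences and chasing $\otimes_R L$. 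Applying this with $L=M\otimes_R N$, the hypothesis $\Tor_1^R(\Tr M,M\otimes_R N)=0$ becomes the statement that \emph{every} $R$-homomorphism $M\to M\otimes_R N$ factors through a finite free $R$-module.

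Next I would feed one specific map into this. Since $N\neq 0$, Nakayama yields $n_0\in N\setminus\mathfrak m N$, and I would consider $\iota\colon M\to M\otimes_R N$, $x\mapsto x\otimes n_0$. By the previous paragraph there is a factorization $M\xrightarrow{\alpha}F\xrightarrow{\beta}M\otimes_R N$ with $F$ finite free. Reducing modulo $\mathfrak m$ and using $(M\otimes_R N)\otimes_R k\cong(M/\mathfrak m M)\otimes_k(N/\mathfrak m N)$, the map $\bar\iota$ sends $\bar x\mapsto\bar x\otimes\bar n_0$; because $\bar n_0\neq 0$, this is injective (over a field, $v\otimes w=0$ with $w\neq 0$ forces $v=0$). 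As $\bar\iota=\bar\beta\,\bar\alpha$, the reduction $\bar\alpha\colon M/\mathfrak m M\to F/\mathfrak m F$ is injective.

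Finally I would deduce freeness from injectivity of $\bar\alpha$. If $m_1,\dots,m_\mu$ is a minimal generating set of $M$, then $\alpha(m_1),\dots,\alpha(m_\mu)$ are linearly independent modulo $\mathfrak m F$, hence form a basis of a free direct summand $F'\subseteq F$. Since the $m_i$ generate $M$, one has $\alpha(M)=F'$, so $\alpha\colon M\twoheadrightarrow F'$ is a surjection onto a free (hence projective) module; it splits, giving $M\cong F'\oplus\ker\alpha$. Comparing minimal numbers of generators, $\mu=\operatorname{rank}F'+\mu(\ker\alpha)=\mu+\mu(\ker\alpha)$ forces $\ker\alpha=0$, so $M\cong F'$ is free.

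I expect the main obstacle to be the absence of any natural map $N\to R$: since $N$ is an arbitrary nonzero module, one cannot hope to produce a splitting of a free cover of $M$ by applying a functional of $N$ to a factorization of $\iota$. The device that circumvents this is the reduction modulo $\mathfrak m$, where tensoring with the nonzero class $\bar n_0$ is automatically injective, so that injectivity of $\bar\alpha$---and thus freeness---follows without touching the structure of $N$. The only other point demanding care is the variance in the stable-Hom identification (it is the cokernel, not the kernel, of $\theta$), which I would confirm on the test case $R=k[[t]]$, $M=k$ before relying on it.
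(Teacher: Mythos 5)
Your proof is correct. Note that the paper does not actually prove this lemma---it imports it verbatim from \cite[Proposition 3.2(i)]{KOT}---so there is no internal proof to compare against; your argument is a valid self-contained one, and it follows the same standard route as the cited source: the Auslander--Bridger identification of $\Tor_1^R(\Tr M,L)$ with $\coker\bigl(M^*\otimes_R L\to\Hom_R(M,L)\bigr)$ (equivalently, the stable Hom), tested on the map $x\mapsto x\otimes n_0$ with $n_0\in N\setminus\mathfrak{m}N$, followed by the reduction-modulo-$\mathfrak{m}$ splitting and Nakayama count. All the individual steps (the variance of the identification, the injectivity of $v\mapsto v\otimes\bar n_0$ over the residue field, and the passage from injectivity of $\bar\alpha$ to a split surjection onto a free summand) check out.
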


\begin{lemma}$($\cite[Lemma 2.2]{Y}$)$ \label{tor0}
Let $R$ be a local ring and let $M,N$ be finite $R$-modules such that $\pd_RM<\infty$ and $N$ is maximal Cohen-Macaulay. Then, $M$ and $N$ are Tor-independent.
\end{lemma}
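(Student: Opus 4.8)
The plan is to turn the statement into a depth inequality and then exploit the depth-sensitivity of the minimal free resolution of $M$ tensored with $N$. First I would apply the Auslander--Buchsbaum formula: since $\pd_R M<\infty$, one has $\pd_R M=\depth R-\depth_R M\le\depth R\le\dim R$, whereas the maximal Cohen--Macaulayness of $N$ gives $\depth_R N=\dim R$. Hence $\pd_R M\le\depth_R N$, and it suffices to prove the following: if $p:=\pd_R M<\infty$ and $\depth_R N\ge p$, then $\Tor_i^R(M,N)=0$ for all $i\ge1$.

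Next I would set up a minimal free resolution $0\to F_p\to\cdots\to F_0\to M\to0$ and tensor it with $N$ to obtain a complex $C_\bullet$ with $C_i\cong N^{b_i}$ and $H_i(C_\bullet)=\Tor_i^R(M,N)$. Each $C_i$ has depth $\depth_R N\ge p\ge i$, which is precisely the numerical input behind the Peskine--Szpiro acyclicity lemma; rather than verify that lemma's homological hypothesis in every degree, I would bound the depth of the \emph{top} nonvanishing homology directly. Let $q$ be the largest index with $\Tor_q^R(M,N)\ne0$ and suppose, for contradiction, that $q\ge1$. Because $H_i(C_\bullet)=0$ for $i>q$, the truncation $0\to C_p\to\cdots\to C_{q+1}\to B_q\to0$ (with $B_q$ the $q$-boundaries) is exact, so the standard depth estimate along a resolution gives $\depth_R B_q\ge\depth_R N-(p-q-1)\ge q+1\ge2$. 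The $q$-cycles $Z_q$ form a submodule of $C_q$, a module of positive depth, whence $\depth_R Z_q\ge1$; feeding these into the short exact sequence $0\to B_q\to Z_q\to\Tor_q^R(M,N)\to0$ and applying the depth lemma yields $\depth_R\Tor_q^R(M,N)\ge1$.

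To convert this positivity into vanishing I would localize. Pick a prime $\mathfrak{p}$ minimal in $\Supp\Tor_q^R(M,N)$; then $\Tor_q^R(M,N)_{\mathfrak{p}}\cong\Tor_q^{R_{\mathfrak{p}}}(M_{\mathfrak{p}},N_{\mathfrak{p}})$ is nonzero of finite length over $R_{\mathfrak{p}}$, hence has depth $0$. On the other hand $M_{\mathfrak{p}}$ still has finite projective dimension, $N_{\mathfrak{p}}$ is still maximal Cohen--Macaulay, and $q$ remains the top nonvanishing index (localization is exact), so the depth estimate of the previous paragraph applies verbatim over $R_{\mathfrak{p}}$ and forces $\depth_{R_{\mathfrak{p}}}\Tor_q^{R_{\mathfrak{p}}}(M_{\mathfrak{p}},N_{\mathfrak{p}})\ge1$. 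This contradiction shows $q\le0$, i.e.\ $M$ and $N$ are Tor-independent.

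The main obstacle is conceptual rather than computational: the short-exact-sequence depth lemma only ever produces \emph{lower} bounds on depth, so depth estimates by themselves can never yield a vanishing result, and the proof genuinely requires the localization step to manufacture a finite-length (depth-zero) module out of the top Tor and collide it with the bound $\ge1$. A secondary point that needs care is ensuring the passage to $R_{\mathfrak{p}}$ preserves the hypotheses; here I would use that over a Cohen--Macaulay ring the maximal Cohen--Macaulay property localizes, which keeps the inequality $\depth_{R_{\mathfrak{p}}}N_{\mathfrak{p}}\ge\pd_{R_{\mathfrak{p}}}M_{\mathfrak{p}}$ intact and so legitimizes reapplying the depth estimate at $\mathfrak{p}$.
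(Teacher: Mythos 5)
The paper does not actually prove this lemma; it is quoted verbatim from \cite{Y}, so your argument can only be judged on its own terms. Your overall mechanism is the standard one and is sound in outline: bound the depth of the top nonvanishing Tor from below via the tensored minimal resolution, then localize at a minimal prime of its support to produce a nonzero finite-length (hence depth-zero) module and obtain a contradiction. The depth estimate in your second paragraph is correct, including the boundary case $q=p$, where $B_q=0$ and $\Tor_q^R(M,N)=Z_q$ is a nonzero submodule of $C_q$.

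There are, however, two genuine defects. First, your opening reduction is to a \emph{false} statement: $\pd_RM<\infty$ together with $\depth_RN\ge\pd_RM$ does not imply Tor-independence. Take $R=\mathbf{k}[[x,y]]$ and $M=N=R/(x)$: then $\pd_RM=1=\depth_RN$, yet $\Tor_1^R(M,N)\cong R/(x)\neq0$. The reason the reduction fails is exactly the pivot of your own argument: the inequality $\depth_RN\ge\pd_RM$ is not stable under localization (in this example, at ${\mathfrak p}=(x)$ one gets $\depth_{R_{\mathfrak p}}N_{\mathfrak p}=0<1=\pd_{R_{\mathfrak p}}M_{\mathfrak p}$), whereas maximal Cohen-Macaulayness is. Your third paragraph silently reinstates the hypothesis ``$N_{\mathfrak p}$ is still maximal Cohen-Macaulay,'' which is not available in the auxiliary statement you claimed it suffices to prove; read as a proof of that auxiliary statement, the argument is therefore broken, and it must instead be read as a proof of the original lemma with the MCM hypothesis carried along throughout. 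Second, your justification that MCM localizes is given ``over a Cohen-Macaulay ring,'' but $R$ here is an arbitrary local ring. The fact you need is true in full generality, but requires a different justification: a Cohen-Macaulay module $N$ localizes to a Cohen-Macaulay module with $\dim N_{\mathfrak p}=\dim N-\dim R/{\mathfrak p}$ for ${\mathfrak p}\in\Supp N$ (\cite[Theorem 2.1.3]{BH}), and combining this with $\dim R_{\mathfrak p}+\dim R/{\mathfrak p}\le\dim R$ forces $\depth_{R_{\mathfrak p}}N_{\mathfrak p}=\dim R_{\mathfrak p}$; alternatively, Ischebeck's theorem (\cite[Exercise 3.1.24]{BH}) yields directly $\depth_{R_{\mathfrak p}}N_{\mathfrak p}\ge\depth_RN-\dim R/{\mathfrak p}=\dim R-\dim R/{\mathfrak p}\ge\dim R_{\mathfrak p}\ge\depth R_{\mathfrak p}\ge\pd_{R_{\mathfrak p}}M_{\mathfrak p}$, which is all your localized depth estimate requires. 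With the false reduction deleted and the localization step justified in one of these ways, your proof is correct.
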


For a ring $R$ and a non-negative integer $n$, we set $\X^n(R)=\{\mathfrak{p}\in\Spec\,R \mid \height\mathfrak{p}\leq n\}$. For instance, saying that a finite $R$-module $M$ is locally of finite projective dimension on $\X^n(R)$ means that $\pd_{R_\mathfrak{p}}M_\mathfrak{p}<\infty$ for all $\mathfrak{p}\in\X^n(R)$. Hence, $M$ is locally free on $\X^n(R)$ if $M_\mathfrak{p}$ is free over $R_\mathfrak{p}$ for all $\mathfrak{p}\in\X^n(R)$. Note that, if $R$ is local, $M$ being locally free on $\X^{\dim R-1}(R)$ means exactly that $M$ is a vector bundle.

\begin{lemma}$($\cite[2.4]{ACST}$)$ \label{takahashidual}
Let $R$ be a Cohen-Macaulay local ring of dimension $d\geq1$ with canonical module $\omega_R$, and let $M, N$ be finite $R$-modules. If $M$ is locally of finite projective dimension on $\X^{d-1}(R)$ and $N$ is maximal Cohen-Macaulay, then  $$\Ext^{d+j}_R(M,N^\dagger)\cong\Ext^d_R(\Tor_j^R(M,N),\omega_R) \quad \mbox{for \,all} \quad j\geq 1.$$
\end{lemma}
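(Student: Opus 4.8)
The plan is to realize both sides of the asserted isomorphism as the abutments of the two spectral sequences attached to a single double complex, and then to use the hypothesis on $M$ to force one of these spectral sequences to be concentrated in a single row outside of the column $p=0$. Concretely, I would fix a free resolution $P_\bullet \to M$ and an injective resolution $E^\bullet$ of $\omega_R$, and form the first-quadrant double complex $K^{p,q}=\Hom_R(P_p\otimes_R N, E^q)\cong\Hom_R(P_p,\Hom_R(N,E^q))$. Taking vertical cohomology first, the maximal Cohen-Macaulayness of $N$ gives $\Ext^q_R(P_p\otimes_R N,\omega_R)=0$ for $q>0$ (each $P_p\otimes_R N$ being a finite direct sum of copies of $N$, cf. \cite[Proposition 3.3.3]{BH}), so this spectral sequence collapses onto the row $\Hom_R(P_\bullet,N^\dagger)$ and identifies the abutment in total degree $n$ with $\Ext^n_R(M,N^\dagger)$. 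Taking horizontal cohomology first and using that each $E^q$ is injective, the row cohomology becomes $\Hom_R(\Tor_p^R(M,N),E^q)$, whence the second spectral sequence has second page ${}'E_2^{p,q}=\Ext^q_R(\Tor_p^R(M,N),\omega_R)$ converging to $\Ext^{p+q}_R(M,N^\dagger)$.

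The decisive step is to control the modules $\Tor_p^R(M,N)$ for $p\geq 1$, and here I would argue that they all have finite length. Indeed, for a prime $\mathfrak{p}$ with $\height\mathfrak{p}\leq d-1$, localizing gives $\pd_{R_\mathfrak{p}}M_\mathfrak{p}<\infty$ by hypothesis and $N_\mathfrak{p}$ maximal Cohen-Macaulay over the Cohen-Macaulay local ring $R_\mathfrak{p}$; Lemma \ref{tor0} then yields $\Tor_p^R(M,N)_\mathfrak{p}\cong\Tor_p^{R_\mathfrak{p}}(M_\mathfrak{p},N_\mathfrak{p})=0$ for every $p\geq 1$. Thus $\Tor_p^R(M,N)$ is supported only at primes of height $d$; since $R$ is Cohen-Macaulay, hence catenary and equidimensional, the unique such prime is $\mathfrak{m}$, so $\Tor_p^R(M,N)$ has finite length for all $p\geq 1$. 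By local duality a finite-length module $X$ satisfies $\Ext^q_R(X,\omega_R)=0$ for $q\neq d$ (with $\Ext^d_R(X,\omega_R)\cong X^\vee$), so I obtain ${}'E_2^{p,q}=0$ whenever $p\geq 1$ and $q\neq d$. Combined with the vanishing $\Ext^q_R(-,\omega_R)=0$ for $q>d$ (as $\id_R\omega_R=d$), the page ${}'E_2$ is supported only in the column $p=0$ and in the single row $q=d$.

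With this shape established, the conclusion is a routine convergence bookkeeping. For $p\geq 1$, any differential touching the spot $(p,d)$ lands in a region already known to vanish: an incoming $d_r$ originates at $(p-r,d+r-1)$, whose second index exceeds $d$, and an outgoing $d_r$ targets $(p+r,d-r+1)$, which lies in a column of index $\geq 1$ but in a row different from $d$. Hence every such spot is a permanent cycle, ${}'E_2^{p,d}={}'E_\infty^{p,d}$ for all $p\geq 1$. Finally, in total degree $d+j$ with $j\geq 1$ the only possibly nonzero graded piece of the abutment is ${}'E_\infty^{j,d}$, since the column $p=0$ contributes only in total degrees $\leq d$; the induced filtration on $\Ext^{d+j}_R(M,N^\dagger)$ therefore has a single nonzero factor, giving $\Ext^{d+j}_R(M,N^\dagger)\cong{}'E_2^{j,d}=\Ext^d_R(\Tor_j^R(M,N),\omega_R)$ for all $j\geq 1$, as desired.

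I expect the main obstacle to be precisely the finite-length claim for the higher Tor modules: the rest is a formal manipulation of the two standard hyper-derived functor spectral sequences, whereas this step is where the hypothesis that $M$ is locally of finite projective dimension on $\X^{d-1}(R)$ is actually consumed, and it requires combining the localization behaviour of the maximal Cohen-Macaulay module $N$, the Tor-rigidity provided by Lemma \ref{tor0}, and the equidimensionality of the Cohen-Macaulay ring $R$ to pin the support of each $\Tor_p^R(M,N)$ down to the maximal ideal.
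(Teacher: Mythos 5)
Your proof is correct, but note that the paper never proves this lemma: it is imported verbatim, with a citation, from \cite[2.4]{ACST}, so there is no internal proof to compare against, and what you have produced is a from-scratch reconstruction. Your reconstruction is sound, and it is in fact the same technique the paper itself uses elsewhere (cf.\ the proof of Theorem \ref{lmgeneralization}(iii)): the double complex $\Hom_R(P_\bullet\otimes_RN,E^\bullet)$ gives, on one hand, the collapse onto $\Hom_R(P_\bullet,N^\dagger)$ because $\Ext^{>0}_R(N,\omega_R)=0$ for $N$ maximal Cohen--Macaulay, and on the other hand the spectral sequence ${}'E_2^{p,q}=\Ext^q_R(\Tor_p^R(M,N),\omega_R)\Rightarrow\Ext^{p+q}_R(M,N^\dagger)$; your finite-length claim for $\Tor_p^R(M,N)$, $p\geq 1$ (Lemma \ref{tor0} applied over $R_\mathfrak{p}$ for $\mathfrak{p}\in\X^{d-1}(R)$), is exactly what makes it degenerate, and it is indeed the step where the hypothesis on $M$ is consumed.

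Two minor remarks. First, you do not need catenarity or equidimensionality to conclude that a height-$d$ prime is maximal: in any local ring of dimension $d$ one has $\height\mathfrak{p}+\dim R/\mathfrak{p}\leq d$, so $\height\mathfrak{p}=d$ forces $\mathfrak{p}=\mathfrak{m}$. Second, your stated bidegrees for the differentials are transposed: for the row-filtration spectral sequence, in your indexing (first index the Tor degree, second the Ext degree) the differentials map $(p,q)$ to $(p-r+1,q+r)$, not to $(p+r,q-r+1)$. This slip is immaterial here, because in either convention every differential touching a spot $(p,d)$ with $p\geq 1$ has its source or target either in a row of index $>d$ (killed by $\id_R\omega_R=d$) or in a column of index $\geq 1$ and row $\neq d$ (killed by your finite-length claim), so the permanent-cycle conclusion and the convergence bookkeeping stand as written.
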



Given a non-negative integer $n$, a finite $R$-module $M$ is said to satisfy the \textit{Serre condition} $(S_n)$ if $\depth_{R_\mathfrak{p}}M_{\mathfrak{p}}\geq\min\{n, \height\mathfrak{p}\}$ for all $\mathfrak{p}\in\Supp M$.

The theorem below, when applied with $N=\omega_R$, provides a generalization of \cite[Theorem 1.4]{ACST} (the main result therein) different from that obtained in \cite[Corollary 3.9]{STAK}. Although the latter just requires $M$ to satisfy $(S_1)$ (whereas we need $(S_2)$), it also needs $M^*$ to be maximal Cohen-Macaulay (equivalently, $\Ext^j_R(M^*, \omega_R)=0$ for all $j=1, \ldots, d$), while
our result makes use of any maximal Cohen-Macaulay $R$-module $N$.

\begin{theorem}\label{acstgeneralization}
Let $R$ be a Cohen-Macaulay local ring of dimension $d\geq2$, and let $M, N$ be finite $R$-modules. Assume $n$ is an integer such that $1\leq n\leq d-1$. Suppose the following conditions:  \begin{itemize}
    \item [(i)] $M$ is locally of finite projective dimension on $\X^n(R)$;
    \item [(ii)] $M$ satisfies $(S_2)$ and $N$ is maximal Cohen-Macaulay;
    \item [(iii)] $\Ext^j_R(M^*,N)=0$ for all $j=1, \ldots, d$;
    \item [(iv)] $\Ext^i_R(M,\Hom_R(M^*,N))=0$ for all $i=n, \ldots, d-1$.
\end{itemize}
Then, $M$ is free.
\end{theorem}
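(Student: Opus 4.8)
The plan is to use the Auslander transpose to reduce the freeness of $M$ to the vanishing of a single Tor module, and then to invoke the criterion in Lemma \ref{trlemma}. I would begin by extracting the formal content of the first three hypotheses. Since $n\geq 1$, condition (i) gives $\pd_{R_{\mathfrak p}}M_{\mathfrak p}<\infty$ for every prime $\mathfrak p$ with $\height\mathfrak p\leq 1$; combined with the $(S_2)$ condition in (ii), the Auslander--Buchsbaum formula makes $M_{\mathfrak p}$ free in codimension $\leq 1$, so $M$ is reflexive and $M\cong M^{**}$ (in particular $M$ is free if and only if $M^*$ is). Next, applying Theorem \ref{lmgeneralization}(i) to the pair $(M^*,N)$ --- legitimate by (ii) and (iii) --- I obtain that $L:=\Hom_R(M^*,N)$ is maximal Cohen--Macaulay and that $M^*\otimes_RN^\dagger\cong L^\dagger$. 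Thus $X:=M^*\otimes_RN^\dagger$ is a nonzero maximal Cohen--Macaulay module with $X^\dagger\cong L$.

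The engine for turning the Ext-vanishing (iv) into homological information is the duality method already used to prove Theorem \ref{lmgeneralization}. Applied to the pair $(M,X)$, it yields a spectral sequence of the form $\Ext^p_R(\Tor^R_q(M,X),\omega_R)\Rightarrow\Ext^{p-q}_R(M,X^\dagger)=\Ext^{p-q}_R(M,L)$, whose abutment vanishes for $p-q=n,\dots,d-1$ precisely by (iv), and whose $E_2$-page vanishes for $p>d$ by local duality. The role of hypothesis (i), via Lemma \ref{tor0}, is to confine the relevant input to high codimension: on $\X^n(R)$ the module $M$ has finite projective dimension, so $\Tor^R_j(M,N^\dagger)$, being a Tor of a finite-projective-dimension module with the maximal Cohen--Macaulay module $N^\dagger$, is supported in codimension $>n$ for $j>0$ and acquires finite length in the top range. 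Feeding the support restriction, local duality, and the vanishing window of (iv) into the spectral sequence, I expect to force the low $\Tor^R_q(M,M^*\otimes_RN^\dagger)$ to vanish, with Lemma \ref{takahashidual} supplying the Ext-to-Tor interchange needed to clear the top cohomological degree once the free locus has been pushed out to the punctured spectrum.

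For the endgame I would fix a minimal presentation $F_1\to F_0\to M\to 0$; the standard transpose exact sequences identify $\Tor^R_1(\Tr M,Y)$ with the middle homology of the complex $M^*\otimes_RY\to F_0^*\otimes_RY\to F_1^*\otimes_RY$, and I would use this to convert the Tor-vanishing obtained above into $\Tor^R_1(\Tr M,M\otimes_RN^\dagger)=0$. Since $N^\dagger\neq 0$ (as $N$ is maximal Cohen--Macaulay), Lemma \ref{trlemma} then yields that $M$ is free.

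The main obstacle is exactly this last conversion together with the numerical bookkeeping: matching the cohomological window $i=n,\dots,d-1$ of (iv) with the codimension-$n$ finite-projective-dimension window of (i), and bridging the gap between the Tor-vanishing for the pair $(M,M^*\otimes_RN^\dagger)$ that the spectral sequence delivers and the transpose Tor-vanishing $\Tor^R_1(\Tr M,M\otimes_RN^\dagger)=0$ that Lemma \ref{trlemma} requires --- the two involve $M^*$ versus $M$ and $\Tr M$ versus $M$, so a genuine identification (through the evaluation and trace maps) is needed. I anticipate that a clean execution proceeds by an induction enlarging the free locus one codimension at a time, from the codimension-$\leq 1$ freeness of the first step up through $\X^{d-1}(R)$ (so that $M$ becomes a vector bundle), each stage consuming one value of $i$ in (iv) together with one application of local duality, and the global criterion Lemma \ref{trlemma} being applied only at the end.
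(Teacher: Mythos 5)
Your opening moves are the paper's own: conditions (i)--(ii) make $M$ locally free on $\X^1(R)$, hence reflexive, and Theorem \ref{lmgeneralization}$(i)$ applied to the pair $(M^*,N)$ (legitimate by (ii)--(iii)) shows $L:=\Hom_R(M^*,N)$ and $X:=M^*\otimes_RN^\dagger\cong L^\dagger$ are maximal Cohen--Macaulay. But from there the architecture goes wrong, and the step you yourself flag as needing ``a genuine identification'' is exactly where the argument breaks. You aim to produce $\Tor_1^R(\Tr M, M\otimes_RN^\dagger)=0$ and feed it to Lemma \ref{trlemma}. There are two obstructions. First, the hypotheses (iii)--(iv) say nothing about the pair $(M,N)$: $M\otimes_RN^\dagger$ need not be maximal Cohen--Macaulay (that would require $\Ext^j_R(M,N)=0$, which is not assumed), so neither Lemma \ref{takahashidual} nor the finite-length/local-duality computation can be run against it. Second, even if your spectral sequence (whose abutment, incidentally, is $\Ext^{p+q}_R(M,L)$, not $\Ext^{p-q}_R(M,L)$) did yield vanishing of the low $\Tor_q^R(M,M^*\otimes_RN^\dagger)$, this information cannot reach $\Tor_1$ of any transpose: the syzygy shift only gives $\Tor_{i+2}^R(\Tr M^*,Y)\cong\Tor_i^R(M,Y)$ for $i\geq 1$, and the degrees $\Tor_1$, $\Tor_2$ of the transpose -- the only ones Lemma \ref{trlemma} cares about -- are precisely the ones this shift loses.

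The missing idea is to run the whole argument on $M^*$ rather than $M$, exploiting the reflexivity you established in your first step (you even note ``$M$ is free if and only if $M^*$ is,'' but then never use it). Since $M\cong M^{**}$, dualizing a minimal presentation of $M^*$ gives an exact sequence $0\to M\to G_0^*\to G_1^*\to\Tr M^*\to 0$, i.e.\ $M$ is the second syzygy of $\Tr M^*$. This has two consequences that make everything click: (a) $\Tr M^*$ is locally of finite projective dimension on $\X^n(R)$, so Lemma \ref{tor0} gives $\Tor_1^R(\Tr M^*,M^*\otimes_RN^\dagger)_\mathfrak{p}=0$ and Lemma \ref{trlemma} (applied over $R_\mathfrak{p}$ with the module $N^\dagger_\mathfrak{p}$) makes $M^*_\mathfrak{p}$, hence $M_\mathfrak{p}$, free for all $\mathfrak{p}\in\X^n(R)$; (b) the cohomological shift $\Ext^{h+1}_R(\Tr M^*,-)\cong\Ext^{h-1}_R(M,-)$ is what finally lets hypothesis (iv) enter. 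Indeed, for a prime of height $h>n$ one localizes, assumes by induction on $h$ that $M$ is a vector bundle, concludes that $T:=\Tor_1^R(\Tr M^*,M^*\otimes_RN^\dagger)$ has finite length, and computes
$$T\cong\Ext^h_R(T,\omega_R)^\vee\cong\Ext^{h+1}_R(\Tr M^*,(M^*\otimes_RN^\dagger)^\dagger)^\vee\cong\Ext^{h-1}_R(M,\Hom_R(M^*,N))^\vee=0,$$
using local duality, Lemma \ref{takahashidual}, the isomorphism $(M^*\otimes_RN^\dagger)^\dagger\cong\Hom_R(M^*,N)$ from Theorem \ref{lmgeneralization}$(i)$, the shift (b), and finally (iv) (note $n\leq h-1\leq d-1$). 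Lemma \ref{trlemma} then gives $M^*$ free, hence $M$ free. So your toolkit and your codimension-by-codimension induction are the right ones, but they must be wired to the pair $(\Tr M^*,\,M^*\otimes_RN^\dagger)$; wired to $(\Tr M,\,M\otimes_RN^\dagger)$ as you propose, the argument cannot be completed.
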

\begin{proof}
Recall $M$ satisfies $(S_2)$ and notice that $\pd_{R_\mathfrak{q}}M_{\mathfrak{q}}<\infty$ for all $\mathfrak{q}\in\X^1(R)$. Therefore, $M$ is locally free (hence locally reflexive) on $\X^1(R)$, and then by \cite[Proposition 1.4.1(b)]{BH} we deduce that $M$ is reflexive. Hence, $M$ is isomorphic to the second syzygy module of $\Tr M^*$, which implies $\pd_{R_\mathfrak{p}}\Tr M^*_\mathfrak{p}<\infty$ whenever $\mathfrak{p}\in\X^n(R)$. Note $R$ can be assumed to be ${\mathfrak m}$-adically complete, so $R$ possesses a canonical module. As $N$ is maximal Cohen-Macaulay and in virtue of the hypothesis $(iii)$, we can apply Theorem \ref{lmgeneralization}$(i)$ to obtain that $M^*\otimes_RN^\dagger$ is maximal Cohen-Macaulay and $M^*\otimes_RN^\dagger\cong\Hom_R(M^*,N)^\dagger$. Now Lemma \ref{tor0} gives $\Tor^R_1(\Tr M^*, M^*\otimes_RN^\dagger)_\mathfrak{p}=0$ for all $\mathfrak{p}\in\X^n(R)$, and Lemma \ref{trlemma} in turn ensures that $M^*_\mathfrak{p}$ is free over $R_\mathfrak{p}$ for all $\mathfrak{p}\in\X^n(R)$; since $M$ is reflexive, $M_\mathfrak{p}$ must be a free $R_\mathfrak{p}$-module as well.

Now, pick $\mathfrak{p}\in\Spec\,R$ with $\height\mathfrak{p}=h>n$. After localizing at $\mathfrak{p}$, we can assume that $R$ is a Cohen-Macaulay local ring of dimension $h$, and that the $R$-module $M$ is reflexive and (by induction on $h$) a vector bundle, satisfying in addition $\Ext^{h-1}_R(M,\Hom_R(M^*,N))=0$. Proceeding as above, we get $${\rm length}\Tor_1^R(\Tr M^*,M^*\otimes_RN^\dagger) < \infty.$$ By local duality (below, $H_{\mathfrak m}^0$ denotes the 0-th local cohomology functor) and Lemma \ref{takahashidual}, we have $$\begin{array}{lll}\Tor_1^R(\Tr M^*, M^*\otimes_RN^\dagger)&\cong H^0_\mathfrak{m}(\Tor_1^R(\Tr M^*, M^*\otimes_RN^\dagger))\\ & \cong\Ext^h_R(\Tor_1^R(\Tr M^*, M^*\otimes_RN^\dagger),\omega_R)^\vee\\& \cong\Ext^{h+1}_R(\Tr M^*, (M^*\otimes_RN^\dagger)^\dagger)^\vee\\& \cong \Ext^{h-1}_R(M,\Hom_R(M^*,N))^\vee=0.\end{array}$$ Finally, by Lemma \ref{trlemma}, the $R$-module $M^*$ is necessarily free and hence so is $M$.
\end{proof}

We record the main result of \cite{ACST} as a corollary.

\begin{corollary}$($\cite[Theorem 1.4]{ACST}$)$\label{1.4acst}
Let $R$ be a Cohen-Macaulay local ring of dimension $d\geq2$ with a canonical module, and let $M$ be a finite $R$-module. Assume $n$ is an integer such that $1\leq n\leq d-1$. Suppose the following conditions:
\begin{itemize}
    \item [(i)] $M$ is locally of finite projective dimension on $\X^n(R)$;
    \item [(ii)] $M$ satisfies $(S_2)$ and $M^*$ is maximal Cohen-Macaulay;
    \item [(iii)] $\Ext^i_R(M,(M^*)^\dagger)=0$ for all $i=n, \ldots, d-1$.
\end{itemize} Then, $M$ is free.
\end{corollary}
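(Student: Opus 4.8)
The plan is to deduce Corollary~\ref{1.4acst} directly from Theorem~\ref{acstgeneralization} by making the specialization $N=\omega_R$ and checking that each of the four hypotheses of the theorem follows from the three hypotheses of the corollary. Since the canonical module $\omega_R$ is maximal Cohen-Macaulay, the condition ``$N$ maximal Cohen-Macaulay'' in hypothesis $(ii)$ of the theorem is automatic, and the remaining part ``$M$ satisfies $(S_2)$'' is identical in both statements. Likewise, hypothesis $(i)$ of the theorem is word-for-word hypothesis $(i)$ of the corollary, so nothing is needed there.

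The crux is to translate hypotheses $(iii)$ and $(iv)$ of the theorem into the language of the corollary, using the identification $N^\dagger=\Hom_R(\omega_R,\omega_R)\cong R$ (so that the canonical dual with respect to $\omega_R$ coincides with the algebraic dual). First I would observe that $\Hom_R(M^*,\omega_R)=(M^*)^\dagger$ by definition of the canonical dual, so hypothesis $(iv)$ of the theorem, namely $\Ext^i_R(M,\Hom_R(M^*,N))=0$ for $i=n,\ldots,d-1$, becomes exactly hypothesis $(iii)$ of the corollary, $\Ext^i_R(M,(M^*)^\dagger)=0$ for $i=n,\ldots,d-1$. It then remains to verify hypothesis $(iii)$ of the theorem, $\Ext^j_R(M^*,\omega_R)=0$ for all $j=1,\ldots,d$. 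This is where the assumption ``$M^*$ is maximal Cohen-Macaulay'' in hypothesis $(ii)$ of the corollary enters: as noted in the paragraph following Corollary~\ref{MCMtensorequivalence}, a finite module over a Cohen-Macaulay local ring with canonical module is maximal Cohen-Macaulay if and only if its higher Ext into $\omega_R$ vanishes in degrees $1,\ldots,d$. Applying this equivalence to $M^*$ yields precisely $\Ext^j_R(M^*,\omega_R)=0$ for $j=1,\ldots,d$.

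With all four hypotheses of Theorem~\ref{acstgeneralization} verified for the choice $N=\omega_R$, the conclusion that $M$ is free follows immediately from the theorem. I do not anticipate a genuine obstacle here, since the corollary is a clean specialization; the only point requiring care is the bookkeeping of the two dualities, making sure that $\Hom_R(M^*,\omega_R)$ is consistently read as $(M^*)^\dagger$ and that $\omega_R^\dagger\cong R$ is invoked correctly. One should also note, as the surrounding discussion already does, that the completeness reduction used inside the proof of the theorem to guarantee existence of a canonical module is harmless, since the corollary assumes $R$ has a canonical module outright.
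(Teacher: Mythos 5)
Your proof is correct and matches the paper's intent exactly: the paper records this corollary as the specialization $N=\omega_R$ of Theorem~\ref{acstgeneralization}, and your verification of the four hypotheses — in particular translating ``$M^*$ is maximal Cohen-Macaulay'' into $\Ext^j_R(M^*,\omega_R)=0$ for $j=1,\ldots,d$ via the standard characterization, and reading $\Hom_R(M^*,\omega_R)$ as $(M^*)^\dagger$ — is precisely the bookkeeping the paper leaves implicit.
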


\begin{remark}\label{MCMdual}\rm $(i)$ For a Cohen-Macaulay local ring $R$ of dimension $d$ and a finite $R$-module $M$, it is of interest (e.g., in view of Corollary \ref{1.4acst} above) to investigate when $M^*$ is maximal Cohen-Macaulay. In this regard, there is the following general result which follows from \cite[Lemma, p.\,2763]{JD}. If $R$ is a local ring and $N$ is a maximal Cohen-Macaulay $R$-module such that
$$\Ext^i_R(M, N)=0 \quad \mbox{for \,all} \quad i=1, \ldots, {\rm max}\{1, d-2\},$$ then ${\rm Hom}_R(M, N)$ is maximal Cohen-Macaulay.

\smallskip

$(ii)$ According to \cite[Corollary 3.4]{Dao-Se}, if $R$ is a Cohen-Macaulay local ring with a canonical module and $M$ is a finite $R$-module satisfyng $(S_2)$ and such that $M^{\dagger}$ is maximal Cohen-Macaulay, then $M$ is maximal Cohen-Macaulay as well. Along with part $(i)$
above, we conclude that if $M$ satisfies $(S_2)$ and $\Ext^i_R(M, \omega_R)=0$ for all $i=1, \ldots, {\rm max}\{1, d-2\}$, then $M$ is maximal Cohen-Macaulay. This fact seems to suggest the question below.
\end{remark}
 
\begin{question}\rm Let $R$ be a Cohen-Macaulay local ring with canonical module $\omega_R$ and dimension $d$. Let $M$ be a finite $R$-module satisfying $(S_k)$ with either $k=1$ or $3\leq k<d$, and such that $\Ext^i_R(M, \omega_R)=0$ for all $i=1, \ldots, {\rm max}\{1, d-k\}$. It is true that $M$ is maximal Cohen-Macaulay?
\end{question}

Next, we are able to use our Theorem \ref{acstgeneralization} to recover yet another result from the literature (precisely, from \cite{A}) by taking $n=d-1$ and $N=R$, with $R$ Gorenstein.

\begin{corollary}$($\cite[Corollary 10]{A}$)$\label{arayacorollary}
Let $R$ be a Gorenstein local ring of dimension $d\geq2$, and let $M$ be a maximal Cohen-Macaulay vector bundle. If $\Ext^{d-1}_R(M,M)=0$, then $M$ is free.
\end{corollary}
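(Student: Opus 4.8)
The goal is to deduce Corollary \ref{arayacorollary} from Theorem \ref{acstgeneralization} by specializing $n=d-1$ and $N=R$, working over a Gorenstein ring. The first thing I would do is verify that the four hypotheses of Theorem \ref{acstgeneralization} are met under the stated assumptions. Since $R$ is Gorenstein, it is Cohen-Macaulay with canonical module $\omega_R\cong R$, so the dual $N^\dagger=\Hom_R(R,\omega_R)\cong\omega_R\cong R$ and, more importantly, $\Hom_R(M^*,N)=\Hom_R(M^*,R)=M^{**}$. Because $M$ is maximal Cohen-Macaulay and $R$ is Gorenstein, $M$ is reflexive, so $M^{**}\cong M$; thus the module $\Hom_R(M^*,N)$ appearing in hypotheses (iii) and (iv) is just $M$ itself (after identifying $M^{**}$ with $M$).

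Next I would check each condition in turn. For (i): since $M$ is a vector bundle, $M_\mathfrak{p}$ is free over $R_\mathfrak{p}$ for every nonmaximal prime $\mathfrak{p}$, i.e.\ for all $\mathfrak{p}\in\X^{d-1}(R)$; in particular $M$ is locally of finite projective dimension on $\X^{d-1}(R)=\X^n(R)$ with $n=d-1$. For (ii): $M$ maximal Cohen-Macaulay over a Cohen-Macaulay ring automatically satisfies $(S_2)$, and $N=R$ is trivially maximal Cohen-Macaulay. For (iii): I need $\Ext^j_R(M^*,R)=0$ for all $j=1,\ldots,d$; this holds because $M^*$ is the dual of a maximal Cohen-Macaulay module over a Gorenstein ring, hence itself maximal Cohen-Macaulay, and over a Gorenstein ring maximal Cohen-Macaulayness is equivalent to $\Ext^j_R(M^*,R)=0$ for $j=1,\ldots,d$. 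For (iv): with $n=d-1$ the index range $i=n,\ldots,d-1$ collapses to the single value $i=d-1$, and using $\Hom_R(M^*,N)=\Hom_R(M^*,R)\cong M^{**}\cong M$, the required vanishing $\Ext^{d-1}_R(M,\Hom_R(M^*,N))=0$ becomes exactly the hypothesis $\Ext^{d-1}_R(M,M)=0$.

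Having matched all four hypotheses, Theorem \ref{acstgeneralization} immediately yields that $M$ is free, completing the proof. The assumption $d\geq 2$ is exactly what Theorem \ref{acstgeneralization} requires, and it also guarantees $1\leq n=d-1$, so the admissible range for $n$ is respected.

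The main obstacle, in my view, is bookkeeping rather than mathematics: one must correctly track the identifications $\Hom_R(M^*,R)\cong M^{**}\cong M$ and ensure that $M^*$ is maximal Cohen-Macaulay so that hypothesis (iii) holds. The reflexivity of $M$ and the stability of the maximal Cohen-Macaulay property under $(-)^*$ over Gorenstein rings are standard (see \cite{BH}), so no genuine difficulty arises there; the only subtle point is recognizing that the index range in (iv) degenerates to a single Ext-vanishing, which is precisely the lone hypothesis $\Ext^{d-1}_R(M,M)=0$ supplied in the corollary.
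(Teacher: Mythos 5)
Your proposal is correct and is exactly the paper's argument: the authors prove this corollary by applying Theorem \ref{acstgeneralization} with $n=d-1$ and $N=R$ over the Gorenstein ring $R$, and your verification of hypotheses (i)--(iv) — using that $M$ is reflexive, $M^*$ is maximal Cohen-Macaulay, and $\Hom_R(M^*,R)\cong M^{**}\cong M$ — fills in precisely the routine checks the paper leaves implicit.
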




In the sequel, we provide further criteria for the freeness of modules over Cohen-Macaulay rings.

\begin{proposition}\label{freenesstr}  Let $R$ be a Cohen-Macaulay local ring of dimension $d$, and let $M,N$ be finite $R$-modules with $N$ maximal Cohen-Macaulay. Suppose the following conditions:
\begin{itemize}
\item [(i)] $\Ext^i_R(M,N)=0$ for all $i=1, \ldots, d$ if $d\geq 1$, or for all $i>0$ if $d=0$;
\item [(ii)] $\Ext^j_R(\Tr M,\Hom_R(M,N))=0$ for all $j=1, \ldots, d+1$.
\end{itemize} Then, $M$ is free.
\end{proposition}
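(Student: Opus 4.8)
The plan is to exhibit the vanishing of a single Tor module of the form $\Tor_1^R(\Tr M, M\otimes_R N^\dagger)$ and then invoke Lemma \ref{trlemma} (with the maximal Cohen-Macaulay module $N^\dagger$ playing the role of the auxiliary module $N$ there) to conclude that $M$ is free. The whole strategy thus reduces to engineering this one Tor-vanishing, and the hypotheses are clearly tailored to feed Theorem \ref{lmgeneralization}$(i)$ and a transpose-duality computation.

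First I would apply Theorem \ref{lmgeneralization}$(i)$ (in the Artinian case $d=0$, its extension recorded in Remark \ref{main-Art}$(i)$): since $N$ is maximal Cohen-Macaulay and condition $(i)$ gives $\Ext^j_R(M,N)=0$ for $j=1,\ldots,d$, we obtain that $\Hom_R(M,N)$ is maximal Cohen-Macaulay together with the key isomorphism
\[
M\otimes_R N^\dagger\;\cong\;\Hom_R(M,N)^\dagger.
\]
Thus condition $(ii)$, namely $\Ext^j_R(\Tr M,\Hom_R(M,N))=0$ for $j=1,\ldots,d+1$, is really a statement about $\Ext$ of $\Tr M$ into a maximal Cohen-Macaulay module. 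The next step is to translate this $\Ext$-vanishing into the desired $\Tor$-vanishing. The classical tool is the standard change-of-rings/transpose identity relating $\Tor_i^R(\Tr M, X)$ to $\Ext$ modules: from the minimal presentation defining $\Tr M$ one gets, for $i\geq 2$, an isomorphism $\Tor_i^R(\Tr M, X)\cong \Ext^{i-2}_R(M,X)$ up to the low-degree correction terms, so that in particular $\Tor_1^R(\Tr M, X)$ sits in an exact sequence governed by $\Hom_R(M,X)$ and $M\otimes_R X$. Applying this with $X=\Hom_R(M,N)^\dagger\cong M\otimes_R N^\dagger$, and using the canonical-dual adjunction $\Hom_R(\Tr M, \Hom_R(M,N))\cong \Hom_R(\Tr M\otimes_R\,\cdot\,,\cdot)$ alongside the isomorphism above, the hypothesis $(ii)$ should convert precisely into $\Tor_1^R(\Tr M, M\otimes_R N^\dagger)=0$.

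The main obstacle, and the step requiring the most care, is the bookkeeping in this last translation: making the dualities line up so that the vanishing of $\Ext^j_R(\Tr M,\Hom_R(M,N))$ for the specified range $j=1,\ldots,d+1$ matches exactly the Tor module $\Tor_1^R(\Tr M, M\otimes_R N^\dagger)$ needed for Lemma \ref{trlemma}. One expects here either a direct transpose computation or a second invocation of the spectral-sequence machinery of Theorem \ref{lmgeneralization}$(iii)$ applied to the pair $(\Tr M, \Hom_R(M,N))$, exploiting that $\Hom_R(M,N)$ is maximal Cohen-Macaulay so that its canonical dual is again well-behaved. Once $\Tor_1^R(\Tr M, M\otimes_R N^\dagger)=0$ is established, Lemma \ref{trlemma} applies with the nonzero module $N^\dagger$ and yields freeness of $M$ immediately. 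I would double-check the degree shift $d+1$ in $(ii)$ against the off-by-two shift intrinsic to $\Tr$ (the transpose of $M$ is a first syzygy phenomenon, so $\Ext^{d+1}$ of $\Tr M$ corresponds to $\Ext^{d-1}$ or a local-cohomology top degree of $M$), as this is where an indexing slip would most easily occur.
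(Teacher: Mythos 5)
Your skeleton is the same as the paper's: apply Theorem \ref{lmgeneralization}$(i)$ (with Remark \ref{main-Art}$(i)$ when $d=0$) to get that $\Hom_R(M,N)$ is maximal Cohen-Macaulay and $M\otimes_R N^\dagger\cong\Hom_R(M,N)^\dagger$, reduce everything to the single vanishing $\Tor_1^R(\Tr M, M\otimes_R N^\dagger)=0$, and finish with Lemma \ref{trlemma}. However, the step that actually carries the proof --- converting hypothesis $(ii)$ into that Tor vanishing --- fails in both of the routes you sketch. The ``classical transpose identity'' you invoke is misstated: there is no isomorphism $\Tor_i^R(\Tr M,X)\cong\Ext^{i-2}_R(M,X)$. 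The correct low-degree relation is the exact sequence
$$\Tor_2^R(\Tr M,X)\to M^*\otimes_R X\to\Hom_R(M,X)\to\Tor_1^R(\Tr M,X)\to 0,$$
together with $\Tor_i^R(\Tr M,X)\cong\Tor_{i-2}^R(M^*,X)$ for $i\geq 3$ (Tor of $M^*$, not Ext of $M$); in particular $\Tor_1^R(\Tr M,X)$ is the cokernel of an evaluation map, and this machinery never engages hypothesis $(ii)$, which concerns Ext \emph{out of} $\Tr M$, not out of $M$. Your fallback route cites the wrong part of the theorem: part $(iii)$ takes Tor-vanishing as \emph{input} and produces Ext-vanishing as output, i.e., it runs in the direction opposite to what you need, so invoking it for the pair $(\Tr M,\Hom_R(M,N))$ would be circular.

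The correct and very short move --- the one the paper makes --- is Theorem \ref{lmgeneralization}$(ii)$ with $n=1$, applied to the pair $(\Tr M,\Hom_R(M,N))$: since $\Hom_R(M,N)$ is maximal Cohen-Macaulay by the first step, hypothesis $(ii)$ is exactly the required Ext-vanishing in the range $1,\ldots,d+n$ with $n=1$, and the conclusion is $\Tor_1^R(\Tr M,\Hom_R(M,N)^\dagger)=0$; the isomorphism $\Hom_R(M,N)^\dagger\cong M\otimes_R N^\dagger$ then completes the translation. This also settles the indexing worry you raised at the end: the bound $d+1$ is just $d+n$ for $n=1$, and no off-by-two correction from $\Tr$ enters because the theorem is applied directly to $\Tr M$ rather than to $M$. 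Finally, note one omission: the proposition does not assume $R$ has a canonical module, so before using $N^\dagger$ or Theorem \ref{lmgeneralization} at all you must pass to the completion (hypotheses $(i)$ and $(ii)$ are preserved under completion, and freeness descends), as the paper does in its first line.
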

\begin{proof}
We can assume that $R$ is complete, hence it admits a canonical module. Because of the hypothesis $(i)$ and Theorem \ref{lmgeneralization}$(i)$ (also Remark \ref{main-Art}$(i)$ in the Artinian case), we obtain that the $R$-modules $\Hom_R(M,N)$ and $M\otimes_RN^\dagger\cong\Hom_R(M,N)^\dagger$ are maximal Cohen-Macaulay. On the other hand, due to $(ii)$ and Theorem \ref{lmgeneralization}$(ii)$ (and again Remark \ref{main-Art}$(i)$ if $d=0$), we have $\Tor^R_1(\Tr M, M\otimes_RN^\dagger)=0$. The result now follows by Lemma \ref{trlemma}.
\end{proof}

Applying this proposition with $N=\omega_R$, we obtain the following result.

\begin{corollary}\label{trfreenesscriterion}
Let $R$ be a Cohen-Macaulay local ring of dimension $d$ with a canonical module, and let  $M$ be a maximal Cohen-Macaulay $R$-module. Then, $M$ is free if $$\Ext^j_R(\Tr M,M^\dagger)=0 \quad \mbox{for \,all} \quad j=1, \ldots, d+1.$$ \end{corollary}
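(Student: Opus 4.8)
The plan is to obtain this as a direct specialization of Proposition \ref{freenesstr}, taking $N = \omega_R$. First I would record that the canonical module $\omega_R$ is itself maximal Cohen-Macaulay, so the role of $N$ in the proposition is legitimately filled. With this choice the canonical dual $\Hom_R(M, \omega_R)$ is, by definition, precisely $M^\dagger$; hence hypothesis $(ii)$ of the proposition reads $\Ext^j_R(\Tr M, M^\dagger) = 0$ for $j = 1, \ldots, d+1$, which is exactly the vanishing assumed in the corollary.

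The only remaining point is hypothesis $(i)$ of the proposition, namely $\Ext^i_R(M, \omega_R) = 0$ for $i = 1, \ldots, d$ (or for all $i > 0$ when $d = 0$). Here I would invoke the standard characterization of maximal Cohen-Macaulayness in terms of the vanishing of Ext against the canonical module --- the very fact recorded just after Corollary \ref{MCMtensorequivalence} --- which guarantees that for a maximal Cohen-Macaulay module $M$ these Ext modules vanish automatically. Thus both hypotheses of Proposition \ref{freenesstr} are met, and the proposition delivers that $M$ is free.

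In short, there is no genuine obstacle here: the entire substance resides in Proposition \ref{freenesstr}, and the corollary is simply its clean instance for $N = \omega_R$. The one observation worth making explicit is that the maximal Cohen-Macaulay hypothesis on $M$ does double duty --- it discharges condition $(i)$ for free via the Ext-vanishing characterization, so that only the transpose condition $(ii)$ need be imposed by hand.
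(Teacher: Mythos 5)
Your proposal is correct and is precisely the paper's own argument: the paper derives this corollary by applying Proposition \ref{freenesstr} with $N=\omega_R$, exactly as you do. Your added observation that the maximal Cohen-Macaulay hypothesis on $M$ discharges condition $(i)$ automatically (via the standard vanishing $\Ext^i_R(M,\omega_R)=0$ for all $i>0$, cf. \cite[Proposition 3.3.3]{BH}) is the implicit step the paper leaves unstated, including in the Artinian case $d=0$.
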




Next, we apply Corollary \ref{trfreenesscriterion} in the particular cases $d=0$ and $d=1$ in order to get, respectively, the following statements. 


\begin{corollary}
Let $R$ be an Artinian local ring with a canonical module. If $M$ is a finite $R$-module such that $\Ext^1_R(\Tr M,M^\dagger)=0$, then $M$ is free.
\end{corollary}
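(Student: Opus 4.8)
The plan is to derive this Artinian statement directly from Corollary \ref{trfreenesscriterion} by specializing the dimension. Indeed, Corollary \ref{trfreenesscriterion} asserts that over a $d$-dimensional Cohen-Macaulay local ring with canonical module, a maximal Cohen-Macaulay module $M$ is free as soon as $\Ext^j_R(\Tr M, M^\dagger)=0$ for all $j=1,\ldots,d+1$. When $R$ is Artinian we have $d=0$, so the only cohomological hypothesis that survives is $\Ext^1_R(\Tr M, M^\dagger)=0$, which is precisely the assumption in the present statement.

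The one point requiring attention is that Corollary \ref{trfreenesscriterion} is phrased for a \emph{maximal Cohen-Macaulay} module $M$, whereas here $M$ is merely an arbitrary finite module. The key observation is that this distinction is vacuous in the Artinian case: over an Artinian local ring every finite module has depth $0$ and dimension $0$, hence is trivially maximal Cohen-Macaulay. So no extra hypothesis on $M$ is needed to invoke the corollary. First I would note that $R$ is Cohen-Macaulay of dimension $d=0$ and possesses a canonical module $\omega_R$ by assumption, and that the given $M$ is automatically maximal Cohen-Macaulay. Then I would simply apply Corollary \ref{trfreenesscriterion} with $d=0$: the vanishing range $j=1,\ldots,d+1$ collapses to $j=1$, so the hypothesis $\Ext^1_R(\Tr M, M^\dagger)=0$ is exactly what the corollary requires, and freeness of $M$ follows at once.

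There is essentially no obstacle here, since the result is a pure specialization; the proof is a single invocation of the preceding corollary once one records that maximal Cohen-Macaulayness of $M$ is automatic over an Artinian ring. For completeness one might also observe that the canonical dual $M^\dagger = \Hom_R(M,\omega_R)$ is well defined precisely because $R$ admits the canonical module $\omega_R$, so all the objects appearing in the hypothesis make sense. The write-up should therefore be short: state that $M$ is maximal Cohen-Macaulay because $\dim R=0$, and then cite Corollary \ref{trfreenesscriterion} in the case $d=0$.
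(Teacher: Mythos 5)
Your proof is correct and is exactly the paper's own route: the paper derives this statement by specializing Corollary \ref{trfreenesscriterion} to the case $d=0$, with the same (implicit) observation that every finite module over an Artinian local ring is automatically maximal Cohen-Macaulay, so the hypothesis collapses to the single vanishing $\Ext^1_R(\Tr M, M^\dagger)=0$.
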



\begin{corollary}\label{HIWconjectureext12}
Let $R$ be a one-dimensional Cohen-Macaulay local ring with a canonical module, and let $M$ be a finite torsionfree $R$-module. Then, $M$ is free if $$\Ext^1_R(\Tr M,M^\dagger)=\Ext^2_R(\Tr M,M^\dagger)=0.$$
\end{corollary}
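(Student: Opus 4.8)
The plan is to deduce this corollary directly from Corollary \ref{trfreenesscriterion} by reducing the one-dimensional torsionfree case to the maximal Cohen-Macaulay hypothesis required there. Over a one-dimensional Cohen-Macaulay local ring $R$, a finite module $M$ is maximal Cohen-Macaulay precisely when $\depth_RM\geq 1$, i.e., when $M$ has no nonzero element annihilated by a system of parameters; equivalently, over such a ring, $M$ is maximal Cohen-Macaulay if and only if it is torsionfree. So the first step is simply to observe that the torsionfree hypothesis on $M$ is, in dimension one, exactly the assertion that $M$ is maximal Cohen-Macaulay.

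Once this identification is in place, I would invoke Corollary \ref{trfreenesscriterion} with $d=1$. That corollary asserts that a maximal Cohen-Macaulay module $M$ over a Cohen-Macaulay local ring of dimension $d$ with a canonical module is free provided $\Ext^j_R(\Tr M, M^\dagger)=0$ for all $j=1,\ldots,d+1$. Substituting $d=1$, the range $j=1,\ldots,d+1$ becomes $j=1,2$, so the hypothesis reads precisely $\Ext^1_R(\Tr M, M^\dagger)=\Ext^2_R(\Tr M, M^\dagger)=0$, which is exactly what is assumed in the present statement. Thus the conclusion that $M$ is free follows immediately.

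There is essentially no obstacle here: the corollary is a specialization of Corollary \ref{trfreenesscriterion} obtained by setting $d=1$, with the only substantive point being the equivalence (torsionfree $\Leftrightarrow$ maximal Cohen-Macaulay) that holds in dimension one. The proof is therefore a one-line deduction, and the care required is merely to confirm that the two displayed Ext-vanishing conditions match the range $j=1,\ldots,d+1$ with $d=1$, and that the ambient hypotheses (Cohen-Macaulay, dimension one, canonical module) are exactly those of Corollary \ref{trfreenesscriterion}.
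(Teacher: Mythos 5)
Your proof is correct and is exactly the paper's own argument: the paper obtains this corollary by specializing Corollary \ref{trfreenesscriterion} to $d=1$, using the standard fact that over a one-dimensional Cohen-Macaulay local ring a finite torsionfree module is maximal Cohen-Macaulay. Nothing is missing.
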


Before providing another consequence of Proposition \ref{freenesstr} (along with Theorem \ref{lmgeneralization}), we need to invoke a couple of auxiliary lemmas. If $U$ is a finite $R$-module, we can consider its \emph{non-free locus}  $\NF(U)=\{\mathfrak{p}\in\Spec\,R \mid U_\mathfrak{p} \ \mbox{is not} \ R_\mathfrak{p}\mbox{-free}\}$.

\begin{lemma}$($\cite[Theorem 2.8 (2)]{STAK}$)$ \label{nonfreelemma}
Let $R$ be a Cohen-Macaulay local ring of dimension $d\geq1$ with a canonical module. Let $M$ and $N$ be finite $R$-modules with $M$ locally totally reflexive on the punctured spectrum of $R$ and $N$  maximal Cohen-Macaulay, such that $\NF(M)\cap \NF(N)\subseteq\{\mathfrak{m}\}$. Then, there exist isomorphisms $$\Ext^j_R(M,N)^\vee\cong\Ext^{d+1-j}_R(\Tr M,N^\dagger) \quad \mbox{for \,all} \quad j= 1, \ldots, d.$$ 
\end{lemma}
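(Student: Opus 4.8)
The plan is to combine a finite-length reduction with local duality and a composition (hyperext) spectral sequence, using the Auslander--Bridger transpose sequences to transport between $M$ and $\Tr M$ across the reflection $j\mapsto d+1-j$.

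First I would observe that the hypotheses force the relevant cohomology to have finite length. Fix a prime $\mathfrak{p}\neq\mathfrak{m}$. Since $\NF(M)\cap\NF(N)\subseteq\{\mathfrak{m}\}$, either $M_\mathfrak{p}$ is free, in which case $\Ext^j_{R_\mathfrak{p}}(M_\mathfrak{p},N_\mathfrak{p})=0$ for $j\geq1$, or $N_\mathfrak{p}$ is free and $M_\mathfrak{p}$ is totally reflexive, so that again $\Ext^j_{R_\mathfrak{p}}(M_\mathfrak{p},N_\mathfrak{p})=0$ for $j\geq1$. Hence $\Ext^j_R(M,N)$ is supported only at $\mathfrak{m}$, i.e. has finite length, for every $j\geq1$; the same dichotomy (using that the transpose of a free, resp. totally reflexive, module is again free, resp. totally reflexive, up to free summands, and that $N_\mathfrak{p}$ is free in the remaining case) shows that $\Tor^R_q(\Tr M,N)$ has finite length for every $q\geq1$. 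For a finite-length module $X$, local duality gives $\Ext^p_R(X,\omega_R)=0$ for $p\neq d$ and $\Ext^d_R(X,\omega_R)\cong X^\vee$; in particular $\Ext^j_R(M,N)^\vee\cong\Ext^d_R(\Ext^j_R(M,N),\omega_R)$, which rewrites the left-hand side of the claim in terms of $\omega_R$.

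Next I would set up the engine on the right-hand side. Because $N$ is maximal Cohen-Macaulay, $\mathbf{R}\Hom_R(N,\omega_R)\simeq N^\dagger$ is concentrated in degree $0$, so the adjunction $\mathbf{R}\Hom_R(\Tr M,\mathbf{R}\Hom_R(N,\omega_R))\simeq\mathbf{R}\Hom_R(\Tr M\otimes^{\mathbf{L}}_RN,\omega_R)$ yields, exactly as in the proof of Theorem \ref{lmgeneralization}, a spectral sequence
$$E_2^{p,q}=\Ext^p_R(\Tor^R_q(\Tr M,N),\omega_R)\Rightarrow\Ext^{p+q}_R(\Tr M,N^\dagger).$$
By the finite-length vanishing above, for $q\geq1$ only the column $p=d$ survives, with $E_2^{d,q}\cong\Tor^R_q(\Tr M,N)^\vee$, and $E_2^{p,q}=0$ for $p>d$. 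An inspection of the second page, analogous to the shape analysis carried out for Theorem \ref{lmgeneralization}, shows that the differentials into and out of the surviving entries degenerate, so that each $\Ext^i_R(\Tr M,N^\dagger)$ is identified with a single term of this spectral sequence.

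Finally I would bridge the two sides using the Auslander--Bridger transpose sequences, namely the four-term exact sequence $0\to\Ext^1_R(\Tr M,L)\to M\otimes_RL\to\Hom_R(M^*,L)\to\Ext^2_R(\Tr M,L)\to0$ together with the syzygy isomorphisms $\Ext^i_R(\Tr M,L)\cong\Ext^{i-2}_R(M^*,L)$ for $i\geq3$, and their tensor analogues linking $\Tor^R_q(\Tr M,N)$ to $\Ext^j_R(M,N)$. Feeding these into the identification from the previous paragraph should produce the asserted isomorphism $\Ext^j_R(M,N)^\vee\cong\Ext^{d+1-j}_R(\Tr M,N^\dagger)$ for $j=1,\ldots,d$. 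The main obstacle, and the step demanding the most care, is the precise matching of homological degrees across the reflection $j\mapsto d+1-j$: the spectral sequence naturally computes the Matlis duals of positive $\Tor$'s of $\Tr M$ and thus controls $\Ext^{\geq d+1}_R(\Tr M,N^\dagger)$, whereas the claim concerns the complementary low range $\Ext^{\leq d}_R(\Tr M,N^\dagger)$. Reaching this range forces me to treat the $q=0$ strand of the spectral sequence --- governed by the maximal Cohen-Macaulay modules $\Tr M\otimes_RN$, $\Hom_R(M,N)$ and $N^\dagger$ --- and the boundary Auslander--Bridger terms, and to verify that these non-finite-length contributions are annihilated after applying $\Ext^\bullet_R(-,\omega_R)$, so that only the finite-length pieces $\Ext^j_R(M,N)^\vee$ survive.
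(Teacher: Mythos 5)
A preliminary remark: the paper offers no proof of this lemma at all --- it is imported verbatim from \cite[Theorem 2.8(2)]{STAK} --- so your attempt can only be measured against what a complete argument requires. Your first two steps are correct: the dichotomy forced by $\NF(M)\cap\NF(N)\subseteq\{\mathfrak{m}\}$ (together with local total reflexivity of $M$) does give finite length of $\Ext^j_R(M,N)$ and of $\Tor_q^R(\Tr M,N)$ for $j,q\geq 1$, and the spectral sequence $E_2^{p,q}=\Ext^p_R(\Tor_q^R(\Tr M,N),\omega_R)\Rightarrow\Ext^{p+q}_R(\Tr M,N^\dagger)$ is legitimate and degenerates at $E_2$, with exactly one surviving entry in each total degree. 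But note what this degeneration actually says in the range the lemma concerns: for $i=d+1-j\leq d$ it yields $\Ext^{i}_R(\Tr M,N^\dagger)\cong\Ext^{i}_R(\Tr M\otimes_RN,\omega_R)\cong H^{d-i}_{\mathfrak{m}}(\Tr M\otimes_RN)^\vee$ (local duality). Hence the entire lemma reduces to the identification $H^{k}_{\mathfrak{m}}(\Tr M\otimes_RN)\cong\Ext^{k+1}_R(M,N)$ for $0\leq k\leq d-1$, and nothing in your proposal proves this. That identification is where the reflection $j\mapsto d+1-j$ lives; leaving it as "the step demanding the most care" leaves the theorem unproved.

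Moreover, the sketch you give for that step is misdirected on three counts. First, $\Tr M\otimes_RN$ is \emph{not} maximal Cohen--Macaulay: by Auslander's four-term sequence applied to $\Tr M$ (using $\Tr\Tr M\approx M$),
$$0\to\Ext^1_R(M,N)\to\Tr M\otimes_RN\to\Hom_R((\Tr M)^*,N)\to\Ext^2_R(M,N)\to 0,$$
it contains $\Ext^1_R(M,N)$ as a finite-length submodule; its failure to be Cohen--Macaulay is precisely what encodes the duality, so the goal is not to show that non-finite-length contributions are "annihilated" but to compute the low local cohomology of $\Tr M\otimes_RN$ in terms of the $\Ext^j_R(M,N)$ (and $\Hom_R(M,N)$ need not be maximal Cohen--Macaulay either). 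Second, there is no "tensor analogue linking $\Tor_q^R(\Tr M,N)$ to $\Ext^j_R(M,N)$": what is true is $\Tor_1^R(\Tr M,N)\cong\underline{\Hom}_R(M,N)$, $\Tor_2^R(\Tr M,N)\cong\ker\bigl(M^*\otimes_RN\to\Hom_R(M,N)\bigr)$, and $\Tor_q^R(\Tr M,N)\cong\Tor_{q-2}^R(M^*,N)$ for $q\geq 3$; none of these is an $\Ext^j_R(M,N)$. Third, by your own degeneration those Tor terms occupy the column $p=d$ and govern only $\Ext^{>d}_R(\Tr M,N^\dagger)$, a range the statement never touches, so they can be discarded outright. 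What the missing step actually needs is: the four-term sequence above; the depth estimate $\depth_R\Hom_R(X,N)\geq\min\{2,d\}$ for $N$ maximal Cohen--Macaulay (handling $k=0,1$); the identification $(\Tr M)^*\cong\Omega^2M$ together with $\Ext^{k-1}_R(\Omega^2M,N)\cong\Ext^{k+1}_R(M,N)$; and a second comparison argument (another two-row spectral sequence, or induction) giving $H^{k}_{\mathfrak{m}}(\Hom_R(\Omega^2M,N))\cong\Ext^{k-1}_R(\Omega^2M,N)$ for $2\leq k\leq d-1$. With these ingredients your skeleton can be completed, but as written there is a genuine gap exactly at the decisive step.
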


\begin{lemma}\label{freetensor} Let $R$ be a local ring and $T, U$ be non-zero finite $R$-modules such that $T\otimes_RU$ is free. Then, $T$ and $U$ are free.
\end{lemma}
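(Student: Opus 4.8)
The plan is to reduce the statement to a Nakayama-type argument obtained by tensoring a minimal free presentation of one of the factors. First I would record the standard fact that $T\otimes_R U\neq 0$: since $T$ and $U$ are nonzero finite modules over the local ring $R$, Nakayama gives $T/\mathfrak{m}T\neq 0$ and $U/\mathfrak{m}U\neq 0$, and then $(T\otimes_R U)\otimes_R (R/\mathfrak{m})\cong (T/\mathfrak{m}T)\otimes_{R/\mathfrak{m}}(U/\mathfrak{m}U)\neq 0$; hence $T\otimes_R U$ is free of some rank $r\geq 1$.

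Next I would fix a minimal free presentation $R^p\xrightarrow{\phi} R^n\to T\to 0$, where $n$ is the minimal number of generators of $T$. Minimality means that every entry of the matrix of $\phi$ lies in $\mathfrak{m}$, so that $\operatorname{im}\phi\subseteq\mathfrak{m}R^n$. Applying $-\otimes_R U$ (which is right exact) yields an exact sequence $U^p\xrightarrow{\phi\otimes U} U^n\xrightarrow{\pi} T\otimes_R U\to 0$, and by the previous paragraph $T\otimes_R U$ is free. The key observation is that $\ker\pi=\operatorname{im}(\phi\otimes U)\subseteq\mathfrak{m}U^n$, because each coordinate of an element in that image is an $R$-combination of elements of $U$ whose coefficients are the entries of $\phi$, all of which lie in $\mathfrak{m}$.

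The heart of the argument is then purely formal. Since $T\otimes_R U$ is free, the surjection $\pi$ splits, giving a decomposition $U^n\cong (T\otimes_R U)\oplus\ker\pi$ in which $\ker\pi$, as a direct summand of a finite module, is finitely generated. Comparing this decomposition with the containment $\ker\pi\subseteq\mathfrak{m}U^n=\mathfrak{m}(T\otimes_R U)\oplus\mathfrak{m}\ker\pi$ forces $\ker\pi\subseteq\mathfrak{m}\ker\pi$, so $\ker\pi=\mathfrak{m}\ker\pi$ and Nakayama's lemma gives $\ker\pi=0$. Thus $\pi$ is an isomorphism and $U^n$ is free; as $U$ is a direct summand of the free module $U^n$ over a local ring, $U$ is itself free. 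Finally, with $U\cong R^m$ free, the hypothesis reads $T^m\cong T\otimes_R U$ is free, and the same summand argument (or the equality $\pd_R T=\pd_R T^m=0$) shows that $T$ is free as well.

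I expect the only delicate point to be the formal step isolating $\ker\pi=0$. A surjection $U^n\to T\otimes_R U$ between modules having the same minimal number of generators induces an isomorphism modulo $\mathfrak{m}$ but need not be injective a priori; it is precisely the combination of the splitting of $\pi$ with the containment $\ker\pi\subseteq\mathfrak{m}U^n$ that upgrades this to a genuine isomorphism. Everything else is routine bookkeeping with minimal presentations and Nakayama's lemma.
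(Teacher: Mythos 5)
Your proof is correct, and it takes a genuinely different route from the paper's, even though both arguments open the same way: tensor a free presentation of $T$ with $U$ and use the fact that the resulting surjection onto the free module $T\otimes_RU$ splits. The paper works with an \emph{arbitrary} free presentation $F_1\to F_0\to T\to 0$ and, after splitting off $G=T\otimes_RU$ as a direct summand of $F_0\otimes_RU$, tensors that decomposition once more with $T$: this exhibits $G\otimes_RT\cong T^r$ (where $r\geq 1$ is the rank of $G$) as a direct summand of the free module $F_0\otimes_RG$, so $T$ is projective, hence free over the local ring; the freeness of $U$ is then deduced "by a completely analogous argument." You instead insist on a \emph{minimal} presentation, observe that the kernel of $\pi\colon U^n\to T\otimes_RU$ lies in $\mathfrak{m}U^n$, and combine the splitting of $\pi$ with Nakayama's lemma to conclude that $\pi$ is an isomorphism; your handling of the delicate step is sound, since in the internal decomposition $U^n=s(T\otimes_RU)\oplus\ker\pi$ the two components of an element of $\ker\pi$ must be $0$ and the element itself, which is exactly what forces $\ker\pi\subseteq\mathfrak{m}\ker\pi$. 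The trade-off between the two mechanisms (minimality plus Nakayama, versus a second tensoring plus projectivity of summands) is real: your version proves the sharper statement that the natural surjection $U^n\to T\otimes_RU$ is an isomorphism, where $n$ is the minimal number of generators of $T$ (equivalently, every entry of the minimal presentation matrix of $T$ annihilates $U$), and it yields both conclusions from a single presentation --- once $U$ is free, $T\otimes_RU\cong T^m$ immediately forces $T$ to be free, with no symmetric repetition needed. The paper's version, in exchange, never invokes minimality and so is purely formal, using only right exactness, splitting, and the fact that finite projective modules over a local ring are free.
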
 
\begin{proof} This is an elementary fact whose proof we supply for convenience. The condition of $T$ and $U$ being non-zero is equivalent to $T\otimes_RU\neq 0$. Now pick an $R$-free presentation $\xymatrix@=1em{F_1\ar[r] & F_0\ar[r] & T\ar[r] & 0}$ of the module $T$. Tensoring it with $U$, and using that the module $G=T\otimes_RU$ is free, the resulting map splits so that $G$ is a direct summand of $F_0\otimes_RU$. Tensoring such a decomposition with $T$, we obtain that $G\otimes_RT$ is a direct summand of the module $F_0\otimes_RU\otimes_RT=F_0\otimes_RG$, which is free, i.e., $G\otimes_RT$ is projective, hence free because $R$ is local. This is equivalent to $T$ being free. By a completely analogous argument, $U$ must be free as well.
\end{proof}

\begin{corollary}\label{NFlocus}
Let $R$ be a Cohen-Macaulay local ring of dimension $d\geq1$ with a canonical module. Let $M$ and $N$ be finite $R$-modules such that $M$ is a vector bundle and $N$ is maximal Cohen-Macaulay with ${\rm Supp}_RN={\rm Spec}\,R$. Suppose the following conditions:
\begin{itemize}
    \item [(i)] $\Ext^i_R(M,N)=0$ for all $i= 1, \ldots, d$;
    \item [(ii)] $\Ext^j_R(M,M\otimes_RN^\dagger)=0$ for all $j= 1, \ldots, d$;
    \item [(iii)] $\Ext^{d+1}_R(\Tr M, \Hom_R(M,N))=0$\, $($e.g., if\, ${\rm pd}_RM^*<\infty$\, or\, ${\rm id}_R\Hom_R(M,N)<\infty$$)$.
\end{itemize} Then, $M$ is free.
\end{corollary}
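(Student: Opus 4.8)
The plan is to deduce the statement from Proposition \ref{freenesstr}. Its hypothesis (i) coincides with our condition (i), so the whole task reduces to verifying hypothesis (ii) of that proposition, namely
$$\Ext^j_R(\Tr M,\Hom_R(M,N))=0 \quad \text{for all} \quad j=1,\ldots,d+1.$$
Condition (iii) already supplies the top degree $j=d+1$, so I only need to establish this vanishing for $j=1,\ldots,d$.

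To that end, I would first invoke Theorem \ref{lmgeneralization}(i): condition (i) guarantees that $\Hom_R(M,N)$ and $M\otimes_RN^\dagger$ are both maximal Cohen-Macaulay and that $M\otimes_RN^\dagger\cong\Hom_R(M,N)^\dagger$. Since $\Hom_R(M,N)$ is maximal Cohen-Macaulay, canonical duality is a duality on it, so dualizing once more yields $(M\otimes_RN^\dagger)^\dagger\cong\Hom_R(M,N)$.

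The key move is to apply Lemma \ref{nonfreelemma} not to $N$ but to the maximal Cohen-Macaulay module $M\otimes_RN^\dagger$. Because $M$ is a vector bundle, it is locally free---hence locally totally reflexive---on the punctured spectrum and $\NF(M)\subseteq\{\mathfrak{m}\}$, so the non-free-locus condition $\NF(M)\cap\NF(M\otimes_RN^\dagger)\subseteq\{\mathfrak{m}\}$ holds automatically. Lemma \ref{nonfreelemma} then gives, for $j=1,\ldots,d$,
$$\Ext^j_R(M,M\otimes_RN^\dagger)^\vee\cong\Ext^{d+1-j}_R(\Tr M,(M\otimes_RN^\dagger)^\dagger)\cong\Ext^{d+1-j}_R(\Tr M,\Hom_R(M,N)).$$
By condition (ii) we have $\Ext^j_R(M,M\otimes_RN^\dagger)=0$ for $j=1,\ldots,d$, so the left-hand side vanishes, and hence so does the right-hand side; since $d+1-j$ ranges over $1,\ldots,d$, this is exactly $\Ext^i_R(\Tr M,\Hom_R(M,N))=0$ for $i=1,\ldots,d$. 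Adjoining the case $i=d+1$ from (iii) completes hypothesis (ii) of Proposition \ref{freenesstr}, and that proposition yields the freeness of $M$.

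Finally, for the parenthetical sufficient conditions in (iii): if $\id_R\Hom_R(M,N)<\infty$, then since $\Hom_R(M,N)$ is maximal Cohen-Macaulay, Bass's theorem forces $\id_R\Hom_R(M,N)=d$, whence $\Ext^{d+1}_R(\Tr M,\Hom_R(M,N))=0$; if instead $\pd_RM^*<\infty$, then the four-term exact sequence $0\to M^*\to F^*\to G^*\to\Tr M\to 0$ arising from a presentation $G\to F\to M\to 0$ shows $\pd_R\Tr M<\infty$, so $\pd_R\Tr M\leq d$ by Auslander-Buchsbaum and again $\Ext^{d+1}_R(\Tr M,-)=0$. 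The only genuine subtlety---and the main obstacle---is recognizing that Lemma \ref{nonfreelemma} must be fed $M\otimes_RN^\dagger$ in place of $N$, and then correctly identifying its canonical dual with $\Hom_R(M,N)$ via the double-dual property of maximal Cohen-Macaulay modules; this is precisely what converts the hypothesis-(ii) Ext-vanishing into the $\Tr M$-vanishing demanded by Proposition \ref{freenesstr}, while the vector-bundle assumption on $M$ makes all hypotheses of Lemma \ref{nonfreelemma} survive the substitution.
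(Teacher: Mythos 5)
Your proof is correct, and it follows essentially the same route as the paper: Theorem \ref{lmgeneralization}$(i)$ to make $\Hom_R(M,N)$ and $M\otimes_RN^\dagger$ maximal Cohen-Macaulay, Lemma \ref{nonfreelemma} applied to the pair $(M,\, M\otimes_RN^\dagger)$ with the canonical dual identified as $(M\otimes_RN^\dagger)^\dagger\cong\Hom_R(M,N)^{\dagger\dagger}\cong\Hom_R(M,N)$, and then Proposition \ref{freenesstr}. The one point where you genuinely diverge is the verification of the non-free-locus hypothesis of Lemma \ref{nonfreelemma}: the paper proves the inclusion $\NF(M)\subseteq\NF(M\otimes_RN^\dagger)$ via Lemma \ref{freetensor}, and this is where it uses the hypothesis ${\rm Supp}_RN={\rm Spec}\,R$ (together with $N\cong N^{\dagger\dagger}$), whereas you simply observe that $\NF(M)\cap\NF(M\otimes_RN^\dagger)\subseteq\NF(M)\subseteq\{\mathfrak{m}\}$ holds trivially because $M$ is a vector bundle. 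Your observation is valid, since the lemma only asks for the intersection to sit inside $\{\mathfrak{m}\}$, and it shows that the full-support hypothesis is not really needed for the duality step; its only remaining role is to guarantee $N\neq 0$, which is implicitly required when Proposition \ref{freenesstr} feeds $N^\dagger$ into Lemma \ref{trlemma} (with $N=0$ all the Ext hypotheses hold vacuously and the conclusion fails). Your verification of the parenthetical sufficient conditions in (iii) (Bass's theorem for the finite-injective-dimension case, and the four-term exact sequence $0\to M^*\to F^*\to G^*\to\Tr M\to 0$ plus Auslander--Buchsbaum for the finite-projective-dimension case) is also correct and is an extra that the paper leaves unsaid.
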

\begin{proof} First, we claim that if ${\mathfrak p}\in {\rm Spec}\,R$ is such that the $R_{\mathfrak p}$-module $M_{\mathfrak p}\otimes_{R_{\mathfrak p}}{N_{\mathfrak p}^\dagger}$ is free, then $M_{\mathfrak p}$ is free. We may assume $M_{\mathfrak p}\neq 0$. Since $N\cong (N^{\dagger})^{\dagger}$ and ${\rm Supp}_RN={\rm Spec}\,R$, we must have ${N_{\mathfrak p}^\dagger}\neq 0$. By Lemma
\ref{freetensor}, $M_{\mathfrak p}$ is free.
This proves that
$\NF(M)\subseteq \NF(M\otimes_RN^\dagger)$. Hence, $$\NF(M)\cap \NF(M\otimes_RN^\dagger)=\NF(M)\subseteq\{\mathfrak{m}\}.$$ By condition $(i)$ and Theorem \ref{lmgeneralization}$(i)$, the $R$-modules $M\otimes_RN^\dagger\cong\Hom_R(M,N)^\dagger$ and $\Hom_R(M,N)$ are maximal Cohen-Macaulay. Now we are in a position to apply Lemma \ref{nonfreelemma} to get $$\Ext^j_R(M,M\otimes_RN^\dagger)^\vee\cong\Ext^{d+1-j}_R(\Tr M,\Hom_R(M,N)) \quad \mbox{for \,all} \quad j=1, \ldots, d.$$ Therefore, using $(ii)$ we deduce that $$\Ext^k_R(\Tr M,\Hom_R(M,N))=0 \quad \mbox{for \,all} \quad k=1, \ldots, d.$$ The result now follows by Proposition \ref{freenesstr}.
\end{proof}

\begin{example}\rm Not every maximal Cohen-Macaulay module $N$ over a Cohen-Macaulay (or even a hypersurface) local ring $(R, {\mathfrak m})$
must satisfy ${\rm Supp}_RN={\rm Spec}\,R$. Perhaps the easiest example is $R={\bf k}[[x, y]]/(xy)$, where ${\bf k}$ is a field, and $N=R/yR$, which is maximal Cohen-Macaulay. Now let ${\mathfrak p}=xR\in {\rm Spec}\,R$. Then $N_{{\mathfrak p}}=0$. Precisely, ${\rm Supp}_RN=\{{\mathfrak m}, yR\}$.
\end{example}

To close the section, by applying Corollary \ref{NFlocus} with $N=\omega_R$ (which satisfies ${\rm Supp}_R\omega_R={\rm Spec}\,R$, since ${\rm End}_R(\omega_R)\cong R$), we get the following freeness characterization.

\begin{corollary}\label{mcmvectorbundle}
Let $R$ be a Cohen-Macaulay local ring of dimension $d\geq1$ with a canonical module. Let $M$ be a maximal Cohen-Macaulay $R$-module which is a vector bundle. Suppose the following conditions:
\begin{itemize}
    \item [(i)] $\Ext^i_R(M, M)=0$ for all $i= 1, \ldots, d$;
    \item [(ii)] $\Ext^{d+1}_R(\Tr M,M^\dagger)=0$\, $($e.g., if\, ${\rm pd}_RM^*<\infty$\, or\, ${\rm id}_RM ^{\dagger}<\infty$$)$.
\end{itemize} Then, $M$ is free.
\end{corollary}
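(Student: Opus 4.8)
The plan is to deduce the statement directly from Corollary \ref{NFlocus} by specializing $N=\omega_R$. First I would check that $\omega_R$ fulfills the standing hypotheses of that corollary: it is maximal Cohen-Macaulay, and its support is all of $\Spec R$ precisely because ${\rm End}_R(\omega_R)\cong R$, as already recorded in the sentence preceding the statement. The next step is the key computation of the relevant duals. With $N=\omega_R$ one has $N^\dagger=\omega_R^\dagger=\Hom_R(\omega_R,\omega_R)\cong R$, so that $M\otimes_R N^\dagger\cong M\otimes_R R\cong M$, while $\Hom_R(M,N)=\Hom_R(M,\omega_R)=M^\dagger$. These two identifications are what make the specialization collapse onto the present hypotheses.

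With these identities in hand, I would match the three conditions of Corollary \ref{NFlocus} against the data of the current statement. Its condition (i), namely $\Ext^i_R(M,\omega_R)=0$ for $i=1,\ldots,d$, holds automatically since $M$ is maximal Cohen-Macaulay: this is exactly the standard characterization of maximal Cohen-Macaulayness via vanishing of $\Ext$ into the canonical module recalled right after Corollary \ref{MCMtensorequivalence}. Under $M\otimes_R N^\dagger\cong M$, condition (ii) of Corollary \ref{NFlocus} becomes $\Ext^j_R(M,M)=0$ for $j=1,\ldots,d$, which is precisely hypothesis (i) here. Under $\Hom_R(M,N)=M^\dagger$, its condition (iii) becomes $\Ext^{d+1}_R(\Tr M,M^\dagger)=0$, which is hypothesis (ii); moreover the parenthetical sufficient conditions $\pd_RM^*<\infty$ or $\id_RM^\dagger<\infty$ correspond exactly under the same identification of $\Hom_R(M,N)$ with $M^\dagger$. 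Thus all hypotheses of Corollary \ref{NFlocus} are satisfied and that result yields that $M$ is free.

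Because $M$ is a vector bundle over a local ring, it is automatically locally free on the punctured spectrum, so there is no extra local hypothesis to verify; the argument is a clean specialization and I do not anticipate a genuine obstacle, the substantive content having already been supplied in the proof of Corollary \ref{NFlocus}. The only points needing a moment's care are the two canonical-module identities $\omega_R^\dagger\cong R$ and $\Supp_R\omega_R=\Spec R$, together with the observation that maximal Cohen-Macaulayness of $M$ grants condition (i) of Corollary \ref{NFlocus} for free. As a self-contained alternative that avoids quoting Corollary \ref{NFlocus}, one can instead invoke Lemma \ref{nonfreelemma} with $N=M$ (legitimate since the vector bundle $M$ has $\NF(M)\subseteq\{\mathfrak{m}\}$ and is locally totally reflexive on the punctured spectrum) to obtain $\Ext^j_R(M,M)^\vee\cong\Ext^{d+1-j}_R(\Tr M,M^\dagger)$ for $j=1,\ldots,d$; hypothesis (i) then forces $\Ext^i_R(\Tr M,M^\dagger)=0$ for $i=1,\ldots,d$, and combined with hypothesis (ii) for $i=d+1$ one applies Corollary \ref{trfreenesscriterion} to conclude freeness.
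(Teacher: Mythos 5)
Your proposal is correct and matches the paper's own proof, which is exactly the one-line specialization of Corollary \ref{NFlocus} to $N=\omega_R$ using $\omega_R^\dagger\cong R$, $\Hom_R(M,\omega_R)=M^\dagger$, $\Supp_R\omega_R=\Spec R$, and the automatic vanishing $\Ext^i_R(M,\omega_R)=0$ for $i=1,\ldots,d$ from maximal Cohen-Macaulayness. Your ``alternative'' route via Lemma \ref{nonfreelemma} with $N=M$ and Corollary \ref{trfreenesscriterion} is also valid, but it is just the proof of Corollary \ref{NFlocus} unwound in this special case, so it is not genuinely different.
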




\section{Applications to the Auslander-Reiten conjecture and related problems}\label{conjecturessec}

In this section we present our main applications regarding the classical Auslander-Reiten conjecture and important related problems such as the Tachikawa and Huneke-Wiegand conjectures.




Following standard notation, for a local ring $(R, \mathfrak{m})$ we write $\widehat{R}$ for its ($\mathfrak{m}$-adic) completion.


\begin{definition}$($\cite[Definition 2.1, Definition 2.7]{DKT}$)$ \rm Let $(R, \mathfrak{m})$ be a local ring and $I$ be an ideal of $R$. Then $I$ is a {\it Burch ideal} if $\mathfrak{m}I\neq\mathfrak{m}(I:_R\mathfrak{m})$ (notice that this forces ${\rm depth}\,R/I=0$). Now let ${\rm depth}\,R=t$. Then $R$ is a {\it Burch ring} if there exist a maximal $\widehat{R}$-sequence $\underbar{x}=x_1,...,x_t$ in $\widehat{R}$, a regular local ring $S$ and a Burch ideal $I$ of $S$ such that $\widehat{R}/(\underbar{x})\cong S/I$.
\end{definition}

In particular, a Burch ring of depth zero (e.g., an Artinian Burch ring) is just a local ring whose completion is isomorphic to the quotient of a regular local ring by a Burch ideal; for example, from \cite{DKT} it is known that if $S=k[[x, y]]$ (where $x, y$ are formal indeterminates over a field $k$) then the rings $S/(x^2, xy^2, y^4)$,  $S/(x^4, y^4, x^3y, xy^3)$ and $S/(x^a, xy, y^b)$ (for all integers $a, b\geq 1$) are Burch. It is also worth recalling that in order to settle the Auslander-Reiten conjecture in the Cohen-Macaulay case it suffices to consider Artinian rings (see \cite[Introduction]{HSV}).

\begin{lemma}$($\cite[Theorem 1.4]{DKT}$)$\label{burchringfinitepdlemma}
Let $R$ be a Burch ring of depth $t$, and let $M$, $N$ be finite $R$-modules. Suppose there exists an integer $k \geq\max\{3, t+1\}$ such that $\Tor^R_j(M,N)=0$ for all $j=k  +t, \ldots, k +2t+1$. Then either $\pd_RM<\infty$ or $\pd_RN<\infty$.
\end{lemma}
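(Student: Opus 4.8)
The plan is to use the structure theory of Burch rings to reduce everything to the depth-zero case $R=S/I$, with $S$ regular local and $I$ a Burch ideal, and there to exploit the defining feature of such rings proved in \cite{DKT}: over a depth-zero Burch ring, every sufficiently high syzygy of a module of infinite projective dimension has the residue field $k$ as a direct summand. The whole argument is an induction on the depth $t$, whose base case carries the real content.

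First I would normalize the setting. Completion is faithfully flat, commutes with $\Tor$, and preserves projective dimension, while the hypothesis and conclusion are insensitive to it; so I may assume $R$ is complete, hence $R/(\underline{x})\cong S/I$ for some maximal $R$-regular sequence of length $t$. Next, replacing $M$ and $N$ by their $t$-th syzygies $\Omega^{t}M$ and $\Omega^{t}N$ leaves finiteness of projective dimension unchanged and merely translates the $\Tor$ window: one has $\Tor^R_{j}(\Omega^{t}M,\Omega^{t}N)\cong\Tor^R_{j+2t}(M,N)$ for $j\ge 1$, so the hypothesis becomes vanishing of $\Tor^R_{j}(\Omega^{t}M,\Omega^{t}N)$ for $j$ in the window $[\,k-t,\,k+1\,]$. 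The bound $k\ge t+1$ is exactly what guarantees that this translated window lies in positive degrees, and it is the reason this half of the hypothesis appears in the statement. After this replacement both modules have depth $\ge t=\depth R$, by the depth lemma applied to successive syzygies.

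For the inductive step assume $t\ge 1$. Since $R$, $\Omega^{t}M$, $\Omega^{t}N$ all have depth $\ge 1$, prime avoidance furnishes an element $x\in\mathfrak m$ that is a nonzerodivisor on all three, which may be chosen so that $R/xR$ is again a Burch ring, now of depth $t-1$ (descent of the Burch property under a generic nonzerodivisor, \cite{DKT}). Writing $\overline{(-)}=(-)/x(-)$, the exact sequence obtained by tensoring $0\to\Omega^{t}M\xrightarrow{x}\Omega^{t}M\to\overline{\Omega^{t}M}\to 0$ with $\Omega^{t}N$, together with the Rees change-of-rings isomorphism $\Tor^R_n(\overline{\Omega^{t}M},\Omega^{t}N)\cong\Tor^{R/xR}_n(\overline{\Omega^{t}M},\overline{\Omega^{t}N})$ (valid because $x$ is regular on $\Omega^{t}N$), shows
$$\Tor^{R/xR}_n(\overline{\Omega^{t}M},\overline{\Omega^{t}N})=0\quad\text{whenever}\quad \Tor^R_n(\Omega^{t}M,\Omega^{t}N)=\Tor^R_{n-1}(\Omega^{t}M,\Omega^{t}N)=0.$$
Thus one cut turns a window of $\ell$ consecutive vanishing $\Tor$'s based at index $a$ into a window of $\ell-1$ consecutive vanishing $\Tor$'s based at $a+1$; iterating $t$ times collapses $[\,k-t,\,k+1\,]$ (length $t+2$) to $[\,k,\,k+1\,]$ (length $2$) over the depth-zero Burch ring $R/(\underline{x})$, whose index $k\ge 3$ is guaranteed by the second half of the hypothesis. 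It therefore suffices to treat $t=0$ with $\Tor^R_k(M,N)=\Tor^R_{k+1}(M,N)=0$ for some $k\ge 3$.

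This depth-zero case is the main obstacle. Here the Burch inequality $\mathfrak m I\neq\mathfrak m(I:_R\mathfrak m)$ drives the structural theorem of \cite{DKT}: if $\pd_R L=\infty$, then $k$ is a direct summand of $\Omega^{i}L$ for every $i$ beyond an explicit small bound. Granting this, suppose for contradiction that $\pd_RM=\pd_RN=\infty$. Choosing $i$ in the allowed range gives $k\mid\Omega^{i}M$, so for each $j\ge 1$ the module $\Tor^R_{i+j}(M,N)\cong\Tor^R_j(\Omega^{i}M,N)$ has $\Tor^R_j(k,N)$ as a direct summand. Since $\pd_RN=\infty$ forces every Betti number $\dim_k\Tor^R_j(k,N)$ to be nonzero (otherwise a syzygy of $N$ would vanish, making $\pd_RN$ finite), we obtain $\Tor^R_{i+j}(M,N)\neq 0$ for all $j\ge 1$, contradicting the vanishing on the window. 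Hence one of $M,N$ has finite projective dimension. The genuinely hard work — deriving the syzygy-summand property from the Burch condition, and pinning down the index bound so that two consecutive vanishings at degree $\ge 3$ already suffice — is the delicate resolution-theoretic analysis carried out in \cite{DKT}; I would simply invoke it, together with the descent of the Burch property used in the reduction above.
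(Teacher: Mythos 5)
The paper offers no proof of this lemma at all: it is quoted as \cite[Theorem 1.4]{DKT} and used as a black box (in Proposition \ref{burchfreeness}), so the only meaningful comparison is with the proof in \cite{DKT} itself. Your skeleton does match that proof's strategy: complete, pass to $t$-th syzygies, cut down by a regular sequence to a depth-zero Burch ring, and there play the residue-field-summand property of high syzygies of an infinite-projective-dimension module against the nonvanishing of all Betti numbers of the other module. Your bookkeeping is also correct: the isomorphism $\Tor^R_j(\Omega^tM,\Omega^tN)\cong\Tor^R_{j+2t}(M,N)$ for $j\geq 1$ turns the hypothesis into vanishing on the window $[k-t,\,k+1]$, the Rees change-of-rings isomorphism plus the long exact sequence of $0\to A\xrightarrow{\ x\ }A\to A/xA\to 0$ loses exactly one index per hyperplane section, and $t$ cuts land on two consecutive vanishings in degrees $k,k+1\geq 3$ over the depth-zero ring, where the final contradiction is the right one.

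The genuine gap is in how you choose the cutting elements. You invoke ``descent of the Burch property under a generic nonzerodivisor'' and attribute it to \cite{DKT}, but no such statement is available there: Burchness in positive depth is \emph{defined} through the existence of one specific maximal regular sequence $\underline{x}$ of $\widehat{R}$ with $\widehat{R}/(\underline{x})\cong S/I$, and whether the property is independent of (let alone generic in) the choice of sequence is precisely the delicate point, not settled in \cite{DKT}. So a prime-avoidance element regular on $R$, $\Omega^tM$, $\Omega^tN$ need not be known to preserve Burchness. The repair is to cut by the defining sequence itself: $R/x_1R$ is Burch of depth $t-1$ directly from the definition (using $x_2,\dots,x_t$ and the same $S/I$), and $x_1$ is automatically regular on $\Omega^tM$ and $\Omega^tN$ because positive syzygies are submodules of free modules, hence their associated primes lie among those of $R$. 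Iterating needs one more observation, since after a cut the modules are no longer syzygies over the smaller ring: reducing the exact sequences $0\to\Omega^jM\to F_{j-1}\to\Omega^{j-1}M\to 0$ modulo $x_1,\dots,x_i$ (which stay exact in the range $j\geq i+1$) shows $\Omega^tM/(x_1,\dots,x_i)\Omega^tM$ is still a submodule of a free $R/(x_1,\dots,x_i)$-module for $i\leq t-1$, so each $x_{i+1}$ is regular on it; this is exactly enough for all $t$ cuts. With that substitution your window argument goes through verbatim. Finally, be aware that deferring the depth-zero core (the syzygy-summand theorem and its index bound) to \cite{DKT} makes your argument an assembly of that paper's main results rather than an independent proof; that is consistent with how the present paper treats the lemma, but it is worth stating explicitly.
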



\begin{proposition}\label{burchfreeness}
Let $R$ be a Cohen-Macaulay Burch ring with canonical module $\omega_R$. Let $M$ and $N$ be finite $R$-modules with $N$ maximal Cohen-Macaulay. If $\Ext^j_R(M,N)=0$ for all $j>0$, then either $M$ is free or $N$ is isomorphic to finitely many copies of $\omega_R$.
\end{proposition}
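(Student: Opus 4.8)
The plan is to combine the Tor-vanishing consequence of the Ext hypothesis (via Theorem \ref{lmgeneralization}) with the Burch-ring dichotomy of Lemma \ref{burchringfinitepdlemma}. Since $R$ is Cohen-Macaulay, $\depth R = d = \dim R =: t$, and $N$ is maximal Cohen-Macaulay. First I would apply Corollary \ref{CMtensorequivalence}$(i)$: the hypothesis $\Ext^j_R(M,N)=0$ for all $j>0$ yields that $M$ and $N^\dagger$ are Tor-independent, i.e., $\Tor^R_i(M,N^\dagger)=0$ for all $i>0$. In particular the Tor modules vanish in the range of consecutive degrees demanded by Lemma \ref{burchringfinitepdlemma} (for any suitable $k\geq\max\{3,d+1\}$). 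Applying that lemma to the pair $(M,N^\dagger)$ gives the dichotomy $\pd_R M<\infty$ or $\pd_R N^\dagger<\infty$.

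Next I would split into the two cases. In the first case, $\pd_R M<\infty$. Then Theorem \ref{lmgeneralization}$(i)$ (whose Ext hypothesis is satisfied) already gives that $M\otimes_R N^\dagger$ is maximal Cohen-Macaulay; combined with Tor-independence and Lemma \ref{pddepthformula}, the modules $M$ and $N^\dagger$ satisfy the depth formula (\ref{depthform}), so Corollary \ref{depthformula} forces $M$ to be maximal Cohen-Macaulay. A finite-projective-dimension maximal Cohen-Macaulay module over a Cohen-Macaulay local ring has $\pd_R M = \depth R - \depth M = 0$ by the Auslander-Buchsbaum formula, hence $M$ is free. This is the first alternative in the conclusion.

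In the second case, $\pd_R N^\dagger<\infty$. Here I would exploit that $N^\dagger$ is itself maximal Cohen-Macaulay (being the canonical dual of a maximal Cohen-Macaulay module), so again by Auslander-Buchsbaum $\pd_R N^\dagger=0$, i.e., $N^\dagger$ is free, say $N^\dagger\cong R^{\oplus s}$ for some $s\geq 1$. Dualizing back into $\omega_R$ and using the reflexivity $N\cong (N^\dagger)^\dagger$ for maximal Cohen-Macaulay modules, we obtain $N\cong (R^{\oplus s})^\dagger = \Hom_R(R^{\oplus s},\omega_R)\cong \omega_R^{\oplus s}$, so $N$ is a finite direct sum of copies of $\omega_R$. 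This yields the second alternative.

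The main obstacle I anticipate is verifying cleanly that the Tor-independence of $M$ and $N^\dagger$ supplies vanishing across the \emph{specific consecutive band} of degrees $j=k+t,\ldots,k+2t+1$ required by Lemma \ref{burchringfinitepdlemma}; since Tor-independence gives vanishing in \emph{all} positive degrees, this is in fact immediate, but one must confirm the constraint $k\geq\max\{3,t+1\}$ can be met (choosing, e.g., $k=\max\{3,t+1\}$). A secondary point requiring care is the passage from $\pd_R N^\dagger<\infty$ to $N^\dagger$ free: this relies on $N^\dagger$ being maximal Cohen-Macaulay, which holds because the canonical dual preserves the maximal Cohen-Macaulay property over a Cohen-Macaulay local ring with canonical module (local duality / \cite[Theorem 3.3.10]{BH}). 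Once these two points are secured, the argument is a short case analysis with no heavy computation.
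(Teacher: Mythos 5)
Your proposal has the same skeleton as the paper's proof: Corollary \ref{CMtensorequivalence}$(i)$ gives Tor-independence of $M$ and $N^\dagger$, Lemma \ref{burchringfinitepdlemma} then yields the dichotomy $\pd_RM<\infty$ or $\pd_RN^\dagger<\infty$, and your treatment of the second case (Auslander--Buchsbaum for the maximal Cohen-Macaulay module $N^\dagger$, then $N\cong N^{\dagger\dagger}\cong\omega_R^{\oplus s}$) is exactly the paper's. The only divergence is in the first case: where you run $\pd_RM<\infty$ through Theorem \ref{lmgeneralization}$(i)$, Lemma \ref{pddepthformula} and Corollary \ref{depthformula} to conclude that $M$ is maximal Cohen-Macaulay and then invoke Auslander--Buchsbaum, the paper concludes in one line from the standard fact that a module of finite projective dimension satisfies $\pd_RM=\sup\{j\geq 0 \mid \Ext^j_R(M,N)\neq 0\}$ for any nonzero finite $N$, so the hypothesis forces $\pd_RM=0$ outright. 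Your detour is correct when $d\geq 1$, but note that it has a small hole at $d=0$: Corollary \ref{depthformula} is stated only for rings of dimension $d\geq 1$, and Artinian Cohen-Macaulay Burch rings are not a vacuous case (the paper emphasizes that the Artinian case is the crucial one for the Auslander-Reiten conjecture). The fix is trivial---over an Artinian local ring every finite module is maximal Cohen-Macaulay, so Auslander--Buchsbaum still gives $\pd_RM=0$---but the argument via the top nonvanishing Ext handles all dimensions uniformly and avoids the depth-formula machinery altogether.
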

\begin{proof}
By Corollary \ref{CMtensorequivalence}$(i)$, we obtain that the modules $M$ and $N^\dagger$ are Tor-independent. Thus Lemma \ref{burchringfinitepdlemma} applies to ensure that either $\pd_RM<\infty$ or $\pd_RN^\dagger<\infty$. If $\pd_RM<\infty$, then $$\pd_RM=\sup\{j\geq 0  \mid  \Ext^j_R(M,N)\neq 0\}=0.$$ Finally, if $\pd_RN^\dagger<\infty$ then $N^\dagger$ must be free as it is maximal Cohen-Macaulay. Since $N\cong N^{\dagger\dagger}$, the assertion about the structure of $N$ follows.
\end{proof}



\begin{corollary}\label{arburchcondition}
Let $R$ be a non-Gorenstein Cohen-Macaulay Burch ring. If $M$ is a finite $R$-module such that $\Ext^j_R(M,R)=0$ for all $j>0$, then $M$ is free.
\end{corollary}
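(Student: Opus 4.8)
The plan is to deduce this directly from Proposition \ref{burchfreeness} by taking $N=R$. The only mismatch with that proposition is the presence of a canonical module, which Corollary \ref{arburchcondition} does not assume. So I would first reduce to the complete case. Passing to the $\mathfrak{m}$-adic completion $\widehat{R}$ preserves every relevant hypothesis and conclusion: $\widehat{R}$ is again Cohen-Macaulay, it is Burch by the very definition of a Burch ring (which is phrased in terms of $\widehat{R}$), and it is non-Gorenstein since the Gorenstein property is faithfully transmitted between $R$ and $\widehat{R}$. Moreover, for the finite module $M$ one has $\Ext^j_{\widehat{R}}(\widehat{M},\widehat{R})\cong \widehat{R}\otimes_R\Ext^j_R(M,R)=0$ for all $j>0$, and $M$ is free over $R$ if and only if $\widehat{M}$ is free over $\widehat{R}$ by faithfully flat descent. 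Thus I may assume $R$ is complete, and in particular that $R$ admits a canonical module $\omega_R$.

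Next I would observe that $R$, being Cohen-Macaulay, is a maximal Cohen-Macaulay module over itself, so $N=R$ is an admissible choice in Proposition \ref{burchfreeness}. Since $\Ext^j_R(M,R)=0$ for all $j>0$, that proposition yields the dichotomy: either $M$ is free, in which case we are done, or $R$ (playing the role of $N$) is isomorphic to finitely many copies of $\omega_R$, say $R\cong \omega_R^{\oplus k}$ for some integer $k\geq 1$.

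The remaining step is to rule out the second alternative. Here I would use that $R$ is local, hence indecomposable as a module over itself (its endomorphism ring $R$ has no nontrivial idempotents). If $k\geq 2$, then $R\cong \omega_R^{\oplus k}$ would exhibit $R$ as a direct sum of two nonzero submodules, a contradiction; so $k=1$ and $R\cong \omega_R$. This precisely says that $R$ is Gorenstein, contradicting the standing hypothesis that $R$ is non-Gorenstein. Therefore the second alternative is impossible and $M$ must be free.

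I do not anticipate a genuine obstacle, since the substantive work is carried by Proposition \ref{burchfreeness}; the only points requiring a little care are the routine verification that the Burch, Cohen-Macaulay, and non-Gorenstein properties together with the $\Ext$-vanishing all persist under completion, and the elementary observation that an isomorphism $R\cong\omega_R^{\oplus k}$ forces $k=1$ by indecomposability and hence characterizes Gorensteinness.
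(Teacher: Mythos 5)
Your proposal is correct and follows essentially the same route as the paper: reduce to the complete case (where a canonical module exists), apply Proposition \ref{burchfreeness} with $N=R$, and use the non-Gorenstein hypothesis to exclude the second alternative of the dichotomy. Your extra care in verifying the completion step and in deducing $R\cong\omega_R$ from $R\cong\omega_R^{\oplus k}$ via indecomposability only spells out details the paper leaves implicit.
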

\begin{proof} Notice that all the conditions present in Proposition \ref{burchfreeness} are preserved after completion, and so we can suppose $R$ admits a canonical module $\omega_R$. By taking $N=R$, we get that either $M$ or $\omega_R$ is free; but the latter is impossible since $R$ is prevented from being Gorenstein.
\end{proof}

\begin{theorem}\label{arburch}
The Auslander-Reiten conjecture $($\ref{arconjecture}$)$ holds true for Cohen-Macaulay Burch rings.
\end{theorem}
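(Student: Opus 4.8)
The plan is to reduce the Auslander-Reiten conjecture for Cohen-Macaulay Burch rings to the case already handled by Corollary \ref{arburchcondition}. Recall that the hypothesis on $M$ is that $\Ext^j_R(M,M)=\Ext^j_R(M,R)=0$ for all $j>0$, and we wish to conclude $M$ is free. The key observation is that Corollary \ref{arburchcondition} already settles the \emph{non-Gorenstein} case: if $R$ is a non-Gorenstein Cohen-Macaulay Burch ring and $\Ext^j_R(M,R)=0$ for all $j>0$, then $M$ is free, using only the vanishing of $\Ext$ against $R$ (the self-Ext vanishing is not even needed there). So the entire difficulty is concentrated in the \emph{Gorenstein} case.

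First I would reduce to an Artinian ring. As recalled in the excerpt (see the discussion following the definition of Burch ring, citing \cite[Introduction]{HSV}), to establish the Auslander-Reiten conjecture for Cohen-Macaulay rings it suffices to treat Artinian rings; moreover all the relevant hypotheses and the Burch property are preserved under completion and under quotienting by a maximal regular sequence (the latter being built into the definition of Burch ring). Thus I would pass to $\widehat{R}$ and then kill a maximal regular sequence to assume $R$ is an Artinian local Burch ring, i.e.\ (after completion) $R\cong S/I$ with $S$ regular local and $I$ a Burch ideal.

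It then remains to dispatch the Gorenstein Artinian Burch case. Here I would invoke the self-Ext vanishing $\Ext^j_R(M,M)=0$ for all $j>0$, which has not yet been used, together with Lemma \ref{burchringfinitepdlemma}. The natural move is to apply the Tor-vanishing criterion of Lemma \ref{burchringfinitepdlemma} to the pair $(M,M)$: over a Gorenstein (hence self-dual) ring the self-Ext vanishing should translate, via the duality results of Theorem \ref{lmgeneralization} and Corollary \ref{CMtensorequivalence}, into vanishing of $\Tor^R_j(M, M^{\vee})$ or of $\Tor^R_j(M,M)$ in the requisite range $j=k+t,\ldots,k+2t+1$ (with $t=0$ in the Artinian case). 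Lemma \ref{burchringfinitepdlemma} would then force $\pd_R M<\infty$, and since $R$ is Artinian Gorenstein the finite-projective-dimension module $M$ satisfying $\Ext^j_R(M,R)=0$ for all $j>0$ must be free.

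The main obstacle I anticipate is the Gorenstein case, specifically arranging the self-Ext hypothesis into the precise Tor-vanishing window demanded by Lemma \ref{burchringfinitepdlemma}. The lemma requires vanishing of $\Tor$ for a consecutive block of indices $j=k+t,\ldots,k+2t+1$ with $k\geq\max\{3,t+1\}$, whereas the Auslander-Reiten hypothesis gives $\Ext$ vanishing; bridging these requires carefully combining Gorenstein self-duality with the isomorphisms of Theorem \ref{lmgeneralization} (applied with $N=M$, so that $N^\dagger$ relates to $M$ through the canonical module $\omega_R\cong R$). Getting the index bookkeeping exactly right so that a sufficiently long run of $\Tor$ modules vanishes, and confirming that finite projective dimension plus $\Ext^j_R(M,R)=0$ indeed yields freeness over the Gorenstein Artinian ring, are the delicate points. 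The non-Gorenstein case, by contrast, is essentially immediate from Corollary \ref{arburchcondition}.
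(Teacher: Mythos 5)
Your proposal is correct, and in the Gorenstein case it takes a genuinely different route from the paper's. The paper makes the same dichotomy and handles the non-Gorenstein case exactly as you do, via Corollary \ref{arburchcondition}; but for the Gorenstein case it simply quotes a structural theorem of Dao--Kobayashi--Takahashi (\cite[Proposition 5.1]{DKT}), namely that a Gorenstein Burch ring must be a hypersurface, and then concludes by the known validity of the Auslander--Reiten conjecture for complete intersections (\cite[Theorem 4.3]{AY}). You instead reduce to the Artinian case and run the Burch Tor-rigidity result (Lemma \ref{burchringfinitepdlemma}) on the pair $(M,M^\dagger)$: over the Artinian reduction every module is maximal Cohen--Macaulay, so Corollary \ref{CMtensorequivalence}(i) with $N=M$ (valid for $d=0$ by Remark \ref{main-Art}(i)) --- or, even more directly in dimension zero, the Matlis duality isomorphism $\Tor_j^R(M,M^\vee)^\vee\cong\Ext_R^j(M,M)$ --- converts the self-Ext hypothesis into $\Tor_j^R(M,M^\dagger)=0$ for all $j>0$, which amply covers the window $j=k,k+1$ with $k\geq 3$ required when $t=0$. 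What your route buys is a proof that stays entirely inside the toolkit already developed (Theorem \ref{lmgeneralization} and \cite[Theorem 1.4]{DKT}), using neither the hypersurface classification of Gorenstein Burch rings nor the complete-intersection case of the conjecture; what the paper's route buys is brevity and the avoidance of any reduction to dimension zero.

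Two points in your sketch need to be nailed down, though both are easily fixed. First, Lemma \ref{burchringfinitepdlemma} concludes only that $\pd_RM<\infty$ \emph{or} $\pd_RM^\dagger<\infty$, not the former outright as you wrote; you must close the second branch. Over the Gorenstein Artinian reduction one has $M^\dagger\cong M^*$, and since a Gorenstein Artinian local ring is self-injective every finite module is reflexive, so $M^*$ free (forced by finite projective dimension over an Artinian ring) gives $M\cong M^{**}$ free as well. Second, the reduction to the Artinian case must be used in its relative form: the conjecture for the specific quotient $\widehat{R}/(\underline{x})$, where $\underline{x}$ is the maximal regular sequence appearing in the definition of Burch ring, implies it for $R$. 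This is indeed what the standard argument behind the remark citing \cite{HSV} provides --- the Auslander--Reiten hypotheses pass to syzygies (this uses $\Ext^j_R(M,R)=0$), the $d$-th syzygy is maximal Cohen--Macaulay, and the hypotheses then descend modulo any $\widehat{R}$-regular sequence --- and Gorensteinness is preserved throughout, but these verifications should be made explicit rather than absorbed into the citation.
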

\begin{proof}
Let $R$ be such a ring and let $M$ be a finite $R$-module satisfying the hypotheses of (\ref{arconjecture}). If $R$ is Gorenstein, then $R$ must be a hypersurface by \cite[Proposition 5.1]{DKT}, whence the result as the conjecture is true for complete intersections (see \cite[Theorem 4.3]{AY}). Finally, if $R$ is not Gorenstein, we apply Corollary \ref{arburchcondition}.
\end{proof}

For the same class of rings, we immediately derive the validity of the Tachikawa conjecture.


\begin{corollary} The Tachikawa conjecture $($\ref{tachikawanconjecture}$)$ holds true for Cohen-Macaulay Burch rings.
\end{corollary}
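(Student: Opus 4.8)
The final statement to prove is that the Tachikawa conjecture (\ref{tachikawanconjecture}) holds for Cohen-Macaulay Burch rings. The plan is to exploit the standard reduction, already noted in the motivation, that the Auslander-Reiten conjecture implies the Tachikawa conjecture over Cohen-Macaulay local rings with a canonical module. Since Theorem \ref{arburch} has just established the Auslander-Reiten conjecture for the entire class of Cohen-Macaulay Burch rings, the strategy is simply to feed the Tachikawa hypotheses into this implication. Concretely, given a Cohen-Macaulay Burch ring $R$ with canonical module $\omega_R$ satisfying $\Ext^j_R(\omega_R, R)=0$ for all $j>0$, I would set $M=\omega_R$ and aim to verify that $M$ meets the hypotheses of the Auslander-Reiten conjecture, so that Theorem \ref{arburch} forces $\omega_R$ to be free, which is precisely the assertion that $R$ is Gorenstein.

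First I would recall that the Auslander-Reiten conjecture requires the vanishing of both $\Ext^j_R(M,M)$ and $\Ext^j_R(M,R)$ for all $j>0$. The second of these is exactly the Tachikawa hypothesis with $M=\omega_R$. The key point is therefore to supply the self-Ext vanishing $\Ext^j_R(\omega_R,\omega_R)=0$ for all $j>0$. This is the standard and well-known fact that the canonical module of a Cohen-Macaulay local ring has no higher self-extensions; it follows, for instance, from the maximal Cohen-Macaulayness of $\omega_R$ together with the local duality / dualizing property, as recorded in \cite{BH}. Once both Ext conditions are in hand, Theorem \ref{arburch} applies directly to conclude that $\omega_R$ is free.

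The concluding step is to translate freeness of $\omega_R$ into the Gorenstein property. Since $R$ is Cohen-Macaulay with canonical module $\omega_R$, and since $\omega_R$ has rank one (being generically free of rank one), $\omega_R$ being free forces $\omega_R\cong R$, which is equivalent to $R$ being Gorenstein. I expect no genuine obstacle here: the entire argument is a clean deduction, and the only ingredient not already proved in the excerpt is the classical self-orthogonality $\Ext^j_R(\omega_R,\omega_R)=0$ for $j>0$. If one preferred to avoid even quoting that fact, an alternative is to invoke the already-proven Proposition \ref{burchfreeness} with $M=N=\omega_R$ (noting $\Ext^j_R(\omega_R,\omega_R)=0$), but the cleanest presentation is the direct appeal to Theorem \ref{arburch} via the Auslander-Reiten-implies-Tachikawa mechanism described in Section 1. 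The main (and only mild) subtlety is simply making explicit why the Tachikawa input, which a priori only controls $\Ext^j_R(\omega_R,R)$, suffices: it does so precisely because the companion self-Ext vanishing comes for free from the theory of canonical modules.
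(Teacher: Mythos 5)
Your main argument is correct and is exactly the paper's (unstated) proof: the Tachikawa hypothesis together with the automatic vanishing $\Ext^j_R(\omega_R,\omega_R)=0$ for $j>0$ places $M=\omega_R$ under the hypotheses of Theorem \ref{arburch}, so $\omega_R$ is free and hence $R$ is Gorenstein. One caveat on your parenthetical alternative: invoking Proposition \ref{burchfreeness} with $M=N=\omega_R$ is vacuous (the second disjunct of its conclusion holds trivially and the Tachikawa hypothesis is never used); the substitution that works there is $M=\omega_R$, $N=R$, which yields that either $\omega_R$ is free or $R\cong\omega_R^{\oplus n}$, and either alternative forces $R$ to be Gorenstein.
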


In the same spirit, we suggest the following problem (for which, later in Corollary \ref{Gor-charact-d}, we shall provide an affirmative answer in case $R$ has dimension at least $2$ and is locally Gorenstein in codimension 1).

\begin{question}\rm Let $R$ be a Cohen-Macaulay local ring of dimension $d\geq 1$ with canonical module $\omega_R$. If $\Ext^j_R(\omega_R^*,R)=0$ for all $j>0$ $($or for all $j=1, \ldots, d$$)$, must $R$ be Gorenstein?
\end{question}





Besides Theorem \ref{arburch},
another result toward the Auslander-Reiten conjecture is the following immediate consequence of Corollary \ref{mcmvectorbundle}. 


\begin{corollary}\label{ARisolatedsingularity} The Auslander-Reiten conjecture $($\ref{arconjecture}$)$ is true in case $R$ is Cohen-Macaulay of dimension $d\geq1$ with a canonical module and, in addition, $M$ is a
maximal Cohen-Macaulay vector bundle such that $\Ext^{d+1}_R(\Tr M,M^\dagger)=0$ $($this vanishing holds, e.g., if ${\rm pd}_RM^*<\infty$\, or\, ${\rm id}_RM^{\dagger}<\infty$$)$.
\end{corollary}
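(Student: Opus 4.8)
The plan is to recognize this statement as a direct specialization of Corollary \ref{mcmvectorbundle}, feeding the hypotheses of the Auslander-Reiten conjecture into that freeness criterion. Suppose $R$ is Cohen-Macaulay of dimension $d\geq 1$ with a canonical module, and let $M$ be a finite $R$-module satisfying the hypotheses of Conjecture \ref{arconjecture}, i.e.\ $\Ext^j_R(M,M)=\Ext^j_R(M,R)=0$ for all $j>0$; assume in addition, as in the statement, that $M$ is a maximal Cohen-Macaulay vector bundle with $\Ext^{d+1}_R(\Tr M,M^\dagger)=0$. The goal is to conclude that $M$ is free.

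The first step I would take is to extract from the Auslander-Reiten hypotheses exactly the vanishing needed to match condition $(i)$ of Corollary \ref{mcmvectorbundle}: since $\Ext^j_R(M,M)=0$ for \emph{all} $j>0$, in particular $\Ext^i_R(M,M)=0$ for $i=1,\ldots,d$. The second step is to observe that condition $(ii)$ of Corollary \ref{mcmvectorbundle} is precisely our standing hypothesis $\Ext^{d+1}_R(\Tr M,M^\dagger)=0$, while the remaining structural assumptions ($R$ Cohen-Macaulay of dimension $d\geq 1$ with a canonical module, and $M$ a maximal Cohen-Macaulay vector bundle) coincide verbatim with those of that corollary. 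Applying Corollary \ref{mcmvectorbundle} then yields immediately that $M$ is free. The parenthetical sufficient conditions for the vanishing $\Ext^{d+1}_R(\Tr M,M^\dagger)=0$, namely $\pd_R M^*<\infty$ or $\id_R M^\dagger<\infty$, require no further argument here, as they are already recorded within the statement of Corollary \ref{mcmvectorbundle}.

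I do not expect any genuine obstacle in this argument: all the work has been absorbed into the earlier Corollary \ref{mcmvectorbundle} (which itself rests on Proposition \ref{freenesstr}, Lemma \ref{nonfreelemma}, and Theorem \ref{lmgeneralization}$(i)$). The only point worth flagging is that the freeness conclusion never invokes the hypothesis $\Ext^j_R(M,R)=0$; it uses solely the self-vanishing $\Ext^j_R(M,M)=0$ together with the vector-bundle and Ext-on-the-transpose conditions. Thus the verification is strictly stronger than what the Auslander-Reiten conjecture demands for this class, and the proof reduces to citing Corollary \ref{mcmvectorbundle}.
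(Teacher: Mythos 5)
Your proposal is correct and follows exactly the paper's own route: the paper states this result as an ``immediate consequence of Corollary \ref{mcmvectorbundle}'', which is precisely your specialization, extracting $\Ext^i_R(M,M)=0$ for $i=1,\ldots,d$ from the Auslander-Reiten hypotheses and matching the remaining conditions verbatim. Your observation that the hypothesis $\Ext^j_R(M,R)=0$ is never used is also accurate and a fair point about the strength of the criterion.
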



Next, we record our contribution toward the Huneke-Wiegand conjecture. 



\begin{corollary}\label{mcmvectorbundle2}
The Huneke-Wiegand conjecture $($\ref{hiwconjecture}$)$ is true in case $R$ is $($possibly neither Gorenstein nor a domain$)$ reduced with a canonical module and, in addition, $\Ext^2_R(\Tr M,M^\dagger)=0$.
\end{corollary}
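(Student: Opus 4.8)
The plan is to reduce the statement to Corollary~\ref{HIWconjectureext12}, which guarantees freeness of a finite torsionfree module $M$ over a one-dimensional Cohen-Macaulay local ring with a canonical module as soon as both $\Ext^1_R(\Tr M, M^\dagger)$ and $\Ext^2_R(\Tr M, M^\dagger)$ vanish. Since the present hypotheses already include $\Ext^2_R(\Tr M, M^\dagger)=0$, the whole task is to derive the remaining vanishing $\Ext^1_R(\Tr M, M^\dagger)=0$ out of the data coming from the Huneke-Wiegand conjecture~(\ref{hiwconjecture}) --- namely $M$ torsionfree and $\Ext^1_R(M,M)=0$ --- together with the reducedness of $R$.

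First I would record the structural consequences of the hypotheses in dimension one. We are in the setting of Conjecture~\ref{hiwconjecture}, so $R$ is one-dimensional and Cohen-Macaulay, and is now assumed reduced with a canonical module. Since $\dim R=1$, the punctured spectrum consists exactly of the minimal primes of $R$; reducedness forces each localization $R_\mathfrak{p}$ at a minimal prime to be a field, whence $M_\mathfrak{p}$ is automatically free. Thus $M$ is a vector bundle, and in particular $M$ is locally totally reflexive on the punctured spectrum with $\NF(M)\subseteq\{\mathfrak{m}\}$. Moreover, a nonzero finite torsionfree module over a one-dimensional Cohen-Macaulay local ring has positive depth and is therefore maximal Cohen-Macaulay (the case $M=0$ being trivial). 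These are precisely the conditions needed to invoke Lemma~\ref{nonfreelemma} with $N=M$, since then $\NF(M)\cap\NF(M)=\NF(M)\subseteq\{\mathfrak{m}\}$.

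Applying Lemma~\ref{nonfreelemma} with $d=1$, $N=M$, and $j=1$ then yields the isomorphism
$$\Ext^1_R(M,M)^\vee\cong\Ext^1_R(\Tr M, M^\dagger).$$
The Huneke-Wiegand hypothesis $\Ext^1_R(M,M)=0$ kills the left-hand side, so $\Ext^1_R(\Tr M, M^\dagger)=0$. Together with the assumed vanishing $\Ext^2_R(\Tr M, M^\dagger)=0$, Corollary~\ref{HIWconjectureext12} now applies and gives that $M$ is free, as required.

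Once this reduction is in place the argument is a matter of feeding the correct inputs into the two cited results, so I anticipate no serious obstacle. The single conceptual point worth isolating is the role of reducedness: it is used solely to ensure that $M$ is a vector bundle, because over a one-dimensional reduced ring the punctured spectrum is a finite disjoint union of spectra of fields, which is exactly the condition $\NF(M)\subseteq\{\mathfrak{m}\}$ demanded by Lemma~\ref{nonfreelemma}. Accordingly no Gorenstein or domain hypothesis on $R$ is needed; the extra vanishing $\Ext^2_R(\Tr M, M^\dagger)=0$ compensates precisely for the loss of these assumptions by supplying the second Ext-vanishing required by Corollary~\ref{HIWconjectureext12}.
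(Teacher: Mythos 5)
Your proof is correct and takes essentially the same approach as the paper: the paper's proof observes, exactly as you do, that reducedness in dimension one makes $M$ a maximal Cohen-Macaulay vector bundle, and then cites Corollary \ref{mcmvectorbundle} with $d=1$, whose own proof (via Corollary \ref{NFlocus} with $N=\omega_R$) consists precisely of your two steps --- Lemma \ref{nonfreelemma} converting $\Ext^1_R(M,M)=0$ into $\Ext^1_R(\Tr M,M^\dagger)=0$, followed by the transpose freeness criterion. The only difference is cosmetic: you inline that argument and invoke Corollary \ref{HIWconjectureext12} directly rather than the packaged Corollary \ref{mcmvectorbundle}.
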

\begin{proof} Note that $R$, being reduced, is locally regular in codimension 0 and hence $M$ is a (maximal Cohen-Macaulay) vector bundle. Now the result follows by Corollary \ref{mcmvectorbundle} with $d=1$.
\end{proof}




We close the section by illustrating that the hypothesis $\Ext^2_R(\Tr M,M^\dagger)=0$ cannot be removed. 


\begin{example}\rm
Take $R=\mathbf{k}[[x,y]]/(xy)$, where $\mathbf{k}$ is a field, and $M=R/xR$. Then, $R$ is a one-dimensional reduced local ring, $\Tr M=M$ is torsionfree, and $M^\dagger= M^*\cong yR$, so that $$\Ext^1_R(M,M)=0 \quad \mbox{and} \quad \Ext^2_R(\Tr M,M^\dagger)\cong yR/y^2R \neq 0,$$ while $\pd_RM=\infty$.
\end{example}

\section{More applications: Regular, complete intersection, and Gorenstein properties}

In this last section, we apply some of our results to derive new criteria for a local ring $R$ to be regular, or a complete intersection, or Gorenstein. As all the converses will hold trivially, our results are in fact characterizations of such properties. We are inspired by \cite{H-L-0} (although the criteria established there work only locally on the punctured spectrum). Moreover, in our investigation of regular local rings, various connections to certain classical differential conjectures will be detected. 

\subsection{Regular local rings, and some differential conjectures} Let us start by recording the following characterization of when a Gorenstein local ring is regular based on the vanishing of Ext modules.

\begin{corollary}\label{cor-gregulargeneralization}
Suppose $R$ is a Gorenstein local ring and $M$ is a maximal Cohen-Macaulay $R$-module. If $\Ext^j_R(M, M)=0$ for all $j>0$ and $M^*$ is a test module, then $R$ is regular.
\end{corollary}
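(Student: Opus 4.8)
The plan is to reduce the statement to Proposition \ref{gregulargeneralization} by specializing $N = M$, and then to upgrade the resulting $G$-regularity to genuine regularity using the Gorenstein hypothesis.

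First I would record that since $R$ is Gorenstein it admits a canonical module, namely $\omega_R \cong R$; in particular the hypotheses of Proposition \ref{gregulargeneralization} (which requires a canonical module) are available. Moreover, for the maximal Cohen-Macaulay module $M$ this isomorphism gives a natural identification $M^\dagger = \Hom_R(M,\omega_R) \cong \Hom_R(M,R) = M^*$. Consequently the assumption that $M^*$ is a test module is precisely the statement that $M^\dagger$ is a test module.

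Next I would apply Proposition \ref{gregulargeneralization} with $N = M$. Condition (i) there reads $\Ext^j_R(M,M)=0$ for all $j>0$, which holds by hypothesis, while condition (ii) asks that $N^\dagger = M^\dagger \cong M^*$ be a test module, which holds by the previous paragraph. Since $M$ is maximal Cohen-Macaulay, all the standing assumptions on $N$ are met. The proposition then yields that $M$ is free and, crucially (this is the case $M=N$ in its conclusion), that $R$ is $G$-regular.

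Finally I would promote $G$-regularity to regularity via the residue field $k$. Because $R$ is Gorenstein, every finite $R$-module has finite Gorenstein dimension; in particular $\gdim_R k < \infty$. Invoking $G$-regularity, which forces $\gdim_R M' = \pd_R M'$ for every finite $R$-module $M'$, we obtain $\pd_R k = \gdim_R k < \infty$, and hence $R$ is regular by the Auslander-Buchsbaum-Serre theorem. The only substantive input is the invocation of Proposition \ref{gregulargeneralization}; the remaining steps amount to bookkeeping around the Gorenstein identifications $\omega_R \cong R$ and $\gdim_R k < \infty$, so I anticipate no real obstacle beyond checking that the test-module condition transports correctly across the isomorphism $M^\dagger \cong M^*$.
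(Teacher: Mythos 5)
Your proposal is correct and follows essentially the same route as the paper: apply Proposition \ref{gregulargeneralization} with $N=M$ (using $M^\dagger\cong M^*$ since $\omega_R\cong R$) to get $G$-regularity, then combine with the Gorenstein hypothesis to conclude regularity. The only difference is cosmetic: where the paper cites \cite[Proposition 1.8(1)]{T} for the final step, you supply its standard proof ($\gdim_R k<\infty$ plus $\gdim=\pd$ gives $\pd_R k<\infty$, so $R$ is regular by Auslander--Buchsbaum--Serre), which is perfectly valid.
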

\begin{proof} Applying Proposition \ref{gregulargeneralization} with $N=M$, we get that $R$ is G-regular. Since in addition $R$ is a Gorenstein local ring, it must be regular by \cite[Proposition 1.8(1)]{T}. The converse is clear since any finite free module over a regular local ring is a test module.
\end{proof}

Now our central goal is to establish (co)homological criteria for the regularity of certain local ${\bf k}$-algebras $R$, where we fix ${\bf k}$ as being a field of characteristic zero, admitting a universally finite differential module $\Omega_{R/{\bf k}}=\Omega_{R}$ (see \cite{K} for the theory). This is the case if, for instance, $R$ is either 
\begin{equation}\label{two-classes}
{\bf k}[\underline{x}]_{\mathfrak q}/I \ \ \ ({\mathfrak q}\in {\rm Spec}\,{\bf k}[\underline{x}])\quad \mbox{or} \quad {\bf k}[[\underline{x}]]/I\end{equation} where $I$ is a proper ideal and $\underline{x}=x_1, \ldots, x_m$ are indeterminates over ${\bf k}$ (see \cite[p.\,147]{HuS}). In the first situation, $\Omega_{R}$ is just the module of K\"ahler differentials of $R$ over ${\bf k}$.

The module $\Omega_{R}$ is finite and carries important properties (which will be freely used in the next proofs) such as the following ones. 

\begin{itemize}
    \item [(a)] For any $R$-module $N$, there is an isomorphism ${\rm Hom}_R(\Omega_{R}, N)\cong {\rm Der}_{\bf k}(R, N)$, where the latter is the module formed by the ${\bf k}$-derivations of $R$ with values in $N$. When $N=R$ we retrieve the classical derivation module ${\rm Der}_{\bf k}(R, R)={\rm Der}_{\bf k}(R)={\rm Der}(R)$ of the ${\bf k}$-algebra $R$. Note $\Omega_R^*\cong {\rm Der}(R)$ and, by taking $N=\omega_R$ (the canonical module of $R$), we get $$\Omega_R^{\dagger}\cong {\rm Der}(R, \omega_R).$$

    \item [(b)] Let $R$ be as in (\ref{two-classes}). Then $\Omega_{R}$ is free if and only if $R$ is regular. For a proof of this in case $R={\bf k}[\underline{x}]_{\mathfrak q}/I$ (resp. $R={\bf k}[[\underline{x}]]/I$), see \cite[Theorem 7.2]{K} (resp. \cite[Theorem 14.1]{K}). Note in particular that ${\rm Der}(R)$ is free if $R$ is regular.
   \end{itemize}
   
Our first results on differential and derivation modules deal with the case where, for every prime ideal ${\mathfrak Q}$ of ${\bf k}[\underline{x}]_{\mathfrak q}$ (resp. ${\bf k}[[\underline{x}]]$) containing $I$ and of height at most a suitable integer $n$, the localization $I_{\mathfrak Q}$ is radical and generated by a regular sequence.


\begin{corollary}\label{best-reg} Let $R$ be as in $($\ref{two-classes}$)$. Assume $R$ is Cohen-Macaulay of dimension $d\geq 2$ and locally a reduced complete intersection in codimension $n$, for an integer $n$ such that $1\leq n\leq d-1$. Assume that $\Omega_{R}$ is reflexive. Suppose any one of the following sets of conditions:  \begin{itemize}
    \item [(i)] $\Ext^j_R({\rm Der}(R), R)=0$ for all $j=1, \ldots, d$, and $\Ext^i_R(\Omega_{R}, \Omega_{R})=0$ for all $i=n, \ldots, d-1$;
    
    
    \item [(ii)] $\Ext^j_R(\Omega_{R}, R)=0$ for all $j=1, \ldots, d-2$ $($if $d\geq 3$$)$, $\Ext^i_R({\rm Der}(R), {\rm Der}(R))=0$ for all $i=1, \ldots, d$, and $\Ext^k_R(\Omega_{R}, {\rm End}_R({\rm Der}(R)))=0$ for all $k=n, \ldots, d-1$.
    
    \end{itemize} Then, $R$ is regular.
\end{corollary}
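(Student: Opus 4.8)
The plan is to deduce the regularity of $R$ from property (b): it suffices to prove that $\Omega_R$ is free. In both cases I will obtain this freeness by a single application of Theorem \ref{acstgeneralization} with $M=\Omega_R$, choosing $N=R$ under hypotheses $(i)$ and $N={\rm Der}(R)=\Omega_R^*$ under hypotheses $(ii)$. Throughout I use property (a), namely $\Omega_R^*\cong {\rm Der}(R)$, together with the assumed reflexivity of $\Omega_R$, which gives ${\rm Der}(R)^*\cong\Omega_R^{**}\cong\Omega_R$.

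First I would check the two hypotheses of Theorem \ref{acstgeneralization} common to both cases. For its condition $(i)$, let $\mathfrak{p}\in\X^n(R)$; then $R_\mathfrak{p}$ is a reduced complete intersection, say $R_\mathfrak{p}\cong S_\mathfrak{Q}/I_\mathfrak{Q}$ with $S_\mathfrak{Q}$ regular local, and since localization commutes with the universally finite differential module one has $(\Omega_R)_\mathfrak{p}\cong\Omega_{R_\mathfrak{p}}$. As ${\bf k}$ has characteristic zero, $R_\mathfrak{p}$ is generically smooth, so the conormal (second fundamental) sequence $0\to I_\mathfrak{Q}/I_\mathfrak{Q}^2\to \Omega_{S_\mathfrak{Q}}\otimes R_\mathfrak{p}\to\Omega_{R_\mathfrak{p}}\to 0$ is short exact with both left-hand terms $R_\mathfrak{p}$-free; hence $\pd_{R_\mathfrak{p}}\Omega_{R_\mathfrak{p}}\leq 1<\infty$, so $\Omega_R$ is locally of finite projective dimension on $\X^n(R)$. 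For the $(S_2)$ requirement in its condition $(ii)$, note that $R$ is locally a complete intersection, hence Gorenstein, in codimension $n\geq 1$; since $R$ is moreover Cohen-Macaulay, the standard characterization over a ring that is $(S_2)$ and Gorenstein in codimension one shows that the reflexive module $\Omega_R$ satisfies $(S_2)$.

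Under hypotheses $(i)$ I take $N=R$, which is maximal Cohen-Macaulay. Condition $(iii)$ of Theorem \ref{acstgeneralization} reads $\Ext^j_R(\Omega_R^*,R)=\Ext^j_R({\rm Der}(R),R)=0$ for $j=1,\ldots,d$, which is the first assumption. For condition $(iv)$, $\Hom_R(\Omega_R^*,R)=\Hom_R({\rm Der}(R),R)={\rm Der}(R)^*\cong\Omega_R$ by reflexivity, so it becomes $\Ext^i_R(\Omega_R,\Omega_R)=0$ for $i=n,\ldots,d-1$, the second assumption. Theorem \ref{acstgeneralization} then yields that $\Omega_R$ is free.

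Under hypotheses $(ii)$ I take $N={\rm Der}(R)=\Omega_R^*$, and the crux is to show it is maximal Cohen-Macaulay. Being a dual module, $\depth_R{\rm Der}(R)\geq\min\{2,d\}$; if $d=2$ this already gives maximal Cohen-Macaulayness (which explains why no vanishing of $\Ext^j_R(\Omega_R,R)$ is required in this case), whereas if $d\geq 3$ one applies Remark \ref{MCMdual}$(i)$ to $\Hom_R(\Omega_R,R)={\rm Der}(R)$, the hypothesis $\Ext^j_R(\Omega_R,R)=0$ for $j=1,\ldots,d-2=\max\{1,d-2\}$ furnishing the conclusion. With $M=\Omega_R$ and $N={\rm Der}(R)$, condition $(iii)$ becomes $\Ext^j_R(\Omega_R^*,{\rm Der}(R))=\Ext^j_R({\rm Der}(R),{\rm Der}(R))=0$ for $j=1,\ldots,d$, and condition $(iv)$ becomes $\Ext^i_R(\Omega_R,\Hom_R({\rm Der}(R),{\rm Der}(R)))=\Ext^i_R(\Omega_R,{\rm End}_R({\rm Der}(R)))=0$ for $i=n,\ldots,d-1$; these are exactly the remaining assumptions. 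Once more Theorem \ref{acstgeneralization} yields $\Omega_R$ free, and property (b) completes the proof. The main obstacle I anticipate is the verification of the local finite-projective-dimension hypothesis: it is the single point where the geometric input (the reduced complete intersection structure together with generic smoothness in characteristic zero) must be converted into the homological statement $\pd_{R_\mathfrak{p}}\Omega_{R_\mathfrak{p}}<\infty$. By comparison, the matching of the Ext/Hom conditions, while notationally delicate, is then essentially bookkeeping via properties (a)--(b) and the reflexivity of $\Omega_R$.
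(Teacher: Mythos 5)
Your proposal is correct and follows essentially the same route as the paper's own proof: both verify the hypotheses of Theorem \ref{acstgeneralization} for $M=\Omega_R$ (local finite projective dimension via the locally split-free conormal sequence of the reduced complete intersection localizations, and $(S_2)$ from reflexivity plus Gorensteinness in codimension one), then apply it with $N=R$ in case $(i)$ and $N={\rm Der}(R)$ in case $(ii)$, using Remark \ref{MCMdual}$(i)$ (respectively, the depth of a dual when $d=2$) to get that ${\rm Der}(R)$ is maximal Cohen-Macaulay. Your explicit bookkeeping of conditions $(iii)$--$(iv)$ via $\Omega_R^*\cong{\rm Der}(R)$ and ${\rm Der}(R)^*\cong\Omega_R$ is exactly what the paper does implicitly, so there is nothing to correct.
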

\begin{proof} As $R$ is locally a reduced complete intersection in codimension $n$, we can localize at any given ${\mathfrak p}={\mathfrak P}/I \in X^n(R)$ the conormal exact sequence of $R$ in order to get a short exact sequence
$$\xymatrix@=1em{0\ar[r] & I_{\mathfrak P}/I^{2}_{\mathfrak P}\ar[r] & F\ar[r] & \Omega_{R_{\mathfrak p}}\ar[r] & 0}$$ where $F$ is a finite free $R_{\mathfrak p}$-module. But, under the present hypotheses, it is well-known that $I_{\mathfrak P}/I^2_{\mathfrak P}$ is a free $R_{\mathfrak p}$-module. Therefore, ${\rm pd}_{R_{\mathfrak p}}\Omega_{R_{\mathfrak p}}\leq 1$ for all  ${\mathfrak p} \in X^n(R)$. Moreover notice that, as in particular $R$ is locally Gorenstein in codimension 1, the condition of $\Omega_{R}$ being reflexive is equivalent to $\Omega_{R}$ satisfying $(S_2)$. If we suppose $(i)$, we can apply Theorem \ref{acstgeneralization} with $M=\Omega_{R}$ and $N=R$ in order to get that $\Omega_{R}$ is free, i.e., $R$ is regular.

Now, suppose $(ii)$ holds. We claim that ${\rm Der}(R)$ is maximal Cohen-Macaulay. This property holds if $d=2$ simply because ${\rm Der}(R)$ is a dual over a Cohen-Macaulay ring. Now assume $d\geq 3$. By Remark \ref{MCMdual}$(i)$, the vanishing of $\Ext^j_R(\Omega_R, R)$ whenever $j=1, \ldots, d-2={\rm max}\{1, d-2\}$ forces the module $\Omega_R^*\cong {\rm Der}(R)$ to be maximal Cohen-Macaulay, as needed. We then apply Theorem \ref{acstgeneralization} with $M=\Omega_{R}$ and $N={\rm Der}(R)$ to conclude that $R$ is regular. \end{proof}

\begin{corollary}\label{best-reg2} Let $R$ be as in $($\ref{two-classes}$)$. Assume $R$ is Cohen-Macaulay of dimension $d\geq 2$ which is locally regular in codimension $n$, for an integer $n$ such that $1\leq n\leq d-1$. Suppose $\Ext^j_R({\rm Der}(R)^*, R)=0$ for all $j=1, \ldots, d$ and  $\Ext^i_R({\rm Der}(R), {\rm Der}(R))=0$ for all $i=n, \ldots, d-1$. In addition, assume  that either $\Omega_{R}$ is reflexive or $n\geq 2$. Then, $R$ is regular.
\end{corollary}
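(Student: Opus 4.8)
The plan is to feed the module $M={\rm Der}(R)=\Omega_R^*$ (together with $N=R$) into the freeness criterion of Theorem \ref{acstgeneralization}, thereby obtaining that ${\rm Der}(R)$ is free, and then to transfer this freeness back to $\Omega_R$ by a reflexivity argument so that property (b) forces $R$ to be regular. The first preliminary step I would record is that \emph{$R$ is a normal domain}: being Cohen--Macaulay it satisfies $(S_2)$, and being locally regular in codimension $n\geq 1$ it satisfies $(R_1)$, so Serre's criterion gives normality, and a normal local ring is a normal domain. A standard consequence is that the algebraic dual of any finite $R$-module is reflexive; in particular ${\rm Der}(R)=\Omega_R^*$ is reflexive, hence satisfies $(S_2)$ and fulfills ${\rm Der}(R)^{**}\cong {\rm Der}(R)$.

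Next I would verify the four hypotheses of Theorem \ref{acstgeneralization} for $M={\rm Der}(R)$ and $N=R$. Condition $(i)$ holds because $R_{\mathfrak p}$ is regular for every ${\mathfrak p}\in \X^n(R)$, so every finite $R_{\mathfrak p}$-module has finite projective dimension there. Condition $(ii)$ holds since ${\rm Der}(R)$ is reflexive (hence $(S_2)$) and $R$ is maximal Cohen--Macaulay over itself. Condition $(iii)$ is precisely the hypothesis $\Ext^j_R({\rm Der}(R)^*,R)=0$ for $j=1,\dots,d$. Finally, condition $(iv)$ follows from the identification $\Hom_R({\rm Der}(R)^*,R)\cong {\rm Der}(R)^{**}\cong {\rm Der}(R)$ (using the reflexivity secured above) together with the hypothesis $\Ext^i_R({\rm Der}(R),{\rm Der}(R))=0$ for $i=n,\dots,d-1$. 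The theorem then yields that ${\rm Der}(R)$ is free, and hence that $\Omega_R^{**}\cong {\rm Der}(R)^*$ is free as well.

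It remains to pass from $\Omega_R^{**}$ free to $\Omega_R$ free, which by property (b) is equivalent to $R$ being regular. If $\Omega_R$ is reflexive this is immediate, since then $\Omega_R\cong \Omega_R^{**}$. In the remaining case $n\geq 2$ the task is to \emph{show that $\Omega_R$ is reflexive}: regularity in codimension $n\geq 2$ makes $\Omega_R$ locally free (hence locally reflexive) on $\X^1(R)$, and because $R$ is in particular Gorenstein in codimension one, reflexivity of $\Omega_R$ is equivalent to $\Omega_R$ satisfying $(S_2)$, by the reflexivity criterion already invoked in the proof of Theorem \ref{acstgeneralization} (see \cite[Proposition 1.4.1(b)]{BH}). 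Once reflexivity is in hand one gets $\Omega_R\cong \Omega_R^{**}$, which is free, so $R$ is regular; the converse is clear, as $\Omega_R$, ${\rm Der}(R)$ and their duals are all free over a regular local ring.

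The step I expect to be the main obstacle is exactly the verification of the $(S_2)$ property for the differential module $\Omega_R$ in the case $n\geq 2$. It is tempting to avoid this by re-applying Theorem \ref{acstgeneralization} with $M=\Omega_R$ once ${\rm Der}(R)$ is known to be free (condition $(iii)$ then becomes automatic), but condition $(iv)$ would require $\Ext^i_R(\Omega_R,R)=0$ for $i=n,\dots,d-1$, which is not available; moreover one cannot simply invoke the Zariski--Lipman implication ``${\rm Der}(R)$ free $\Rightarrow R$ regular'' since that is itself conjectural. Thus the crux is to extract the depth estimate $\depth (\Omega_R)_{\mathfrak p}\geq \min\{2,\height{\mathfrak p}\}$ at the remaining primes (those of height exceeding $n$, where $R$ is singular) from the regularity-in-codimension-two hypothesis, together with the routine bookkeeping of the identifications $\Omega_R^{**}\cong {\rm Der}(R)^*$ and ${\rm Der}(R)^{**}\cong {\rm Der}(R)$ used throughout.
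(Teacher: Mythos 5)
Your reduction to Theorem \ref{acstgeneralization} is correct and is exactly the paper's route: $R$ is normal by Serre's criterion (Cohen--Macaulay plus $(R_1)$, since $n\geq 1$), so ${\rm Der}(R)=\Omega_R^*$ is reflexive and satisfies $(S_2)$; conditions (i)--(iv) of the theorem hold for $M={\rm Der}(R)$ and $N=R$ (with (iv) using ${\rm Der}(R)^{**}\cong{\rm Der}(R)$); hence ${\rm Der}(R)$ is free. The case where $\Omega_R$ is assumed reflexive is then also settled exactly as in the paper, via $\Omega_R\cong\Omega_R^{**}\cong{\rm Der}(R)^*$.

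The gap is the case $n\geq 2$. You propose to prove that $\Omega_R$ satisfies $(S_2)$ (hence is reflexive, hence isomorphic to the free module ${\rm Der}(R)^*$), but you concede you cannot carry out the required depth estimate, and indeed it is not ``routine bookkeeping'': since reflexivity of $\Omega_R$ would immediately give regularity, establishing $(S_2)$ for $\Omega_R$ at the singular primes is equivalent to the conclusion itself, and nothing in the hypotheses supplies it directly. The paper closes this case with precisely the input you explicitly (and mistakenly) ruled out: you write that one cannot invoke the implication ``${\rm Der}(R)$ free $\Rightarrow R$ regular'' because it is conjectural, but that is only true in general. When $R$ is locally regular in codimension $n\geq 2$, the singular locus has codimension at least $n+1\geq 3$, and in that range the Zariski--Lipman conjecture is a theorem of Flenner \cite[Corollary, p.\,318]{F}. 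So the correct finish is: ${\rm Der}(R)$ is free by the first part, ${\rm codim}\,{\rm Sing}\,R\geq 3$, hence $R$ is regular by Flenner's result. Without this (or an equivalent substitute) your argument does not prove the $n\geq 2$ case.
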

\begin{proof} Note ${\rm Der}(R)$ is locally free on $X^n(R)$. Moreover, being a dual over a normal domain, ${\rm Der}(R)$ satisfies $(S_2)$ and (equivalently) must be reflexive. By Theorem \ref{acstgeneralization} with $M={\rm Der}(R)$ and $N=R$, we conclude that ${\rm Der}(R)$ is free. Now, in the case of $\Omega_{R}$ being reflexive, $R$ must be regular. Finally, if $n\geq 2$ (i.e., $R$ is locally regular in codimension at least 2) then we have ${\rm codim}\,{\rm Sing}\,R\geq 3$, where ${\rm Sing}\,R$ stands for the Zariski-closed set formed by the ${\mathfrak p}\in {\rm Spec}\,R$ such that $R_{\mathfrak p}$ is not regular; now the regularity of $R$ follows by \cite[Corollary, p.\,318]{F}.
\end{proof}



    


The next regularity characterizations deal in particular with the transpose ${\rm Tr}\,\Omega_{R}$, which, for $R$ as in (\ref{two-classes}), can be realized as the cokernel of the map of free $R$-modules induced by the Jacobian matrix of some (any) minimal generating set of $I$. 

\begin{corollary}\label{reg-Tr}  Let $R$ be as in $($\ref{two-classes}$)$. Assume that $R$ is Cohen-Macaulay of dimension $d\geq1$. Suppose any one of the following sets of conditions:
\begin{itemize}
\item [(i)] $\Ext^j_R(\Omega_{R}, R)=0$ for all $j=1, \ldots, d$, and $\Ext^i_R(\Tr \Omega_{R}, {\rm Der}(R))=0$ for all $i=1, \ldots, d+1$;

\item [(ii)] $\Omega_{R}$ is maximal Cohen-Macaulay, and $\Ext^i_R(\Tr \Omega_{R}, {\rm Der}(R, \omega_R))=0$ for all $i=1, \ldots, d+1$.
\end{itemize}
Then, $R$ is regular.
\end{corollary}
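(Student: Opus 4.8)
The plan is to prove, under either hypothesis, that the universally finite differential module $\Omega_R$ is free; by property (b) recorded above this is equivalent to the regularity of $R$. Both arguments rest on translating the derivation modules occurring in the hypotheses into (co)homological data about $\Omega_R$ itself, using the two canonical identifications of property (a), namely $\Omega_R^* = \Hom_R(\Omega_R, R) \cong {\rm Der}(R)$ and $\Omega_R^\dagger = \Hom_R(\Omega_R, \omega_R) \cong {\rm Der}(R, \omega_R)$. I first observe that $R$, being a Cohen-Macaulay quotient of a regular local ring, possesses a canonical module $\omega_R$, so that the freeness criteria of Section \ref{freenesssec} are at our disposal.

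For case (i), I would apply Proposition \ref{freenesstr} with $M = \Omega_R$ and $N = R$. Here $N = R$ is maximal Cohen-Macaulay because $R$ is Cohen-Macaulay, so the standing assumptions of that proposition hold. Its condition (i), that $\Ext^i_R(\Omega_R, R) = 0$ for $i = 1, \ldots, d$, is precisely the first vanishing assumed. For its condition (ii), the identification $\Hom_R(\Omega_R, R) \cong {\rm Der}(R)$ gives $\Ext^j_R(\Tr \Omega_R, \Hom_R(\Omega_R, R)) \cong \Ext^j_R(\Tr \Omega_R, {\rm Der}(R))$, which vanishes for $j = 1, \ldots, d+1$ by the second assumption. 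Proposition \ref{freenesstr} then yields that $\Omega_R$ is free.

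For case (ii), I would instead invoke Corollary \ref{trfreenesscriterion} with $M = \Omega_R$, which is legitimate since $\Omega_R$ is assumed maximal Cohen-Macaulay, exactly the hypothesis of that corollary. The identification $\Omega_R^\dagger \cong {\rm Der}(R, \omega_R)$ converts the assumed vanishing $\Ext^i_R(\Tr \Omega_R, {\rm Der}(R, \omega_R)) = 0$ into $\Ext^i_R(\Tr \Omega_R, \Omega_R^\dagger) = 0$ for $i = 1, \ldots, d+1$, which is precisely what Corollary \ref{trfreenesscriterion} requires; hence $\Omega_R$ is free. In both cases property (b) then forces $R$ to be regular. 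I do not anticipate a genuine obstacle here: the argument is essentially a matter of matching the hypotheses against the freeness criteria after applying the two dualization identities, the real content being packaged already in Proposition \ref{freenesstr}, Corollary \ref{trfreenesscriterion}, and the differential-geometric fact (property (b)) that the freeness of $\Omega_R$ detects regularity.
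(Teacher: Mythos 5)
Your proposal is correct and follows exactly the paper's own proof: case (i) is Proposition \ref{freenesstr} applied with $M=\Omega_R$, $N=R$, and case (ii) is Corollary \ref{trfreenesscriterion} applied with $M=\Omega_R$, with the identifications $\Omega_R^*\cong{\rm Der}(R)$ and $\Omega_R^\dagger\cong{\rm Der}(R,\omega_R)$ translating the hypotheses, and property (b) converting freeness of $\Omega_R$ into regularity of $R$. The extra detail you supply (existence of $\omega_R$, maximal Cohen-Macaulayness of $N=R$) is a faithful unpacking of what the paper leaves implicit.
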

\begin{proof} For $(i)$ (resp. $(ii)$), apply Proposition \ref{freenesstr} with $N=R$ (resp. Corollary \ref{trfreenesscriterion}) and take $M=\Omega_R$.
\end{proof}


The corollary below deals with isolated singularities. For completeness, one instance where this property takes place (with $R$ as in (\ref{two-classes})) is when $\Omega_{R}$ is maximal Cohen-Macaulay and $R$ has only finitely many indecomposable non-isomorphic maximal Cohen-Macaulay modules of multiplicity at most $\nu \cdot \varepsilon$, where $\nu$ and $\varepsilon$ denote respectively the embedding dimension and the multiplicity of the local ring $R$ (see \cite[Corollary 4]{H-L-0}).

\begin{corollary}\label{Omega-Der}  Let $R$ be as in $($\ref{two-classes}$)$. Assume that $R$ is a Cohen-Macaulay isolated singularity of dimension $d\geq 1$. Suppose any one of the following sets of conditions:
\begin{itemize}
\item [(i)] $\Ext^j_R(\Omega_{R}, R)=\Ext^j_R(\Omega_{R}, \Omega_{R}\otimes_R\omega_R)=0$ for all $j=1, \ldots, d$, and $$\Ext^{d+1}_R(\Tr \Omega_{R}, {\rm Der}(R))=0  \quad (\mbox{e.g., if} \, \, \,  {\rm pd}_R{\rm Der}(R)<\infty \, \, \, \mbox{or} \, \, \,  {\rm id}_R{\rm Der}(R)<\infty).$$


\item [(ii)] $\Omega_{R}$ is maximal Cohen-Macaulay, $\Ext^j_R(\Omega_{R}, \Omega_{R})=0$ for all $j=1, \ldots, d$, and $$\Ext^{d+1}_R(\Tr \Omega_{R}, {\rm Der}(R, \omega_R))=0 \quad (\mbox{e.g., if} \, \, \,  {\rm pd}_R{\rm Der}(R)<\infty \, \, \, \mbox{or} \, \, \,  {\rm id}_R{\rm Der}(R, \omega_R)<\infty).$$
\end{itemize}
Then, $R$ is regular.
\end{corollary}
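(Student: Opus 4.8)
The plan is to reduce both cases to the freeness criteria already established, applied to the single module $M=\Omega_{R}$. The common starting point is that the isolated-singularity hypothesis makes $\Omega_{R}$ a vector bundle: for every prime $\mathfrak{p}\neq\mathfrak{m}$ the local ring $R_{\mathfrak{p}}$ is regular, so by compatibility of the universally finite differential module with localization we have $(\Omega_{R})_{\mathfrak{p}}\cong\Omega_{R_{\mathfrak{p}}}$, which is free by the local form of property (b). Recall also from property (a) that ${\rm Der}(R)\cong\Omega_{R}^*$ and ${\rm Der}(R,\omega_R)\cong\Omega_{R}^{\dagger}$. Once we have shown that $\Omega_{R}$ is free, property (b) gives that $R$ is regular; since the converse is immediate (over a regular ring $\Omega_{R}$ is free), these statements are genuine characterizations.

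For case $(i)$ I would invoke Corollary \ref{NFlocus} with $M=\Omega_{R}$ and $N=R$. Here $N=R$ is maximal Cohen-Macaulay with $\Supp_R R=\Spec R$, and $N^{\dagger}=\Hom_R(R,\omega_R)\cong\omega_R$, so that $M\otimes_R N^{\dagger}\cong\Omega_{R}\otimes_R\omega_R$ and $\Hom_R(M,N)=\Omega_{R}^*\cong{\rm Der}(R)$. Under this dictionary the three hypotheses of Corollary \ref{NFlocus} become exactly the assumed vanishings $\Ext^j_R(\Omega_{R},R)=0$, $\Ext^j_R(\Omega_{R},\Omega_{R}\otimes_R\omega_R)=0$ for $j=1,\ldots,d$, and $\Ext^{d+1}_R(\Tr\Omega_{R},{\rm Der}(R))=0$; the parenthetical sufficient conditions $\pd_R{\rm Der}(R)<\infty$ or $\id_R{\rm Der}(R)<\infty$ coincide with those recorded in Corollary \ref{NFlocus}, since ${\rm Der}(R)=\Omega_{R}^*=\Hom_R(\Omega_{R},R)$. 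As $\Omega_{R}$ is a vector bundle, Corollary \ref{NFlocus} yields that $\Omega_{R}$ is free, whence $R$ is regular.

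For case $(ii)$ the hypotheses are tailored to Corollary \ref{mcmvectorbundle} applied to $M=\Omega_{R}$. Indeed, $\Omega_{R}$ is maximal Cohen-Macaulay by assumption and a vector bundle by the remark above; condition (i) of that corollary is the assumed $\Ext^j_R(\Omega_{R},\Omega_{R})=0$ for $j=1,\ldots,d$; and condition (ii) is $\Ext^{d+1}_R(\Tr\Omega_{R},\Omega_{R}^{\dagger})=0$, which is precisely $\Ext^{d+1}_R(\Tr\Omega_{R},{\rm Der}(R,\omega_R))=0$ via the isomorphism $\Omega_{R}^{\dagger}\cong{\rm Der}(R,\omega_R)$ (and the sufficient conditions $\pd_R{\rm Der}(R)<\infty$ or $\id_R{\rm Der}(R,\omega_R)<\infty$ again match those in Corollary \ref{mcmvectorbundle}). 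Thus Corollary \ref{mcmvectorbundle} gives that $\Omega_{R}$ is free and $R$ is regular.

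The only genuinely delicate point is the vector-bundle claim, namely that the isolated-singularity hypothesis transfers to local freeness of $\Omega_{R}$ on the punctured spectrum; this rests on the localization behavior of the universally finite differential module together with the Jacobian criterion in characteristic zero (property (b)). Everything else is a direct translation of the listed vanishings into the inputs of Corollaries \ref{NFlocus} and \ref{mcmvectorbundle}, so no additional homological work is required.
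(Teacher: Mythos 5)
Your proof is correct and takes exactly the paper's route: the paper likewise observes that the isolated-singularity hypothesis makes $\Omega_{R}$ a vector bundle and then applies Corollary \ref{NFlocus} with $M=\Omega_{R}$, $N=R$ for case $(i)$ and Corollary \ref{mcmvectorbundle} with $M=\Omega_{R}$ for case $(ii)$. Your extra detail — the localization argument for the vector-bundle claim and the dictionary $\Hom_R(\Omega_{R},R)\cong{\rm Der}(R)$, $R^\dagger\cong\omega_R$, $\Omega_{R}^{\dagger}\cong{\rm Der}(R,\omega_R)$ — just makes explicit what the paper's two-line proof leaves implicit.
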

\begin{proof} Because $R$ is an isolated singularity, $\Omega_{R}$ is a vector bundle. In the case (i) (resp. (ii)), use Corollary \ref{NFlocus} with $N=R$ (resp. Corollary \ref{mcmvectorbundle}) and take $M=\Omega_R$.
\end{proof}

 
From Corollary \ref{reg-Tr} and Corollary \ref{Omega-Der}, we record an immediate byproduct in the Gorenstein case.

\begin{corollary}\label{reg-Tr-2}  Let $R$ be as in $($\ref{two-classes}$)$. Assume that $R$ is Gorenstein of dimension $d\geq1$ and that $\Omega_{R}$ is maximal Cohen-Macaulay. Suppose any one of the following sets of conditions:
\begin{itemize}
\item [(i)] $\Ext^j_R(\Tr \Omega_{R}, {\rm Der}(R))=0$ for all $j=1, \ldots, d+1$;
\item [(ii)] $R$ is an isolated singularity, $\Ext^j_R(\Omega_{R}, \Omega_{R})=0$ for all $j=1, \ldots, d$, and $\Ext^{d+1}_R(\Tr \Omega_{R}, {\rm Der}(R))=0$.
\end{itemize}
Then, $R$ is regular.
\end{corollary}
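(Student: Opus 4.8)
The plan is to obtain this statement as a direct specialization of Corollary \ref{reg-Tr} and Corollary \ref{Omega-Der} to the Gorenstein setting, the only real observation being that the canonical and ordinary derivation modules coincide here. Indeed, since $R$ is Gorenstein we have $\omega_R\cong R$, so property (a) above yields the canonical identifications ${\rm Der}(R,\omega_R)\cong {\rm Der}(R,R)={\rm Der}(R)$ and $\Omega_R^{\dagger}=\Hom_R(\Omega_R,\omega_R)\cong\Hom_R(\Omega_R,R)=\Omega_R^{*}\cong{\rm Der}(R)$. Throughout, I would use these isomorphisms to translate every hypothesis stated in terms of ${\rm Der}(R)$ into the corresponding hypothesis involving ${\rm Der}(R,\omega_R)=\Omega_R^{\dagger}$ demanded by the parent results.

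For part (i), I would invoke Corollary \ref{reg-Tr}(ii) with $M=\Omega_R$. That result requires $\Omega_R$ to be maximal Cohen-Macaulay---which is exactly one of the standing hypotheses here---together with $\Ext^i_R(\Tr\Omega_R,{\rm Der}(R,\omega_R))=0$ for all $i=1,\ldots,d+1$. Under the identification ${\rm Der}(R,\omega_R)\cong{\rm Der}(R)$, this last vanishing is precisely condition (i), so Corollary \ref{reg-Tr}(ii) applies verbatim and forces $\Omega_R$ to be free; by property (b) this means $R$ is regular.

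For part (ii), I would instead appeal to Corollary \ref{Omega-Der}(ii), again with $M=\Omega_R$. The hypotheses of that corollary are that $R$ be a Cohen-Macaulay isolated singularity, that $\Omega_R$ be maximal Cohen-Macaulay, that $\Ext^j_R(\Omega_R,\Omega_R)=0$ for all $j=1,\ldots,d$, and that $\Ext^{d+1}_R(\Tr\Omega_R,{\rm Der}(R,\omega_R))=0$. All four are granted by the hypotheses of part (ii) once one replaces ${\rm Der}(R,\omega_R)$ by ${\rm Der}(R)$ as above, and the conclusion that $R$ is regular follows immediately.

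There is no genuine obstacle here: the content is entirely in recognizing that the Gorenstein hypothesis makes the canonical dual of $\Omega_R$ agree with the ordinary derivation module, after which both parts reduce mechanically to the already-established criteria. I would also remark, as announced in the introduction to this section, that the converse in each case is trivial, since if $R$ is regular then $\Omega_R$ is free, whence $\Tr\Omega_R=0$ and all the displayed Ext modules vanish automatically; thus the statement is in fact a characterization of regularity.
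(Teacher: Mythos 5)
Your proof is correct and takes exactly the paper's route: the paper records this corollary as an immediate byproduct of Corollary \ref{reg-Tr}$(ii)$ and Corollary \ref{Omega-Der}$(ii)$, using precisely the identification ${\rm Der}(R,\omega_R)\cong{\rm Der}(R)$ (equivalently $\Omega_R^{\dagger}\cong\Omega_R^{*}$) that follows from $\omega_R\cong R$ in the Gorenstein case.
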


Next, we consider some long-standing differential conjectures in characteristic zero. We let $R$ be as in (\ref{two-classes}). The first one is the so-called {\it strong Zariski-Lipman conjecture}. Recall that the classical Zariski-Lipman conjecture asserts that $R$ must be regular if ${\rm Der}(R)$ is free (see \cite{Lip}). As a homological version of it, the Herzog-Vasconcelos conjecture predicts that ${\rm Der}(R)$ is free if  ${\rm pd}_R{\rm Der}(R)<\infty$. Combining them yields the following harder problem (see \cite{Her} and \cite{VC} for further details and references on these conjectures).

\begin{conjecture}[Zariski-Lipman-Herzog-Vasconcelos]\label{SZL}
Let $R$ be as in $($\ref{two-classes}$)$. If  ${\rm pd}_R{\rm Der}(R)<\infty$, then $R$ is regular.
\end{conjecture}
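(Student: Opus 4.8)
The plan is to reduce Conjecture \ref{SZL} to one of the regularity characterizations established earlier in this section, specifically to Corollary \ref{reg-Tr} or Corollary \ref{Omega-Der}, by exploiting the finite projective dimension hypothesis on $\mathrm{Der}(R)$. First I would invoke the general principle that regularity can be checked after localization on the punctured spectrum together with a codimension argument, so it suffices to handle the case where $R$ is an isolated singularity (the regularity at non-maximal primes would follow by Noetherian induction on dimension, since $\mathrm{pd}_{R_{\mathfrak p}}\mathrm{Der}(R_{\mathfrak p})<\infty$ is inherited by localization and $\mathrm{Der}(R)_{\mathfrak p}\cong \mathrm{Der}(R_{\mathfrak p})$ in this setting). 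Once $R$ is an isolated singularity, $\Omega_R$ is a vector bundle, which opens the door to the freeness criteria of Subsection on main freeness criteria.

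The key observation is that $\mathrm{pd}_R\mathrm{Der}(R)<\infty$ is exactly the kind of finiteness hypothesis that makes the auxiliary $\mathrm{Ext}$-vanishing conditions in Corollary \ref{reg-Tr} and Corollary \ref{Omega-Der} automatic. Indeed, both corollaries list ``$\mathrm{pd}_R\mathrm{Der}(R)<\infty$'' as a sufficient condition for the critical vanishing $\Ext^{d+1}_R(\Tr\Omega_R,\mathrm{Der}(R))=0$, and since $\mathrm{Der}(R)\cong\Omega_R^*$, finite projective dimension of the dual gives us control over the relevant higher Ext modules. So the strategy is: verify that the remaining hypotheses of (say) Corollary \ref{Omega-Der}(i) hold, namely $\Ext^j_R(\Omega_R,R)=\Ext^j_R(\Omega_R,\Omega_R\otimes_R\omega_R)=0$ for $j=1,\ldots,d$. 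Here one would use that over an isolated singularity with $\mathrm{pd}_R\mathrm{Der}(R)<\infty$, the module $\mathrm{Der}(R)$ is locally free on the punctured spectrum and of finite projective dimension, which by standard local-duality or Auslander-Buchsbaum arguments forces these Ext modules to have finite length and then to vanish in the stated range.

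The hard part will be controlling the homology of $\mathrm{Der}(R)=\Omega_R^*$ rather than of $\Omega_R$ itself, since the hypothesis is placed on the derivation module while the freeness criteria are naturally phrased in terms of $\Omega_R$ and its transpose. The bridge is the reflexivity (or $(S_2)$-property) of $\Omega_R$: if $R$ is normal, or if we can verify $\Omega_R$ is reflexive, then $\mathrm{Der}(R)$ and $\Omega_R$ determine each other's homological behaviour through the biduality map, and the finite projective dimension of $\mathrm{Der}(R)$ transfers to the vanishing statements required for $\Omega_R$. The most delicate point is ensuring that the maximal Cohen-Macaulay or vector-bundle hypotheses on $\Omega_R$ are met; for this one would either assume $R$ is Cohen-Macaulay with $\Omega_R$ maximal Cohen-Macaulay at the outset, or extract this from $\mathrm{pd}_R\mathrm{Der}(R)<\infty$ via the Auslander-Buchsbaum formula applied to $\mathrm{Der}(R)$, which pins down its depth and hence, by duality, forces $\Omega_R$ into the required Cohen-Macaulay class.

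I expect the genuine obstacle to be that Conjecture \ref{SZL} is \emph{stated as a conjecture}, not a theorem, so a complete unconditional proof is almost certainly not what the authors supply here; rather, the forthcoming argument will most plausibly establish the conjecture under additional hypotheses (e.g.\ $R$ Cohen-Macaulay with $\Omega_R$ reflexive or maximal Cohen-Macaulay, or $R$ an isolated singularity), deducing the needed Ext-vanishing from $\mathrm{pd}_R\mathrm{Der}(R)<\infty$ and then applying Corollary \ref{reg-Tr} or Corollary \ref{Omega-Der} to conclude regularity. The truly hard, and likely unresolved, case is when $\Omega_R$ fails to be reflexive or maximal Cohen-Macaulay, where the transfer of homological information between $\mathrm{Der}(R)$ and $\Omega_R$ breaks down and the freeness criteria no longer apply directly.
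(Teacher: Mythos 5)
There is nothing in the paper for your proposal to match: the statement you were asked to prove is Conjecture \ref{SZL}, which the paper records as a long-standing open problem and does not prove. What the paper does prove is the (unlabelled) corollary immediately following it, which confirms the conjecture only under substantial additional hypotheses --- $R$ Cohen-Macaulay, and one of the conditions (i)--(iii) there (Ext-vanishing or maximal Cohen-Macaulayness of $\Omega_R$, with (i) and (iii) also assuming an isolated singularity). Your final paragraph anticipates exactly this, so your meta-assessment is correct. As an attempted proof of the conjecture itself, however, your sketch contains a genuine gap.

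The gap is the central claim of your second paragraph: that $\pd_R\mathrm{Der}(R)<\infty$ renders the Ext-vanishing hypotheses of Corollary \ref{reg-Tr} or Corollary \ref{Omega-Der} automatic. It does not. Finite projective dimension of $\mathrm{Der}(R)\cong\Omega_R^*$ accounts only for the single vanishing $\Ext^{d+1}_R(\Tr\Omega_R,\mathrm{Der}(R))=0$ (the parenthetical ``e.g.''\ condition in those corollaries); the remaining hypotheses --- $\Ext^j_R(\Omega_R,R)=\Ext^j_R(\Omega_R,\Omega_R\otimes_R\omega_R)=0$ for $j=1,\ldots,d$ in Corollary \ref{Omega-Der}(i), or $\Omega_R$ maximal Cohen-Macaulay with $\Ext^j_R(\Omega_R,\Omega_R)=0$ in (ii) --- are independent assumptions. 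On an isolated singularity these Ext modules do have finite length for $j\geq1$, but finite length is far from vanishing, and no local-duality or Auslander-Buchsbaum argument extracts the vanishing from $\pd_R\mathrm{Der}(R)<\infty$ alone; if one did, the conjecture would be a theorem. Note also that the paper's partial result opens with a step absent from your sketch: by Herzog's theorem, a Cohen-Macaulay $R$ as in (\ref{two-classes}) with $\pd_R\mathrm{Der}(R)<\infty$ is already Gorenstein; under the extra hypotheses (i) or (ii) the depth-formula machinery (Corollary \ref{depthformula2}) then makes $\Omega_R$ maximal Cohen-Macaulay, whence $\mathrm{Der}(R)=\Omega_R^*$ is maximal Cohen-Macaulay of finite projective dimension, hence free, hence $\Omega_R$ is free by reflexivity; case (iii) is Corollary \ref{reg-Tr-2}(ii). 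Finally, your proposed reduction to isolated singularities by Noetherian induction is itself delicate: for $R=\mathbf{k}[[\underline{x}]]/I$ a localization $R_{\mathfrak{p}}$ is no longer of the form (\ref{two-classes}), and the compatibility of the universally finite differential module with such localizations is not automatic, so the induction cannot be run verbatim.
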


According to \cite[Satz 9.1]{S-S}, if ${\rm pd}_R{\rm Der}(R)<\infty$ then $R$ is necessarily a normal domain, which in particular settles the case of curves. It is also worth mentioning that the (classical) Zariski-Lipman conjecture remains open essentially for surfaces, the critical unsettled case being when $R$ is a non-graded two-dimensional Gorenstein ring. As to the Herzog-Vasconcelos conjecture, the critical open situations are when $R$ is a hypersurface ring or ${\rm depth}\,R=3$. We refer to \cite[Section 5]{VC}.


In the result below, the dimension of the local ring $R$ is denoted $d$ and is assumed to be positive. Recall, for item $(iii)$, that a {\it Gorenstein module} over $R$ is a maximal Cohen–Macaulay $R$-module of finite injective dimension.  

\begin{corollary} The strong Zariski-Lipman conjecture $($\ref{SZL}$)$ is true in case $R$ is a Cohen–Macaulay isolated singularity and, in addition, any one of the following situations holds:
\begin{itemize}
\item [(i)] $\cidim_R\Omega_{R}<\infty$ $($or alternatively $\cidim_RN^{*}<\infty$$)$, $\Ext^j_R(\Omega_{R}, N)=0$ for all $j=1, \ldots, d$ and ${\rm Tor}^R_i(\Omega_{R}, N^{*})=0$ for all $i\gg 0$, for some maximal Cohen-Macaulay $R$-module $N$;

\item [(ii)] $($$R$ not necessarily an isolated singularity.$)$ $\Ext^j_R(\Omega_{R}, N)=0$ for all $j>0$ and $\widehat{{\rm Tor}}^R_i(\Omega_{R}, N^{*})=0$ for all $i\in {\mathbb Z}$, for some maximal Cohen-Macaulay $R$-module $N$;



\item [(iii)] $\Omega_{R}$ is maximal Cohen-Macaulay and $\Ext^j_R(\Omega_{R}, \Omega_{R})=0$ for all $j=1, \ldots, d$ $($e.g., if\, $\Omega_{R}$ is a Gorenstein module$)$.
\end{itemize}
\end{corollary}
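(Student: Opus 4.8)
The plan is to show in every case that $\Omega_{R}$ is free, which by property (b) is equivalent to the regularity of $R$. Throughout I may use the standing hypothesis of Conjecture \ref{SZL}, namely $\pd_R{\rm Der}(R)=\pd_R\Omega_R^*<\infty$ (recall ${\rm Der}(R)\cong\Omega_R^*$ by property (a)); I also record that by \cite{S-S} this hypothesis already forces $R$ to be a normal domain, which guarantees the reflexivity statements used below. For parts (i) and (ii) the common first move is to recognize the stated hypotheses as those of Corollary \ref{depthformula2}, applied with $M=\Omega_{R}$ and the given maximal Cohen-Macaulay module $N$: part (i) is designed to feed into Corollary \ref{depthformula2}(i) (using the isolated-singularity assumption together with the finiteness of $\cidim_R\Omega_{R}$), while part (ii) feeds into Corollary \ref{depthformula2}(ii), where the very definition of $\widehat{\Tor}^R_i(\Omega_{R},N^{*})$ presupposes $\gdim_R\Omega_{R}<\infty$. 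In both cases the conclusion I extract is that $\Omega_{R}$ is maximal Cohen-Macaulay.

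The crucial upgrade from ``maximal Cohen-Macaulay'' to ``free'' is where the finiteness of homological dimension meets the standing hypothesis, and it is the conceptual heart of the argument. Once $\Omega_{R}$ is maximal Cohen-Macaulay, the formula \eqref{CI-dim-form} gives $\gdim_R\Omega_{R}=\cidim_R\Omega_{R}=\depth R-\depth\Omega_{R}=0$; that is, $\Omega_{R}$ is totally reflexive. Since total reflexivity is self-dual (it is symmetric in a module and its algebraic dual), the module ${\rm Der}(R)=\Omega_R^*$ is totally reflexive as well, so $\gdim_R{\rm Der}(R)=0$. Now I invoke the standing hypothesis: because $\gdim_R(-)\le\pd_R(-)$ with equality whenever the projective dimension is finite, the assumption $\pd_R{\rm Der}(R)<\infty$ forces $\pd_R{\rm Der}(R)=\gdim_R{\rm Der}(R)=0$, i.e.\ ${\rm Der}(R)$ is free. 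Dualizing and using $\Omega_{R}\cong\Omega_R^{**}$ then yields that $\Omega_{R}\cong{\rm Der}(R)^*$ is free, whence $R$ is regular.

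Part (iii) follows a shorter and independent route, since it is precisely the hypothesis set of Corollary \ref{Omega-Der}(ii). Here the isolated-singularity assumption makes $\Omega_{R}$ a vector bundle, the vanishing $\Ext^j_R(\Omega_{R},\Omega_{R})=0$ for $j=1,\ldots,d$ is given directly (and holds automatically when $\Omega_{R}$ is a Gorenstein module, since then $\Ext^{j}_R(\Omega_{R},\Omega_{R})=0$ for all $j>0$), and the standing hypothesis $\pd_R{\rm Der}(R)<\infty$ supplies the remaining input $\Ext^{d+1}_R(\Tr\Omega_{R},{\rm Der}(R,\omega_R))=0$ through the ``e.g.'' clause of that corollary, where ${\rm Der}(R,\omega_R)\cong\Omega_R^{\dagger}$ by property (a). Corollary \ref{Omega-Der}(ii) then delivers regularity at once.

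The main obstacle I anticipate is not a single deep step but the careful bookkeeping needed to apply Corollary \ref{depthformula2} in parts (i) and (ii). One must reconcile the algebraic duals $N^{*}$ appearing in the stated hypotheses with the canonical duals $N^{\dagger}$ used in that corollary, confirm that $\widehat{\Tor}^R_i(\Omega_{R},N^{*})$ is well defined (equivalently, that $\Omega_{R}$ has finite Gorenstein dimension) so that part (ii) genuinely lands in Corollary \ref{depthformula2}(ii), and handle the alternative finiteness option $\cidim_R N^{*}<\infty$ in part (i), for which the totally reflexive argument must be routed through ${\rm Der}(R)$ rather than through $\Omega_{R}$ directly. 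Granting these verifications, the decisive and clean insight remains the same: finiteness of $\gdim_R\Omega_{R}$ (or $\cidim_R\Omega_{R}$) \emph{plus} maximal Cohen-Macaulayness forces total reflexivity, and total reflexivity is exactly the leverage that converts the one-sided finiteness $\pd_R{\rm Der}(R)<\infty$ into freeness of $\Omega_{R}$.
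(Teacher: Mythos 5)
Your part (iii) is fine (indeed, applying Corollary \ref{Omega-Der}(ii) directly, with the standing hypothesis $\pd_R{\rm Der}(R)<\infty$ feeding its ``e.g.'' clause, is a legitimate shortcut past the paper's use of Corollary \ref{reg-Tr-2}(ii)). But parts (i) and (ii) have a genuine gap, located exactly at the points you flag as ``bookkeeping'' and then defer with ``granting these verifications.'' These are not routine verifications. Corollary \ref{depthformula2} is stated with the \emph{canonical} dual: its hypotheses are ${\rm Tor}^R_i(M,N^\dagger)=0$, $\widehat{{\rm Tor}}^R_i(M,N^\dagger)=0$, and $\cidim_RN^\dagger<\infty$, whereas the statement you are proving has all of these with the \emph{algebraic} dual $N^*$. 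Over a Cohen-Macaulay non-Gorenstein ring, $N^*$ and $N^\dagger$ are genuinely different modules, so as written you cannot apply Corollary \ref{depthformula2} at all. Moreover, your ``crucial upgrade'' to freeness needs $\gdim_R\Omega_{R}<\infty$, which you only have in the first alternative of (i); for the alternative $\cidim_RN^*<\infty$, your proposal to ``route through ${\rm Der}(R)$'' does not close the gap: $\pd_R{\rm Der}(R)<\infty$ does give $\gdim_R{\rm Der}(R)<\infty$, but to conclude freeness via Auslander-Buchsbaum you would need ${\rm Der}(R)=\Omega_R^*$ to be maximal Cohen-Macaulay, and the algebraic dual of a maximal Cohen-Macaulay module need not be maximal Cohen-Macaulay over a non-Gorenstein Cohen-Macaulay ring (compare Remark \ref{MCMdual}(i), which requires Ext-vanishing hypotheses you do not have).

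The missing idea --- the paper's first move, which dissolves every one of these difficulties at once --- is Herzog's theorem (\cite[Theorem 4.5]{Her}): since $R$ is Cohen-Macaulay and $\pd_R{\rm Der}(R)<\infty$, the ring $R$ is already \emph{Gorenstein}. Your appeal to \cite[Satz 9.1]{S-S} yields only that $R$ is a normal domain, which is strictly weaker and insufficient here. Once Gorensteinness is in hand, $\omega_R\cong R$, so $N^\dagger\cong N^*$ and the hypotheses of (i) and (ii) become verbatim those of Corollary \ref{depthformula2}; every finite module has finite Gorenstein dimension, so the Tate homology in (ii) is well-posed; and the upgrade from maximal Cohen-Macaulay to free is immediate, since over a Gorenstein ring a maximal Cohen-Macaulay module is reflexive and its dual is again maximal Cohen-Macaulay, whence $\pd_R{\rm Der}(R)<\infty$ forces ${\rm Der}(R)$ free and then $\Omega_{R}\cong{\rm Der}(R)^*$ is free. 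Your totally-reflexive argument is a valid substitute for this last step in the cases where $\gdim_R\Omega_{R}<\infty$ is available, but it cannot replace the Gorenstein step itself.
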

\begin{proof} Since $R$ is Cohen-Macaulay and ${\rm pd}_R{\rm Der}(R)<\infty$, $R$ must be Gorenstein
(see \cite[Theorem 4.5 and the comment after the proof]{Her}). Now if $(i)$ or $(ii)$ holds,
Corollary \ref{depthformula2} ensures that the module $\Omega_R$ is maximal Cohen-Macaulay. Thus, as $R$ is Gorenstein, $\Omega_R$ is reflexive and its dual ${\rm Der}(R)$ is maximal Cohen-Macaulay as well. It follows that ${\rm Der}(R)$ is free, hence so is $\Omega_R$, as needed. Finally, for the situation $(iii)$, we apply Corollary \ref{reg-Tr-2}$(ii)$. It remains to show that $(iii)$ holds if the module $\Omega_R$ is Gorenstein. Because $\Omega_R$ is maximal Cohen-Macaulay and ${\rm id}_R\Omega_R<\infty$, we can use \cite[Exercise 3.1.24]{BH} to get $${\rm sup}\{k\geq 0 \, \mid \, \Ext^k_R(\Omega_{R}, \Omega_{R})\neq 0\}=d-{\rm depth}\,\Omega_R=0,$$ so that $\Ext^j_R(\Omega_{R}, \Omega_{R})=0$ for all $j>0$.
\end{proof}

Motivated by the strong Zariski-Lipman conjecture $($\ref{SZL}$)$ and some of the results above, we propose the following problem, which in the Gorenstein case matches the statement of $($\ref{SZL}$)$.

\begin{question}\rm Let $R$ be as in $($\ref{two-classes}$)$. Suppose $R$ is Cohen-Macaulay with canonical module $\omega_R$.  If ${\rm id}_R{\rm Der}(R)<\infty$ or ${\rm id}_R{\rm Der}(R, \omega_R)<\infty$, must $R$ be regular?
\end{question}

At this point we mention, for completeness, the following conjecture also involving derivation modules but leading to different ring-theoretic properties. The $\gdim$ case of the problem has been settled affirmatively in \cite[Proposition 4.14]{Asg} for (graded) Cohen-Macaulay rings of minimal multiplicity. 

\begin{conjecture} (\cite[Conjecture 3.12]{MN})\label{C} Let $R$ be as in $($\ref{two-classes}$)$. Assume that ${\gdim}_R{\rm Der}(R)<\infty$ $($resp. ${\cidim}_R{\rm Der}(R)<\infty$$)$. Then, $R$ is Gorenstein $($resp. a complete intersection ring$)$.
\end{conjecture}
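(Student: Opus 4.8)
The plan is to read this as the Gorenstein-dimension (respectively, complete-intersection-dimension) analogue of the strong Zariski-Lipman conjecture \eqref{SZL}, and to mimic, as far as possible, the argument of the corollary immediately preceding it. In the projective-dimension setting one combines Herzog's theorem (Cohen-Macaulay together with ${\rm pd}_R{\rm Der}(R)<\infty$ forces $R$ Gorenstein) with the reflexivity of $\Omega_R$ and the freeness of ${\rm Der}(R)$. The first task is therefore to manufacture a \emph{G-dimensional} replacement for Herzog's input, after which the remaining steps (reflexivity of $\Omega_R\cong ({\rm Der}(R))^*$ when $R$ is normal, and the equivalence ``$\Omega_R$ free $\Leftrightarrow$ $R$ regular'') can be reused essentially verbatim; note that since the desired conclusions are Gorenstein/complete intersection rather than regular, one does not need the full force of the classical Zariski-Lipman conjecture.

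First I would reduce to a local situation with good behaviour on the punctured spectrum. Both $\gdim$ and $\cidim$ localize, so $\gdim_{R_{\mathfrak p}}{\rm Der}(R_{\mathfrak p})<\infty$ (resp.\ $\cidim_{R_{\mathfrak p}}{\rm Der}(R_{\mathfrak p})<\infty$) for every $\mathfrak p$, using ${\rm Der}(R)_{\mathfrak p}\cong{\rm Der}(R_{\mathfrak p})$; moreover the Gorenstein (resp.\ complete intersection) locus is open. Hence it suffices to contradict the assumption that $R$ is local, Gorenstein (resp.\ a complete intersection) on its punctured spectrum but not at $\mathfrak m$, while still having $\gdim_R{\rm Der}(R)<\infty$ (resp.\ $\cidim_R{\rm Der}(R)<\infty$). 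In this reduced setting I would invoke the Auslander-Bridger formula $\gdim_R{\rm Der}(R)=\depth R-\depth{\rm Der}(R)$ (resp.\ the AGP formula \eqref{CI-dim-form}) to pin down $\depth{\rm Der}(R)$, using that ${\rm Der}(R)\cong\Omega_R^*$ is a dual and hence satisfies $(S_2)$.

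For the complete-intersection assertion I would then exploit the deformation-theoretic content of finite CI-dimension: a quasi-deformation $R\to R'\leftarrow Q$ with $Q\to R'$ surjective with kernel a $Q$-regular sequence and ${\rm pd}_Q({\rm Der}(R)\otimes_R R')<\infty$. The aim is to transport this finiteness, via the conormal sequence ${I}/{I^{2}}\to\Omega_S\otimes_S R\to\Omega_R\to 0$, into a statement forcing $I$ to be generated by a regular sequence, i.e.\ $R$ a complete intersection. The Gorenstein case would run in parallel, replacing finite projective dimension over the deformation by total reflexivity and using that over a Gorenstein ring every finite module has finite G-dimension.

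The hard part, and the reason this remains a conjecture, is precisely the base case that the $\pd$-version delegates to Herzog: there is no known analogue asserting that finite $\gdim$ (or $\cidim$) of ${\rm Der}(R)$ over a not-necessarily-Cohen-Macaulay local ring forces Gorensteinness, and ${\rm Der}(R)$ is not known to be a test module for the Gorenstein or complete intersection property. Worse, finite G-dimension tolerates Gorenstein-but-singular behaviour, so the reduction to an isolated singularity that makes $\Omega_R$ a vector bundle — the step driving the strong Zariski-Lipman corollary above — is unavailable here. I expect genuinely new input beyond the spectral-sequence and freeness machinery of this paper will be needed; the partial result of \cite{Asg} in the graded minimal-multiplicity case is consistent with this and suggests that multiplicity or Koszul-homological constraints on $\Omega_R$ may be the decisive extra ingredient.
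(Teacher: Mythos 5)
This statement is not a theorem of the paper: it is Conjecture \ref{C}, quoted verbatim from \cite[Conjecture 3.12]{MN}, and the paper supplies no proof of it. The only status information the authors record is that the $\gdim$ half has been settled in \cite[Proposition 4.14]{Asg} for (graded) Cohen--Macaulay rings of minimal multiplicity. Your final verdict --- that the spectral-sequence and freeness machinery of this paper cannot settle the problem and that genuinely new input is required --- is therefore exactly the right one, and it was appropriate not to claim a proof. Your diagnosis of where the obstruction sits is also accurate: the corollary you tried to mimic (on the strong Zariski--Lipman conjecture \ref{SZL}) runs on Herzog's theorem that $\pd_R{\rm Der}(R)<\infty$ together with Cohen--Macaulayness forces $R$ to be Gorenstein, and the $\gdim$/$\cidim$ analogue of that input is, for Cohen--Macaulay $R$, essentially the conjecture itself; any transplant of the $\pd$-argument is therefore circular, exactly as you say.

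Two smaller inaccuracies in your sketch are worth flagging, though they do not affect your (correct) conclusion. First, the duality runs the other way: ${\rm Der}(R)\cong\Omega_{R}^*$, so $({\rm Der}(R))^*\cong\Omega_{R}^{**}$ is the \emph{bidual} of $\Omega_{R}$; the identification $\Omega_{R}\cong({\rm Der}(R))^*$ is a hypothesis (reflexivity of $\Omega_{R}$), not something normality of $R$ hands you, and indeed the paper always assumes reflexivity of $\Omega_R$ explicitly when it is needed (e.g.\ Corollary \ref{best-reg}). Second, your reduction to the punctured spectrum is delicate for the second class in $($\ref{two-classes}$)$: a localization of $\mathbf{k}[[\underline{x}]]/I$ at a non-maximal prime is no longer of the form $($\ref{two-classes}$)$, and the compatibility ${\rm Der}(R)_{\mathfrak{p}}\cong{\rm Der}(R_{\mathfrak{p}})$ for universally finite differentials needs justification there; moreover, as you yourself observe, even granting the reduction there is no base-case theorem to land on, since finite G-dimension is perfectly compatible with Gorenstein-but-singular behaviour.
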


Another differential conjecture we wish to consider is the long-standing {\it Berger's conjecture}, raised in \cite{Ber}. We also refer to the survey \cite{BeSur} and the references suggested there.

\begin{conjecture}[Berger]\label{Berger}
Let $R$ be as in $($\ref{two-classes}$)$. Assume that $R$ is reduced and one-dimensional. If $\Omega_{R}$ is torsionfree, then $R$ is regular.
\end{conjecture}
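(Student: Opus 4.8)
The plan is to exploit the property recorded in item (b) above, by which $R$ (as in (\ref{two-classes})) is regular if and only if $\Omega_{R}$ is free; thus Berger's conjecture (\ref{Berger}) becomes a freeness statement for $\Omega_{R}$, which I would attack through the transpose-based criteria of Section \ref{freenesssec}. First I would record the structural consequences of the hypotheses. Since $R$ is one-dimensional and reduced it has no embedded primes, so $\depth R=1=\dim R$ and $R$ is Cohen-Macaulay; after completing (which preserves reducedness and one-dimensionality for $R$ as in (\ref{two-classes})) I may assume $R$ has a canonical module $\omega_R$. Torsionfreeness of $\Omega_{R}$ over the one-dimensional Cohen-Macaulay ring $R$ forces $\depth_R\Omega_{R}=1$, so $\Omega_{R}$ is maximal Cohen-Macaulay; and reducedness makes $R$ regular in codimension $0$, whence $\Omega_{R}$ is locally free on the punctured spectrum, i.e.\ a vector bundle. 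Hence $\Omega_{R}$ is precisely the kind of module to which the one-dimensional freeness criteria apply.

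With this in place, the cleanest route is Corollary \ref{HIWconjectureext12}: taking $M=\Omega_{R}$, if one can establish $\Ext^1_R(\Tr\Omega_{R},\Omega_{R}^\dagger)=\Ext^2_R(\Tr\Omega_{R},\Omega_{R}^\dagger)=0$, then $\Omega_{R}$ is free and $R$ is regular. This route is attractive because it uses only torsionfreeness of $\Omega_{R}$ together with two transpose-type Ext vanishings, and because over the (reduced, hence generically regular) ring $R$ one has $\Omega_{R}^\dagger\cong {\rm Der}(R,\omega_R)$, placing the relevant cohomology in genuinely differential terms. An alternative is Corollary \ref{mcmvectorbundle2}, our contribution toward the Huneke-Wiegand conjecture (\ref{hiwconjecture}), which would reduce matters to verifying $\Ext^1_R(\Omega_{R},\Omega_{R})=0$ and $\Ext^2_R(\Tr\Omega_{R},\Omega_{R}^\dagger)=0$. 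Either way, the proposal is to reduce Berger's conjecture to a single cohomological vanishing of the form $\Ext^i_R(\Tr\Omega_{R},\Omega_{R}^\dagger)=0$ for small $i$, thereby identifying it as a Huneke-Wiegand-type phenomenon for the differential module.

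The hard part — indeed the reason (\ref{Berger}) remains open — is exactly that these Ext-vanishings do \emph{not} follow from torsionfreeness of $\Omega_{R}$ alone: there is no a priori mechanism forcing $\Ext^1_R(\Tr\Omega_{R},\Omega_{R}^\dagger)$ or $\Ext^2_R(\Tr\Omega_{R},\Omega_{R}^\dagger)$ to vanish. I would therefore not expect an unconditional proof from the present machinery; what it can honestly deliver is a \emph{conditional} statement (Berger holds whenever the displayed vanishing is available) together with the reduction to a Huneke-Wiegand problem for $\Omega_{R}$. A natural line of attack on the obstacle is to translate torsionfreeness of $\Omega_{R}$ into vanishing of the torsion of the pairing $\Omega_{R}\otimes_R\Omega_{R}^\dagger$ and then invoke Lemma \ref{trlemma}; but making that translation unconditional, without imposing extra hypotheses on the singularity of $R$ (such as the isolated-singularity condition of Corollary \ref{Omega-Der}, or a bound on the multiplicity), appears to be the genuine difficulty.
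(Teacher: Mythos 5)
Your proposal coincides with the paper's own treatment: since (\ref{Berger}) is an open conjecture, the paper does not claim an unconditional proof but records precisely the conditional results you describe, applying Corollary \ref{HIWconjectureext12} (resp.\ Corollary \ref{mcmvectorbundle2}) with $M=\Omega_{R}$ under the hypotheses $\Ext^1_R(\Tr \Omega_{R},{\rm Der}(R,\omega_R))=\Ext^2_R(\Tr \Omega_{R},{\rm Der}(R,\omega_R))=0$ (resp.\ $\Ext^1_R(\Omega_{R},\Omega_{R})=\Ext^2_R(\Tr \Omega_{R},{\rm Der}(R,\omega_R))=0$), using exactly your identification $\Omega_{R}^\dagger\cong{\rm Der}(R,\omega_R)$. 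Your assessment that these Ext vanishings do not follow from torsionfreeness alone, so that only a conditional statement is obtainable from this machinery, is also exactly the paper's stance.
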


\begin{corollary} Berger's   conjecture $($\ref{Berger}$)$ is true in any one of the following situations:
\begin{itemize}
\item [(i)] $\Ext^1_R({\rm Tr}\,\Omega_{R}, {\rm Der}(R, \omega_{R}))=\Ext^2_R({\rm Tr}\,\Omega_{R}, {\rm Der}(R, \omega_{R}))=0$;

\item [(ii)] $\Ext^1_R(\Omega_{R}, \Omega_{R})=\Ext^2_R({\rm Tr}\,\Omega_{R}, {\rm Der}(R, \omega_{R}))=0$.
\end{itemize}
\end{corollary}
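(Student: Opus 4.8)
The plan is to reduce both cases to the one-dimensional freeness criteria already established, applied to the single module $M=\Omega_R$, and then convert freeness of $\Omega_R$ into regularity of $R$ via property (b). Throughout I would use the standing hypotheses of Berger's conjecture: $R$ is reduced and one-dimensional (hence Cohen--Macaulay), and being of the form (\ref{two-classes}) it possesses a canonical module $\omega_R$; moreover $\Omega_R$ is torsionfree, and since $R$ is one-dimensional and reduced this makes $\Omega_R$ a maximal Cohen--Macaulay vector bundle (its localizations at the minimal primes are free because those localizations are fields). The one identification that renders the Ext hypotheses usable is property (a), namely $\Omega_R^\dagger=\Hom_R(\Omega_R,\omega_R)\cong \mathrm{Der}(R,\omega_R)$.

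For case (i) I would apply Corollary \ref{HIWconjectureext12} with $M=\Omega_R$. That corollary asks that $R$ be one-dimensional Cohen--Macaulay with canonical module and $M$ torsionfree --- all in force here --- together with $\Ext^1_R(\Tr M,M^\dagger)=\Ext^2_R(\Tr M,M^\dagger)=0$. Substituting $M^\dagger=\Omega_R^\dagger\cong \mathrm{Der}(R,\omega_R)$, these two vanishings are exactly the hypotheses listed in (i). Hence Corollary \ref{HIWconjectureext12} gives that $\Omega_R$ is free, and property (b) then yields that $R$ is regular.

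For case (ii) I would instead invoke Corollary \ref{mcmvectorbundle2} with $M=\Omega_R$. That corollary confirms the Huneke--Wiegand conclusion (freeness of $M$) for a reduced $R$ with canonical module, provided $M$ is torsionfree, $\Ext^1_R(M,M)=0$, and $\Ext^2_R(\Tr M,M^\dagger)=0$. The torsionfreeness is Berger's hypothesis, the second condition is the first vanishing in (ii), and the third coincides with the remaining vanishing in (ii) after rewriting $M^\dagger\cong \mathrm{Der}(R,\omega_R)$. Thus $\Omega_R$ is again free, and $R$ is regular by property (b).

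I do not anticipate a genuine obstacle, since the real content resides in the freeness criteria Corollary \ref{HIWconjectureext12} and Corollary \ref{mcmvectorbundle2}. The only points requiring care are, first, checking that every ambient hypothesis of those corollaries is implied by Berger's setting --- in particular that $R$ is Cohen--Macaulay with a canonical module and that $\Omega_R$ qualifies as the torsionfree (equivalently maximal Cohen--Macaulay, vector bundle) module they require --- and second, translating each occurrence of $\mathrm{Der}(R,\omega_R)$ into $\Omega_R^\dagger$ through property (a) so that the stated Ext conditions align precisely with those of the invoked results.
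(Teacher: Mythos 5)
Your proposal is correct and follows exactly the paper's own proof: case (i) is Corollary \ref{HIWconjectureext12} and case (ii) is Corollary \ref{mcmvectorbundle2}, each applied with $M=\Omega_R$, using the identification $\Omega_R^\dagger\cong{\rm Der}(R,\omega_R)$ and property (b) to pass from freeness of $\Omega_R$ to regularity of $R$. The extra verifications you include (that $R$ is Cohen--Macaulay with a canonical module and that $\Omega_R$ is torsionfree, hence maximal Cohen--Macaulay) are exactly the points the paper leaves implicit.
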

\begin{proof} If $(i)$ (resp. $(ii)$) holds, we apply Corollary \ref{HIWconjectureext12} (resp. Corollary \ref{mcmvectorbundle2}) with $M=\Omega_R$.
\end{proof}

Finally notice that, because $d=1$, the condition $\Ext^2_R({\rm Tr}\,\Omega_{R}, {\rm Der}(R, \omega_{R}))=0$ is satisfied whenever ${\rm id}_R{\rm Der}(R, \omega_R)<\infty$.


\subsection{Complete intersection local rings} In this subsection, we consider local rings $R$ of the form
\begin{equation}\label{quot-of-reg} R=S/I, \quad (S, \mathfrak{M}) \, \, \mbox{a regular local ring,} 
\end{equation} where $I\subset \mathfrak{M}$ is an ideal of $S$. We are interested in characterizing when $I$ can be generated by an $S$-sequence; recall the classical fact that (since ${\rm pd}_SI<\infty$) this property is equivalent to the freeness of the conormal module $I/I^2$ over $R$; see \cite{V1}. We write\, ${\rm N}_R=(I/I^2)^*$ for the normal module of $R$.

\begin{corollary}\label{best-ci} Let $R$ be as in $($\ref{quot-of-reg}$)$. Assume $R$ is Cohen-Macaulay of dimension $d\geq 2$ and locally a complete intersection in codimension $n$, for an integer $n$ such that $1\leq n\leq d-1$. Assume that $I/I^2$ is reflexive. Suppose any one of the following sets of conditions:  \begin{itemize}
    \item [(i)] $\Ext^j_R({\rm N}_R, R)=0$ for all $j=1, \ldots, d$, and $\Ext^i_R(I/I^2, I/I^2)=0$ for all $i=n, \ldots, d-1$;

    \item [(ii)] $\Ext^j_R(I/I^2, R)=0$ for all $j=1, \ldots, d-2$ $($if $d\geq 3$$)$, $\Ext^i_R({\rm N}_R, {\rm N}_R)=0$ for all $i=1, \ldots, d$, and $\Ext^k_R(I/I^2, {\rm End}_R({\rm N}_R))=0$ for all $k=n, \ldots, d-1$;
    
   \item [(iii)] $\Ext^j_R(I/I^2, R)=0$ for all $j=1, \ldots, d$, and  $\Ext^i_R({\rm N}_R, {\rm N}_R)=0$ for all $i=n, \ldots, d-1$.
   
   
    \end{itemize} Then, $I$ can be generated by an $S$-sequence.
\end{corollary}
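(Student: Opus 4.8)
The plan is to reduce the statement to the freeness of the conormal module $I/I^2$ over $R$: by the conormal criterion recalled before the statement (valid since $\pd_SI<\infty$ as $S$ is regular; see \cite{V1}), the ideal $I$ is generated by an $S$-sequence if and only if $I/I^2$ is free over $R$. Thus it suffices to prove that $I/I^2$ is free, and my strategy is to mimic the proofs of Corollary \ref{best-reg} and Corollary \ref{best-reg2}, replacing $\Omega_R$ by $M:=I/I^2$ and the derivation module $\Omega_R^*$ by ${\rm N}_R=(I/I^2)^*=M^*$.

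First I would assemble the common ingredients. Since $R$ is locally a complete intersection in codimension $n$, for each $\mathfrak{p}=\mathfrak{P}/I\in\X^n(R)$ the ideal $I_\mathfrak{P}$ is generated by an $S_\mathfrak{P}$-regular sequence, so $(I/I^2)_\mathfrak{p}\cong I_\mathfrak{P}/I^2_\mathfrak{P}$ is free over $R_\mathfrak{p}$; hence both $I/I^2$ and ${\rm N}_R$ are locally free, in particular of finite projective dimension, on $\X^n(R)$. Because $R$ is in particular locally Gorenstein in codimension $1$, reflexivity of a finite $R$-module is equivalent to the Serre condition $(S_2)$; together with the standing hypothesis that $I/I^2$ is reflexive, this yields that $I/I^2$ satisfies $(S_2)$ and that $(I/I^2)^{**}\cong I/I^2$. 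Finally, being a dual (equivalently a second syzygy) over the Cohen--Macaulay ring $R$, the module ${\rm N}_R$ satisfies $(S_2)$, and since $I/I^2$ is reflexive so is its dual ${\rm N}_R$ (the natural map ${\rm N}_R\to {\rm N}_R^{**}$ is inverse to the dual of the biduality map of $I/I^2$), whence ${\rm N}_R^{**}\cong{\rm N}_R$.

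For case $(i)$ I would apply Theorem \ref{acstgeneralization} with $M=I/I^2$ and $N=R$: hypotheses $(i)$ and $(ii)$ of that theorem hold by the previous paragraph, condition $(iii)$ reads $\Ext^j_R((I/I^2)^*,R)=\Ext^j_R({\rm N}_R,R)=0$ for $j=1,\ldots,d$ (the first assumption), and since $\Hom_R({\rm N}_R,R)\cong(I/I^2)^{**}\cong I/I^2$, condition $(iv)$ becomes $\Ext^i_R(I/I^2,I/I^2)=0$ for $i=n,\ldots,d-1$ (the second assumption); hence $I/I^2$ is free. For case $(ii)$ I would instead take $N={\rm N}_R=M^*$, after first checking that ${\rm N}_R$ is maximal Cohen--Macaulay: for $d=2$ because a nonzero $(S_2)$ dual over a $2$-dimensional Cohen--Macaulay ring is maximal Cohen--Macaulay, and for $d\geq 3$ by Remark \ref{MCMdual}$(i)$ applied to the vanishing $\Ext^j_R(I/I^2,R)=0$ for $j=1,\ldots,\max\{1,d-2\}$. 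Then conditions $(iii)$ and $(iv)$ of Theorem \ref{acstgeneralization} translate respectively into $\Ext^j_R({\rm N}_R,{\rm N}_R)=0$ for $j=1,\ldots,d$ and $\Ext^k_R(I/I^2,{\rm End}_R({\rm N}_R))=0$ for $k=n,\ldots,d-1$, again forcing $I/I^2$ free.

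For case $(iii)$ the Ext-vanishing is of the shape occurring in Corollary \ref{best-reg2}, so I would apply Theorem \ref{acstgeneralization} with $M={\rm N}_R$ and $N=R$: here condition $(iii)$ becomes $\Ext^j_R({\rm N}_R^*,R)\cong\Ext^j_R(I/I^2,R)=0$ for $j=1,\ldots,d$ (using $(I/I^2)^{**}\cong I/I^2$), and condition $(iv)$ becomes $\Ext^i_R({\rm N}_R,{\rm N}_R)=0$ for $i=n,\ldots,d-1$ (using ${\rm N}_R^{**}\cong{\rm N}_R$), so that ${\rm N}_R$ is free and therefore $I/I^2\cong{\rm N}_R^*$ is free as well. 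In every case the conormal criterion then finishes the argument. The only genuinely delicate points are the reflexivity identifications that convert each double dual back into the original module and the boundary treatment of $d=2$ in case $(ii)$; I expect no new conceptual obstacle to arise, since the proof is a faithful transcription of Corollaries \ref{best-reg} and \ref{best-reg2} through the dictionary $\Omega_R\leftrightarrow I/I^2$, $\,\Omega_R^*\leftrightarrow{\rm N}_R$.
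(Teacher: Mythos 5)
Your proposal is correct and follows essentially the same route as the paper's proof: in each case you apply Theorem \ref{acstgeneralization} with the identical choices of modules ($M=I/I^2$, $N=R$ for $(i)$; $M=I/I^2$, $N={\rm N}_R$ for $(ii)$, after the same maximal Cohen--Macaulayness check via Remark \ref{MCMdual}$(i)$ and the $d=2$ dual argument; $M={\rm N}_R$, $N=R$ for $(iii)$), concluding with the conormal-module criterion exactly as the paper does. The only (harmless) difference is cosmetic: you justify reflexivity of ${\rm N}_R$ by the split-biduality argument, whereas the paper deduces it from ${\rm N}_R$ being an $(S_2)$ dual over a ring that is Gorenstein in codimension one.
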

\begin{proof} Since $R$ is locally a complete intersection in codimension $n$, the $R_{\mathfrak p}$-module $(I/I^2)_{\mathfrak p}$ is free for all  ${\mathfrak p} \in X^n(R)$. Moreover, as in particular $R$ is locally Gorenstein in codimension 1, the condition of $I/I^2$ being reflexive is equivalent to $I/I^2$ satisfying $(S_2)$. If we assume that $(i)$ holds, then by applying Theorem \ref{acstgeneralization} with $M=I/I^2$ and $N=R$ we obtain that $I/I^2$ is free, as needed.

Now, suppose $(ii)$ holds. We claim that ${\rm N}_R$ is maximal Cohen-Macaulay. If $d=2$ then this holds since $R$ is Cohen-Macaulay and ${\rm N}_R$ is a dual over $R$. Now assume $d\geq 3$. By Remark \ref{MCMdual}$(i)$, the condition $\Ext^j_R(I/I^2, R)=0$ for all $j=1, \ldots, d-2={\rm max}\{1, d-2\}$ implies that ${\rm N}_R$ is maximal Cohen-Macaulay, as needed. We then apply Theorem \ref{acstgeneralization} with $M=I/I^2$ and $N={\rm N}_R$ to conclude that $I/I^2$ is free.

Finally, let us assume $(iii)$. The $R$-module ${\rm N}_R$ is locally free on $X^n(R)$, and moreover, being a dual over a Cohen-Macaulay ring which is locally Gorenstein in codimension 1, it satisfies $(S_2)$ and is reflexive. By Theorem \ref{acstgeneralization} with $M={\rm N}_R$ and $N=R$, we get that ${\rm N}_R$ is free. Since $I/I^2$ is reflexive, $I$ can be generated by a regular sequence. \end{proof}


In the next corollary, $R$ is no longer assumed to be locally a complete intersection in certain codimension. Instead, we deal with the Auslander transpose and the canonical dual of $I/I^2$.

\begin{corollary}  Let $R$ be as in $($\ref{quot-of-reg}$)$. Assume $R$ is Cohen-Macaulay of dimension $d\geq1$. Suppose any one of the following sets of conditions:
\begin{itemize}
\item [(i)] $\Ext^j_R(I/I^2, R)=0$ for all $j=1, \ldots, d$, and $\Ext^i_R(\Tr I/I^2, {\rm N}_R)=0$ for all $i=1, \ldots, d+1$;

\item [(ii)] $I/I^2$ is maximal Cohen-Macaulay, and $\Ext^i_R(\Tr I/I^2, (I/I^2)^{\dagger})=0$ for all $i=1, \ldots, d+1$.
\end{itemize}
Then, $I$ can be generated by an $S$-sequence.
\end{corollary}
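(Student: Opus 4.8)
The plan is to reduce both cases to the single assertion that the conormal module $I/I^2$ is free over $R$. Indeed, as recalled just before the statement (see \cite{V1}), since ${\rm pd}_SI<\infty$ the ideal $I$ is generated by an $S$-sequence exactly when $I/I^2$ is $R$-free. Thus this corollary is the conormal counterpart of Corollary \ref{reg-Tr}, and I expect the proof to proceed in complete parallel, replacing the differential module $\Omega_R$ by $I/I^2$, the derivation module ${\rm Der}(R)=\Omega_R^*$ by the normal module ${\rm N}_R=(I/I^2)^*$, and ${\rm Der}(R,\omega_R)=\Omega_R^\dagger$ by $(I/I^2)^\dagger$.

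First I would dispose of case $(i)$ by applying Proposition \ref{freenesstr} with $M=I/I^2$ and $N=R$. Since $R$ is Cohen-Macaulay, $N=R$ is maximal Cohen-Macaulay, so the proposition is applicable. Its hypothesis $(i)$ is precisely $\Ext^j_R(I/I^2,R)=0$ for $j=1,\ldots,d$ (recall $d\geq 1$), the first assumption of our case $(i)$. For its hypothesis $(ii)$ I would use the identification $\Hom_R(I/I^2,R)=(I/I^2)^*={\rm N}_R$, under which $\Ext^i_R(\Tr I/I^2,{\rm N}_R)=0$ for $i=1,\ldots,d+1$ becomes exactly $\Ext^j_R(\Tr M,\Hom_R(M,N))=0$. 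Hence Proposition \ref{freenesstr} yields that $I/I^2$ is free. Note that no canonical module hypothesis is needed in this case, as the proof of that proposition reduces internally to the complete situation.

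For case $(ii)$ I would instead invoke Corollary \ref{trfreenesscriterion} with $M=I/I^2$: the module is assumed maximal Cohen-Macaulay, and $\Ext^i_R(\Tr I/I^2,(I/I^2)^\dagger)=0$ for $i=1,\ldots,d+1$ is exactly the vanishing required there. The single point to check is the presence of the canonical module demanded by that corollary, which is automatic here: $R=S/I$ is a Cohen-Macaulay quotient of the regular (hence Gorenstein) local ring $S$, so it admits a canonical module, e.g. $\omega_R\cong\Ext^{\dim S-d}_S(R,S)$; with $\omega_R$ available, $(I/I^2)^\dagger=\Hom_R(I/I^2,\omega_R)$ is well-defined and Corollary \ref{trfreenesscriterion} applies, again giving freeness of $I/I^2$.

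In either case, freeness of $I/I^2$ together with the recalled equivalence from \cite{V1} shows that $I$ is generated by an $S$-sequence. I do not anticipate a genuine obstacle; the argument is essentially a transcription of the proof of Corollary \ref{reg-Tr}, and the only mildly delicate steps are the bookkeeping identification $\Hom_R(I/I^2,R)={\rm N}_R$ in case $(i)$ and the routine verification that a canonical module exists in case $(ii)$.
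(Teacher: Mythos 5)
Your proof is correct and is exactly the paper's own argument: case $(i)$ is Proposition \ref{freenesstr} applied with $M=I/I^2$ and $N=R$, case $(ii)$ is Corollary \ref{trfreenesscriterion} applied with $M=I/I^2$, and the conclusion follows from the equivalence (recalled before the statement, via \cite{V1}) between freeness of $I/I^2$ and $I$ being generated by an $S$-sequence. The details you add --- the identification $\Hom_R(I/I^2,R)={\rm N}_R$ and the existence of $\omega_R$ for a Cohen-Macaulay quotient of a regular local ring --- are precisely the routine verifications the paper leaves implicit.
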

\begin{proof} For $(i)$ (resp. $(ii)$), apply Proposition \ref{freenesstr} with $N=R$ (resp. Corollary \ref{trfreenesscriterion}) and take $M=I/I^2$.
\end{proof}

\begin{corollary}\label{Conormal-N}  Let $R$ be as in $($\ref{quot-of-reg}$)$. Assume that $R$ is Cohen-Macaulay of dimension $d\geq 1$ and locally a complete intersection on its punctured spectrum. Suppose any one of the following sets of conditions:
\begin{itemize}
\item [(i)] $\Ext^j_R(I/I^2, R)=\Ext^j_R(I/I^2,  I/I^2\otimes_R\omega_R)=0$ for all $j=1, \ldots, d$, and $$\Ext^{d+1}_R(\Tr I/I^2, {\rm N}_R)=0  \quad (\mbox{e.g., if} \, \, \,  {\rm pd}_R{\rm N}_R<\infty \, \, \, \mbox{or} \, \, \,  {\rm id}_R{\rm N}_R<\infty).$$


\item [(ii)] $I/I^2$ is maximal Cohen-Macaulay, $\Ext^j_R(I/I^2, I/I^2)=0$ for all $j=1, \ldots, d$, and $$\Ext^{d+1}_R(\Tr I/I^2, (I/I^2)^{\dagger})=0 \quad (\mbox{e.g., if} \, \, \,  {\rm pd}_R{\rm N}_R<\infty \, \, \, \mbox{or} \, \, \,  {\rm id}_R(I/I^2)^{\dagger}<\infty).$$
\end{itemize}
Then, $I$ can be generated by an $S$-sequence.
\end{corollary}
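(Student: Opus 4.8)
The plan is to recognize this corollary as the exact specialization of the general freeness criteria already established in the excerpt, applied with the universally fixed choice $M = I/I^2$ and an appropriate maximal Cohen-Macaulay module $N$. First I would observe that, under the standing hypothesis (\ref{quot-of-reg}) with $I$ locally a complete intersection on the punctured spectrum, the conormal module $I/I^2$ is locally free on $\Spec R \setminus \{\mathfrak{m}\}$; indeed, at each $\mathfrak{p} \neq \mathfrak{m}$ the localization $I_{\mathfrak{p}}$ is generated by a regular sequence, so $(I/I^2)_{\mathfrak{p}}$ is $R_{\mathfrak{p}}$-free. Hence $I/I^2$ is a vector bundle, which is precisely the geometric input needed to invoke Corollary \ref{NFlocus} and Corollary \ref{mcmvectorbundle}. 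The desired conclusion—that $I$ is generated by an $S$-sequence—will then follow from the freeness of the conormal module, via the classical equivalence (recalled in the paragraph preceding Corollary \ref{best-ci}) between $I/I^2$ being $R$-free and $I$ being generated by a regular sequence.

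For part $(i)$, I would apply Corollary \ref{NFlocus} with $M = I/I^2$ and $N = R$. The three hypotheses of that corollary translate directly: condition $(i)$ of Corollary \ref{NFlocus} becomes $\Ext^i_R(I/I^2, R) = 0$ for $i = 1, \ldots, d$; condition $(ii)$ becomes $\Ext^j_R(I/I^2, (I/I^2) \otimes_R \omega_R) = 0$ for $j = 1, \ldots, d$, using that $N^\dagger = R^\dagger \cong \omega_R$ so that $M \otimes_R N^\dagger \cong (I/I^2) \otimes_R \omega_R$; and condition $(iii)$ becomes $\Ext^{d+1}_R(\Tr(I/I^2), \Hom_R(I/I^2, R)) = 0$, where $\Hom_R(I/I^2, R) = (I/I^2)^* = {\rm N}_R$ is the normal module. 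The parenthetical sufficient conditions ${\rm pd}_R {\rm N}_R < \infty$ or ${\rm id}_R {\rm N}_R < \infty$ are exactly the sufficient conditions listed in Corollary \ref{NFlocus}$(iii)$. One must also verify the standing requirement in Corollary \ref{NFlocus} that $\Supp_R N = \Spec R$, which is immediate for $N = R$. Thus Corollary \ref{NFlocus} yields that $I/I^2$ is free.

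For part $(ii)$, the hypothesis that $I/I^2$ is maximal Cohen-Macaulay and a vector bundle places us directly in the setting of Corollary \ref{mcmvectorbundle}. I would apply that corollary with $M = I/I^2$: condition $(i)$ there is $\Ext^i_R(I/I^2, I/I^2) = 0$ for $i = 1, \ldots, d$, and condition $(ii)$ is $\Ext^{d+1}_R(\Tr(I/I^2), (I/I^2)^\dagger) = 0$, again with the sufficient criteria ${\rm pd}_R (I/I^2)^* = {\rm pd}_R {\rm N}_R < \infty$ or ${\rm id}_R (I/I^2)^\dagger < \infty$ matching verbatim. This gives freeness of $I/I^2$. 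In both parts the final step is the translation to the $S$-sequence statement, which I do not expect to present any difficulty. The only genuine verification—and the one place I would be careful—is confirming that "locally a complete intersection on the punctured spectrum" really delivers the vector bundle property for $I/I^2$; this is the single geometric hypothesis driving both cases, and since the freeness machinery of Corollaries \ref{NFlocus} and \ref{mcmvectorbundle} does all the homological work, the proof is essentially a dictionary between the abstract statements and the differential/normal-module language.
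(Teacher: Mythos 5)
Your proposal is correct and follows exactly the paper's own argument: observe that local complete intersection on the punctured spectrum makes $I/I^2$ a vector bundle, then apply Corollary \ref{NFlocus} with $M=I/I^2$, $N=R$ for part $(i)$ and Corollary \ref{mcmvectorbundle} with $M=I/I^2$ for part $(ii)$, concluding via the classical equivalence between freeness of the conormal module and $I$ being generated by an $S$-sequence. Your dictionary checks ($N^\dagger\cong\omega_R$, $\Hom_R(I/I^2,R)={\rm N}_R$, $\Supp_R R=\Spec R$) are precisely the routine verifications the paper leaves implicit.
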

\begin{proof} Since $R$ is locally a complete intersection on the punctured spectrum, $I/I^2$ is a vector bundle. In the case (i) (resp. (ii)), use Corollary \ref{NFlocus} with $N=R$ (resp. Corollary \ref{mcmvectorbundle}) and take $M=I/I^2$.
\end{proof}

It is worth mentioning that the property of $R$ (taken as in (\ref{quot-of-reg})) being locally a complete intersection on its punctured spectrum takes place if, e.g., $I/I^2$ is maximal Cohen-Macaulay and $R$ is Cohen–Macaulay having only finitely many indecomposable non-isomorphic maximal Cohen-Macaulay
modules of rank at most the minimal number of generators of $I$ (see \cite[Corollary 6]{H-L-0}).

From the  Gorenstein case of the two corollaries above, we obtain the following characterizations.

\begin{corollary}\label{ci-Tr-2}  Let $R$ be as in $($\ref{quot-of-reg}$)$. Assume that $R$ is Gorenstein of dimension $d\geq1$ and that $I/I^2$ is maximal Cohen-Macaulay. Suppose any one of the following sets of conditions:
\begin{itemize}
\item [(i)] $\Ext^j_R(\Tr I/I^2, {\rm N}_R)=0$ for all $j=1, \ldots, d+1$;
\item [(ii)] $R$ is locally a complete intersection on its punctured spectrum, $\Ext^j_R(I/I^2, I/I^2)=0$ for all $j=1, \ldots, d$, and $\Ext^{d+1}_R(\Tr I/I^2, {\rm N}_R)=0$.
\end{itemize}
Then, $I$ can be generated by an $S$-sequence.
\end{corollary}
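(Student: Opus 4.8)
The plan is to deduce Corollary \ref{ci-Tr-2} directly from the two immediately preceding corollaries by specializing them to the Gorenstein setting. The key structural observation is that when $R$ is Gorenstein, the canonical module $\omega_R$ is free of rank one, so that for any finite $R$-module $U$ the canonical dual $U^{\dagger}=\Hom_R(U, \omega_R)$ is naturally isomorphic to the algebraic dual $U^*=\Hom_R(U,R)$. Applying this with $U=I/I^2$ gives $(I/I^2)^{\dagger}\cong (I/I^2)^*={\rm N}_R$, which is precisely what converts the hypotheses phrased with ${\rm N}_R$ into the canonical-dual hypotheses of the earlier results, and vice versa.

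First I would treat part $(i)$. The assumption that $R$ is Gorenstein of dimension $d\geq 1$ with $I/I^2$ maximal Cohen-Macaulay puts us exactly in the hypotheses of Corollary \ref{trfreenesscriterion} once we take $M=I/I^2$: that corollary requires $M$ to be maximal Cohen-Macaulay and $\Ext^j_R(\Tr M, M^{\dagger})=0$ for all $j=1,\ldots,d+1$. Using the identification $M^{\dagger}\cong M^*={\rm N}_R$, the hypothesis $\Ext^j_R(\Tr I/I^2, {\rm N}_R)=0$ for all $j=1,\ldots,d+1$ is exactly the required vanishing. Hence Corollary \ref{trfreenesscriterion} yields that $I/I^2$ is free, and by the classical criterion recalled before (\ref{quot-of-reg})—freeness of the conormal module is equivalent to $I$ being generated by an $S$-sequence—the conclusion follows.

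Next I would handle part $(ii)$ in the same spirit, but routing through Corollary \ref{mcmvectorbundle} (the isolated/punctured-spectrum vector bundle criterion) rather than Corollary \ref{trfreenesscriterion}. The added hypothesis that $R$ is locally a complete intersection on its punctured spectrum guarantees, as noted in the proof of Corollary \ref{Conormal-N}, that $I/I^2$ is a vector bundle; together with the assumption that $I/I^2$ is maximal Cohen-Macaulay, we may take $M=I/I^2$ in Corollary \ref{mcmvectorbundle}. Its two conditions are $\Ext^i_R(M,M)=0$ for $i=1,\ldots,d$ and $\Ext^{d+1}_R(\Tr M, M^{\dagger})=0$; the first is given outright, and the second becomes $\Ext^{d+1}_R(\Tr I/I^2, {\rm N}_R)=0$ after replacing $M^{\dagger}$ by ${\rm N}_R$ via the Gorenstein identification. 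Thus $I/I^2$ is free and again $I$ is generated by an $S$-sequence.

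The argument is essentially a translation, so there is no deep obstacle; the only point demanding care is the canonical identification $(I/I^2)^{\dagger}\cong {\rm N}_R$ in the Gorenstein case, and the bookkeeping of matching each hypothesis in $(i)$ and $(ii)$ to the correct prerequisite (maximal Cohen-Macaulayness, the vector bundle property, and the precise Ext-vanishing ranges) of Corollary \ref{trfreenesscriterion} and Corollary \ref{mcmvectorbundle} respectively. In effect, Corollary \ref{ci-Tr-2} records the Gorenstein specializations of the preceding two corollaries, exactly as its statement advertises.
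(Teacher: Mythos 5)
Your proof is correct and follows essentially the same route as the paper: Corollary \ref{ci-Tr-2} is stated there as the Gorenstein specialization of the two preceding corollaries, whose proofs are precisely applications of Corollary \ref{trfreenesscriterion} (for part $(i)$) and Corollary \ref{mcmvectorbundle} (for part $(ii)$) with $M=I/I^2$, using exactly your identification $(I/I^2)^{\dagger}\cong(I/I^2)^{*}={\rm N}_R$ coming from $\omega_R\cong R$.
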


For Gorenstein ideals of height 3, we have simpler statements.

\begin{corollary}\label{codim3}  Let $R$ be as in $($\ref{quot-of-reg}$)$. Assume that $R$ is Gorenstein of dimension $d\geq1$ and that ${\rm ht}\,I=3$ . Suppose any one of the following sets of conditions:
\begin{itemize}
\item [(i)] $R$ is locally a complete intersection in codimension $0$ $($e.g., if\, $R$ is reduced$)$, and $\Ext^j_R(I/I^2, {\rm N}_R)=0$ for all $j=1, \ldots, d+1$;
\item [(ii)] $R$ is locally a complete intersection on its punctured spectrum, $\Ext^j_R(I/I^2, I/I^2)=0$ for all $j=1, \ldots, d$, and $\Ext^{d+1}_R(I/I^2, {\rm N}_R)=0$.
\end{itemize}
Then, $I=(f, g, h)$ for some $S$-sequence $\{f, g, h\}\subset \mathfrak{M}$.
\end{corollary}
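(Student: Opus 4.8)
The plan is to deduce the statement from Corollary \ref{ci-Tr-2}, whose Gorenstein case is already at our disposal. Concretely, I will show that both hypothesis sets (i) and (ii) imply the corresponding hypotheses of Corollary \ref{ci-Tr-2}, once two structural facts about the grade-three Gorenstein ideal $I$ are established: first, that $\Tr(I/I^2)\cong I/I^2$, which turns each occurrence of $\Ext^{j}_R(I/I^2,-)$ in our hypotheses into the required $\Ext^{j}_R(\Tr I/I^2,-)$; and second, that $I/I^2$ is maximal Cohen-Macaulay, which is the standing assumption of Corollary \ref{ci-Tr-2}. Granting these, case (i) matches Corollary \ref{ci-Tr-2}(i) and case (ii) matches Corollary \ref{ci-Tr-2}(ii) (its punctured-spectrum hypothesis being passed along verbatim), so $I$ is generated by an $S$-sequence. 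Since $S$ is Cohen-Macaulay and $\height I=3$, such a regular sequence has length $\operatorname{grade}I=3$, yielding $I=(f,g,h)$ as claimed.

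The self-transpose identity rests on the Buchsbaum-Eisenbud structure theorem. As $I$ is a grade-three Gorenstein ideal of the regular local ring $S$, its minimal $S$-free resolution is the self-dual complex
\[
0\to S\xrightarrow{d^{\mathrm T}} S^{m}\xrightarrow{\phi} S^{m}\xrightarrow{d} S\to R\to 0,
\]
where $m$ is odd, $\phi$ is alternating, $d$ is the row of signed maximal Pfaffians of $\phi$, and $I=\operatorname{im}d$. Reducing modulo $I$ gives a minimal $R$-free presentation $R^{m}\xrightarrow{\bar\phi} R^{m}\to I/I^{2}\to 0$ (the entries of $\phi$ lie in $\mathfrak{M}$, hence those of $\bar\phi$ lie in $\mathfrak{m}$). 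Because $\phi$ is alternating, $\bar\phi^{\mathrm T}=-\bar\phi$, so $\Tr(I/I^{2})=\coker(\bar\phi^{\mathrm T})\cong\coker(\bar\phi)=I/I^{2}$, as desired.

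The maximal Cohen-Macaulayness of $I/I^{2}$ is where the codimension hypotheses enter, and I expect this to be the main obstacle. In case (i) it is assumed that $R$ is locally a complete intersection in codimension $0$; in case (ii) the hypothesis that $R$ is locally a complete intersection on the punctured spectrum already implies the same in codimension $0$, since $d\ge 1$ forces every minimal prime to be non-maximal. Thus in both cases $(I/I^{2})_{\mathfrak{p}}$ is $R_{\mathfrak{p}}$-free (of rank $3$) at every minimal prime $\mathfrak{p}$, i.e. $R$ is generically a complete intersection, and in particular $I/I^{2}$ is torsionfree. To upgrade this to the maximal Cohen-Macaulay property I would exploit the self-dual presentation: the four-term exact sequence $0\to {\rm N}_R\to R^{m}\xrightarrow{\bar\phi} R^{m}\to I/I^{2}\to 0$, in which $\ker\bar\phi\cong\Tor_2^{S}(R,R)\cong (I/I^{2})^{*}={\rm N}_R$ by the Poincaré duality of the Tor-algebra of the Gorenstein quotient $R$, is isomorphic to its own $R$-dual up to the sign on $\bar\phi$. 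Combining this self-duality with the $(S_2)$-property of the dual ${\rm N}_R$ and the criterion of Remark \ref{MCMdual}(ii) should convert torsionfreeness of $I/I^{2}$ into maximal Cohen-Macaulayness; the delicate point is the depth bookkeeping that reconciles the self-dual structure with the generic rank-$3$ behavior. Alternatively, one may invoke the known fact that a generically complete intersection grade-three Gorenstein ideal is strongly Cohen-Macaulay, whence its conormal module is maximal Cohen-Macaulay. With both structural facts in hand, the reduction to Corollary \ref{ci-Tr-2} is immediate and finishes the proof.
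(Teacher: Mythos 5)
Your proposal follows the paper's proof essentially verbatim: the paper likewise invokes the Buchsbaum--Eisenbud structure theorem to get an alternating presentation matrix $\overline{\varphi}$ for $I/I^2$, concludes $\Tr(I/I^2)=\coker\overline{\varphi}^{*}=\coker\overline{\varphi}=I/I^2$, quotes the literature (Herzog, Huneke--Ulrich) for the maximal Cohen--Macaulayness of $I/I^2$, and then applies Corollary \ref{ci-Tr-2}. Note that your ``alternative'' route for the MCM step (citing the known fact about generically complete intersection grade-three Gorenstein ideals) is exactly what the paper does, so the sketched self-duality/depth-bookkeeping argument --- whose gap you yourself acknowledge --- can simply be discarded.
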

\begin{proof} Under such hypotheses, $I/I^2$ is maximal Cohen-Macaulay (see \cite{Her0}, \cite{Hu-U}). Moreover, the well-known Buchsbaum-Eisenbud structure theorem (see \cite{B-E}) guarantees that $I$ admits an $S$-free presentation of the form $\xymatrix@=1em{F\ar[r] & F\ar[r] & I\ar[r] & 0}$ defined by an alternating map $\varphi \in {\rm End}_S(F)$. Then, the conormal module $I/I^2=I\otimes_SR$ has an $R$-free presentation $$\xymatrix@=1em{F/IF\ar[r] & F/IF\ar[r] & I/I^2\ar[r] & 0}$$ defined by $\overline{\varphi}=\varphi \otimes {\rm Id}_R$. Set ${\rm Hom}_R(\overline{\varphi}, R)=\overline{\varphi}^*$. Since $\varphi$ is alternating, ${\rm im}\, \overline{\varphi}^*= {\rm im}\, \overline{\varphi}$ and therefore we can write ${\rm Tr}\,I/I^2={\rm coker}\, \overline{\varphi}^*= {\rm coker}\, \overline{\varphi}=I/I^2$. Now the result follows by Corollary \ref{ci-Tr-2}. \end{proof}

We close the subsection by raising the following problem.

\begin{question}\rm Let $R$ be as in $($\ref{quot-of-reg}$)$. Suppose that $R$ is Cohen-Macaulay with a canonical module and that $R$ contains a field of characteristic zero.  If ${\rm id}_RI/I^2<\infty$, ${\rm id}_R{\rm N}_R<\infty$, or ${\rm id}_R(I/I^2)^{\dagger}<\infty$ (resp. ${\cidim}_RI/I^2<\infty$, ${\cidim}_R{\rm N}_R<\infty$, or ${\cidim}_R(I/I^2)^{\dagger}<\infty$), can $I$ be generated by an $S$-sequence?
\end{question}

\subsection{Gorenstein local rings} In this last subsection we deal with the Gorenstein property. First, by taking $M=\omega_R$ in Corollary \ref{trfreenesscriterion} we immediately derive the following characterization.



\begin{corollary}\label{Gor-charact}
Let $R$ be a Cohen-Macaulay local ring of dimension $d$ with canonical module $\omega_R$. Then, $R$ is Gorenstein if $$\Ext^j_R(\Tr \omega_R, R)=0 \quad \mbox{for \,all} \quad j=1, \ldots, d+1.$$ 
\end{corollary}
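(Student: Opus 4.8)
The plan is to apply Corollary \ref{trfreenesscriterion} directly with $M = \omega_R$, so the whole argument amounts to checking that the canonical module meets the hypotheses of that corollary and then translating its conclusion. The first thing I would verify is that this choice of $M$ is legitimate: the canonical module of a Cohen-Macaulay local ring is always maximal Cohen-Macaulay, so the standing hypothesis on $M$ in Corollary \ref{trfreenesscriterion} holds automatically.

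Next I would identify the canonical dual of $\omega_R$. By definition $\omega_R^\dagger = \Hom_R(\omega_R, \omega_R)$, and the standard fact that the homothety map induces an isomorphism $\Hom_R(\omega_R,\omega_R) \cong R$ for a Cohen-Macaulay local ring with canonical module gives $\omega_R^\dagger \cong R$. Consequently $\Ext^j_R(\Tr \omega_R, \omega_R^\dagger) \cong \Ext^j_R(\Tr \omega_R, R)$ for every $j$, so the stated hypothesis $\Ext^j_R(\Tr \omega_R, R) = 0$ for $j = 1, \ldots, d+1$ is literally the vanishing required to invoke Corollary \ref{trfreenesscriterion} with $M=\omega_R$.

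With these two observations in place, Corollary \ref{trfreenesscriterion} immediately yields that $\omega_R$ is free. To finish, I would invoke the well-known characterization that a Cohen-Macaulay local ring $R$ with canonical module is Gorenstein precisely when $\omega_R$ is free: since $\omega_R$ has rank one, freeness forces $\omega_R \cong R$, which is the defining property of Gorensteinness.

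I do not anticipate any genuine obstacle, as the statement is essentially a specialization of Corollary \ref{trfreenesscriterion}; the only inputs beyond that corollary are the two textbook facts that $\omega_R$ is maximal Cohen-Macaulay and that $\Hom_R(\omega_R,\omega_R) \cong R$. The one point meriting care is simply recording the isomorphism $\omega_R^\dagger \cong R$ so that the Ext-vanishing hypotheses line up correctly, together with the remark that freeness of a rank-one canonical module is equivalent to the Gorenstein property.
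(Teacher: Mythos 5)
Your proposal is correct and is exactly the paper's argument: the paper derives this corollary by taking $M=\omega_R$ in Corollary \ref{trfreenesscriterion}, using precisely the two facts you cite ($\omega_R$ is maximal Cohen--Macaulay and $\omega_R^\dagger=\Hom_R(\omega_R,\omega_R)\cong R$). One tiny point of phrasing: rather than saying $\omega_R$ ``has rank one'' (which presupposes $R$ generically Gorenstein), it is cleaner to note that $\Hom_R(\omega_R,\omega_R)\cong R$ with $R$ local makes $\omega_R$ indecomposable, so freeness forces $\omega_R\cong R$, i.e., $R$ is Gorenstein.
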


Below we detect a situation where the vanishing of a single Ext module (to wit, $\Ext^{d+1}_R(\Tr \omega_R, R)$) suffices to ensure Gorensteiness. The additional condition is that $R$ be locally Gorenstein on its punctured spectrum; this occurs, for instance, if $R$ has only finitely many indecomposable non-isomorphic maximal Cohen-Macaulay modules of multiplicity at most $\tau \cdot \varepsilon$, where $\tau$ and $\varepsilon$ denote respectively the Cohen-Macaulay type and the multiplicity of the local ring $R$ (see \cite[Corollary 5]{H-L-0}).

\begin{corollary}\label{Gor-charact-loc}
Let $R$ be a Cohen-Macaulay local ring of dimension $d\geq 1$ with canonical module $\omega_R$. Suppose $R$ is locally Gorenstein on its punctured spectrum. Then, $R$ is Gorenstein if $$\Ext^{d+1}_R(\Tr \omega_R, R)=0 \quad (\mbox{e.g., if} \, \, \,  {\rm pd}_R\omega_R^*<\infty).$$ 
\end{corollary}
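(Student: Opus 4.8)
The plan is to apply Corollary \ref{mcmvectorbundle} with the module $M=\omega_R$ and the maximal Cohen-Macaulay module $N=\omega_R$, so that $M^{\dagger}=\Hom_R(\omega_R,\omega_R)\cong R$ and the Auslander transpose $\Tr M=\Tr \omega_R$ enters through the displayed Ext vanishing. To do this I must verify the hypotheses of Corollary \ref{mcmvectorbundle}: that $\omega_R$ is a maximal Cohen-Macaulay vector bundle, that $\Ext^i_R(\omega_R,\omega_R)=0$ for all $i=1,\ldots,d$, and that $\Ext^{d+1}_R(\Tr \omega_R, \omega_R^{\dagger})=0$. The canonical module is always maximal Cohen-Macaulay, and the vanishing $\Ext^i_R(\omega_R,\omega_R)=0$ for $i>0$ is a standard property of the canonical module over a Cohen-Macaulay local ring; the hypothesis that $R$ is locally Gorenstein on the punctured spectrum is precisely what guarantees that $\omega_R$ is locally free off the maximal ideal, i.e.\ a vector bundle, since $(\omega_R)_{\mathfrak p}\cong \omega_{R_{\mathfrak p}}$ is free exactly when $R_{\mathfrak p}$ is Gorenstein.

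The key identification is $\omega_R^{\dagger}=\Hom_R(\omega_R,\omega_R)\cong R$, so that condition $(ii)$ of Corollary \ref{mcmvectorbundle} becomes exactly the stated hypothesis $\Ext^{d+1}_R(\Tr \omega_R, R)=0$. With all hypotheses of Corollary \ref{mcmvectorbundle} in place, that corollary yields that $\omega_R$ is free, which for a Cohen-Macaulay local ring with canonical module is equivalent to $R$ being Gorenstein. For the parenthetical sufficient condition, I would note that if $\pd_R\omega_R^*<\infty$ then the vanishing $\Ext^{d+1}_R(\Tr \omega_R, R)=0$ follows from the corresponding parenthetical clause already recorded in Corollary \ref{mcmvectorbundle}$(ii)$ (the clause reading ``e.g., if $\pd_RM^*<\infty$''), applied with $M=\omega_R$ so that $M^*=\omega_R^*$.

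I expect no genuine obstacle here, as this is a direct specialization of the already-established Corollary \ref{mcmvectorbundle}. The only point requiring a word of care is confirming that the vector-bundle hypothesis of Corollary \ref{mcmvectorbundle} is met: one must invoke the assumption that $R$ is locally Gorenstein on the punctured spectrum to conclude that $\omega_R$ is locally free away from $\mathfrak m$, using the localization behaviour $(\omega_R)_{\mathfrak p}\cong \omega_{R_{\mathfrak p}}$ together with the fact that a Cohen-Macaulay local ring is Gorenstein precisely when its canonical module is free. Everything else is a formal substitution, so the proof should be short, essentially one sentence invoking Corollary \ref{mcmvectorbundle} with $M=N=\omega_R$.
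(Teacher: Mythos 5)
Your proposal is correct and matches the paper's own proof: both apply Corollary \ref{mcmvectorbundle} with $M=\omega_R$, using the punctured-spectrum Gorenstein hypothesis to get the vector-bundle condition, the standard vanishing $\Ext^i_R(\omega_R,\omega_R)=0$ for $i>0$, and the identification $\omega_R^\dagger\cong R$ to turn hypothesis $(ii)$ into the stated Ext vanishing. (Your phrase ``with $N=\omega_R$'' is harmless but superfluous, since Corollary \ref{mcmvectorbundle} takes only the single module $M$.)
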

\begin{proof} As $R$ is locally Gorenstein on its punctured spectrum, the (maximal Cohen-Macaulay) module $\omega_R$ is a vector bundle. Moreover, since $\Ext^{j}_R(\omega_R, \omega_R)=0$ for all $j>0$ and $\omega_R^{\dagger}\cong R$, the result follows by Corollary \ref{mcmvectorbundle}. \end{proof}


In the Artinian case, Corollary \ref{Gor-charact} ensures that $R$ is Gorenstein if and only if $\Ext^1_R(\Tr \omega_R, R)=0$. Now if $R$ is  one-dimensional and reduced, then using Corollary \ref{Gor-charact-loc} we get that $R$ is Gorenstein if and only if $\Ext^2_R(\Tr \omega_R, R)=0$ if and only if $\omega_R^*$ is free. In the next result, we treat -- while revealing some extra information -- the case $d=2$.


     


\begin{corollary}\label{Gor-charact-2}
Let $R$ be a two-dimensional Cohen-Macaulay local ring with canonical module $\omega_R$ which is locally Gorenstein on its punctured spectrum $($e.g., if $R$ is normal$)$. The following assertions are equivalent:
\begin{itemize}
    \item [(i)] $R$ is Gorenstein;
    \item [(ii)] $\Ext^3_R(\Tr \omega_R, R)=0$;
     \item [(iii)] ${\rm pd}_R\omega_R^*<\infty$;
     
     
      \item [(iv)] ${\cidim}_R\omega_R^*<\infty$ and ${\rm pd}_R(\omega_R^*)^{\dagger}<\infty$.
\end{itemize}
\end{corollary}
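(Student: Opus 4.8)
The plan is to run the cycle $(i) \Rightarrow (iv) \Rightarrow (iii) \Rightarrow (ii) \Rightarrow (i)$, with the bulk of the work concentrated in $(iv) \Rightarrow (iii)$; the remaining three arrows are short. For $(i) \Rightarrow (iv)$, if $R$ is Gorenstein then $\omega_R \cong R$, so $\omega_R^* \cong R$ is free (whence $\cidim_R \omega_R^* = 0$) and $(\omega_R^*)^\dagger \cong \Hom_R(R, \omega_R) \cong \omega_R \cong R$ is free as well, so both finiteness conditions of $(iv)$ hold. For the closing arrows $(iii) \Rightarrow (ii) \Rightarrow (i)$, I would invoke Corollary \ref{Gor-charact-loc} with $d = 2$: its parenthetical records that $\pd_R \omega_R^* < \infty$ forces $\Ext^3_R(\Tr \omega_R, R) = 0$, giving $(iii) \Rightarrow (ii)$, and its conclusion gives $(ii) \Rightarrow (i)$. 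This is precisely where the standing hypothesis that $R$ be locally Gorenstein on the punctured spectrum is used.

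The heart of the matter is $(iv) \Rightarrow (iii)$. First I would note that $\omega_R^* = \Hom_R(\omega_R, R)$, being a dual into $R$, satisfies $\depth_R \omega_R^* \geq \min\{2, \depth R\} = 2$; as $\dim R = 2$, this makes $\omega_R^*$ maximal Cohen-Macaulay. Here Remark \ref{MCMdual}$(i)$ is not directly available for $d = 2$, since it would require $\Ext^1_R(\omega_R, R) = 0$, which we do not assume; hence the appeal to the direct depth bound for dual modules. Combining maximal Cohen-Macaulayness with $\cidim_R \omega_R^* < \infty$ and the formula (\ref{CI-dim-form}) gives $\cidim_R \omega_R^* = \depth R - \depth \omega_R^* = 0$.

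Next I would exploit canonical duality. Since $\omega_R^*$ is maximal Cohen-Macaulay, so is its canonical dual $(\omega_R^*)^\dagger$, and $(\omega_R^*)^{\dagger\dagger} \cong \omega_R^*$. By hypothesis $\pd_R (\omega_R^*)^\dagger < \infty$, and a maximal Cohen-Macaulay module of finite projective dimension is free by the Auslander-Buchsbaum formula; thus $(\omega_R^*)^\dagger \cong R^n$ for some $n$. Dualizing, $\omega_R^* \cong (\omega_R^*)^{\dagger\dagger} \cong (R^n)^\dagger \cong \omega_R^n$. Invoking the standard vanishing $\Ext^i_R(\omega_R, \omega_R) = 0$ for all $i > 0$, I then obtain $\Ext^2_R(\omega_R^*, \omega_R^*) \cong \Ext^2_R(\omega_R^n, \omega_R^n) = 0$. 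Since $\cidim_R \omega_R^* = 0 < \infty$, Lemma \ref{cidim} applied with the even integer $e = 2$ yields $\pd_R \omega_R^* < \infty$, which is exactly $(iii)$ and closes the cycle.

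I expect the main obstacle to be this $(iv) \Rightarrow (iii)$ step, specifically converting the two a priori unrelated finiteness hypotheses into finite projective dimension of $\omega_R^*$ itself. The decisive idea is that canonical-dual reflexivity of maximal Cohen-Macaulay modules turns the condition $\pd_R (\omega_R^*)^\dagger < \infty$ into the concrete structural isomorphism $\omega_R^* \cong \omega_R^n$; once this is in hand, the self-$\Ext$ of the canonical module vanishes automatically, and the finiteness $\cidim_R \omega_R^* < \infty$ is exactly what Lemma \ref{cidim} needs to upgrade that vanishing to $\pd_R \omega_R^* < \infty$. It is worth emphasizing that each of the two hypotheses in $(iv)$ plays a distinct and indispensable role: the finite projective dimension of $(\omega_R^*)^\dagger$ supplies the structure $\omega_R^* \cong \omega_R^n$, while the finite complete intersection dimension of $\omega_R^*$ is what allows Lemma \ref{cidim} to detect finite projective dimension from a single even $\Ext$ vanishing.
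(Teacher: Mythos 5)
Your proof is correct, and the implication $(iv)\Rightarrow(iii)$ is handled by a genuinely different route than the paper's. The paper proves $(i)\Leftrightarrow(ii)\Leftrightarrow(iii)$ exactly as you do (via Corollary \ref{Gor-charact-loc}), notes $(i)\Rightarrow(iv)$ is trivial, and then closes with $(iv)\Rightarrow(i)$ as follows: after the same two opening observations you make ($\omega_R^*$ is maximal Cohen-Macaulay being a dual in dimension $2$, and $\cidim_R\omega_R^*=0$ by (\ref{CI-dim-form})), it applies Lemma \ref{tor0} with the finite projective dimension module $(\omega_R^*)^\dagger$ against the maximal Cohen-Macaulay module $\omega_R^*$ to get $\Tor_j^R(\omega_R^*,(\omega_R^*)^\dagger)=0$ for all $j>0$, and then invokes Proposition \ref{cidimtor} to conclude $\omega_R^*$ is free. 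That proposition internally relies on the spectral-sequence Theorem \ref{lmgeneralization}$(iii)$ to convert the Tor vanishing into Ext vanishing before calling Lemma \ref{cidim}. You instead use $\pd_R(\omega_R^*)^\dagger<\infty$ together with Auslander--Buchsbaum to see $(\omega_R^*)^\dagger$ is free, then canonical-dual reflexivity of maximal Cohen-Macaulay modules to obtain the explicit structural isomorphism $\omega_R^*\cong\omega_R^n$, from which $\Ext^2_R(\omega_R^*,\omega_R^*)=0$ is immediate, and you finish with Lemma \ref{cidim} directly. Your route thus bypasses the Tor-independence lemma and the spectral-sequence machinery entirely, which makes it more elementary and self-contained, and it reveals concretely what $(\omega_R^*)^\dagger$ having finite projective dimension forces $\omega_R^*$ to be; the paper's route, by contrast, is shorter on the page and uniform with the argument reused later in Corollary \ref{Gor-charact-d}$(iii)$. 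Both arguments use each half of hypothesis $(iv)$ in an essential and parallel way, and both ultimately rest on the Avramov--Buchweitz criterion (Lemma \ref{cidim}).
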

\begin{proof} The equivalences $(i)\Leftrightarrow(ii)\Leftrightarrow(iii)$ follow by Corollary \ref{Gor-charact-loc}. Note  $(i)\Rightarrow(iv)$ is trivial. Now suppose $(iv)$ holds. Being a dual over a Cohen-Macaulay local ring with $d=2$, the module $\omega_R^*$ is maximal Cohen-Macaulay.
By (\ref{CI-dim-form}) we get  ${\cidim}_R\omega_R^*=0$. On the other hand,
Lemma \ref{tor0} gives $${\rm Tor}_j^R(\omega_R^*, (\omega_R^*)^{\dagger})=0 \quad \mbox{for \,all} \quad j>0.$$ Now Proposition \ref{cidimtor} yields that $\omega_R^*$ is free, which as we already know means that $R$ is Gorenstein.
\end{proof}



In higher dimension, we have the corollary below. It is worth recalling that, if $R$ is a Cohen-Macaulay local ring with canonical module $\omega_R$ and dimension $d$, such that $R$ is locally Gorenstein in codimension 0, then $R$ is Gorenstein if and only if  $\Ext^j_R(\omega_R, R)=0$ for all $j=1, \ldots, d$ (see \cite[Corollary 2.2]{HH}).

\begin{corollary}\label{Gor-charact-d}
Let $R$ be a Cohen-Macaulay local ring with canonical module $\omega_R$ which is locally Gorenstein in codimension $1$ $($e.g., if $R$ is normal$)$ and dimension $d\geq 3$. The following assertions are equivalent:
\begin{itemize}
    \item [(i)] $R$ is Gorenstein;
    \item [(ii)] ${\rm pd}_R\omega_R^*<\infty$ and $\Ext^j_R(\omega_R, R)=0$ for all $j=1, \ldots, d-2$;
     
      
      \item [(iii)] ${\cidim}_R\omega_R^*<\infty$, $\Ext^j_R(\omega_R, R)=0$ for all $j=1, \ldots, d-2$, and ${\rm pd}_R(\omega_R^*)^{\dagger}<\infty$;
      
      \item [(iv)] $\Ext^j_R(\omega_R^*, R)=0$ for all $j=1, \ldots, d$.
\end{itemize}
\end{corollary}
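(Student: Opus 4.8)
The plan is to prove the equivalences by verifying a convenient cycle of implications, leaning on the freeness machinery from Sections 2 and 3 together with the classical fact recalled just before the statement. Throughout I write $\omega=\omega_R$ and note that since $R$ is locally Gorenstein in codimension $1$, the module $\omega$ is locally free on $X^1(R)$; being maximal Cohen-Macaulay with $\omega^{\dagger}\cong R$, it satisfies $(S_2)$ and is reflexive. The overall strategy is: establish $(i)\Rightarrow(ii)$ trivially, then $(ii)\Rightarrow(iii)$, then $(iii)\Rightarrow(iv)$, and finally close the loop with $(iv)\Rightarrow(i)$, which is where the real work lies.

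First I would dispatch $(i)\Rightarrow(ii)$: if $R$ is Gorenstein then $\omega\cong R$, so $\omega^*\cong R$ is free (hence of finite projective dimension) and $\Ext^j_R(\omega,R)\cong\Ext^j_R(R,R)=0$ for all $j>0$, in particular for $j=1,\ldots,d-2$. The implication $(ii)\Rightarrow(iii)$ is immediate from ${\rm pd}_R\omega^*<\infty\Rightarrow{\cidim}_R\omega^*<\infty$ and the fact that finite projective dimension is preserved; I would just observe that $(ii)$ already gives both finiteness conditions in $(iii)$ (the hypothesis ${\rm pd}_R(\omega_R^*)^{\dagger}<\infty$ being derivable once we know $\omega^*$ is free, or alternatively noting that under $(ii)$ the ring is forced Gorenstein by the $(iii)\Rightarrow(i)$ argument below so that $(\omega^*)^{\dagger}\cong R$). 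For $(iii)\Rightarrow(iv)$ I would argue that ${\cidim}_R\omega^*<\infty$ together with the displayed Tor-vanishing coming from Lemma \ref{tor0} and Proposition \ref{cidimtor} forces $\omega^*$ free; since freeness of $\omega^*$ is equivalent to Gorensteinness, one then has $\omega\cong R$ and $\Ext^j_R(\omega^*,R)=\Ext^j_R(R,R)=0$ for all $j=1,\ldots,d$, yielding $(iv)$.

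The crux is $(iv)\Rightarrow(i)$, and here I would invoke the characterization recalled immediately before the corollary, namely that for $R$ locally Gorenstein in codimension $0$ (a fortiori in codimension $1$) one has $R$ Gorenstein if and only if $\Ext^j_R(\omega,R)=0$ for all $j=1,\ldots,d$. The task therefore reduces to converting the vanishing of $\Ext^j_R(\omega^*,R)$ for $j=1,\ldots,d$ into the vanishing of $\Ext^j_R(\omega,R)$ for the same range. The natural device is the reflexivity of $\omega$ together with the duality relating $\omega$ and $\omega^*$: since $\omega\cong\omega^{**}$, a double-dual argument or an application of Theorem \ref{lmgeneralization} (with $M=\omega^*$ and $N=\omega$, exploiting $\omega^{\dagger}\cong R$ and the local freeness of $\omega$ in codimension $1$) should let me transfer the Ext-vanishing across the dual. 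The main obstacle I anticipate is precisely this transfer in the low-codimension ranges, where the $(S_2)$-condition and reflexivity must be used carefully to ensure that the relevant localized Ext modules vanish and that the global vanishing of $\Ext^j_R(\omega^*,R)$ indeed propagates to $\omega$; the higher-codimension contributions will be controlled by the hypothesis $d\geq 3$ and the local-Gorenstein-in-codimension-$1$ assumption, but reconciling the indexing $j=1,\ldots,d$ on both sides is the delicate point.

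Once $(iv)\Rightarrow(i)$ is secured, the cycle $(i)\Rightarrow(ii)\Rightarrow(iii)\Rightarrow(iv)\Rightarrow(i)$ is complete and all four conditions are equivalent. I would finally double-check the degenerate behaviour at the endpoints of the Ext ranges (e.g.\ that $d\geq 3$ makes the range $j=1,\ldots,d-2$ nonempty and that the codimension hypotheses are genuinely used) to confirm no edge case is overlooked.
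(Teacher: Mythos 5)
Your proposal breaks down at the crux you yourself identify: the implication $(iv)\Rightarrow(i)$. There is no mechanism in your sketch for ``transferring'' the vanishing of $\Ext^j_R(\omega_R^*,R)$ into the vanishing of $\Ext^j_R(\omega_R,R)$, and the tool you name does not do it: Theorem \ref{lmgeneralization} with $M=\omega_R^*$ and $N=\omega_R$ has hypotheses about $\Ext^j_R(\omega_R^*,\omega_R)$ (and conclusions about $\Tor^R_j(\omega_R^*,\omega_R^\dagger)=\Tor^R_j(\omega_R^*,R)$, which vanish trivially), so it never sees the hypothesis $(iv)$ at all. The paper's proof avoids any such transfer: it applies the freeness criterion Theorem \ref{acstgeneralization} with $n=1$, $M=\omega_R$, $N=R$, whose condition (iii) is \emph{literally} the hypothesis $(iv)$ (since $M^*=\omega_R^*$), whose condition (i) holds because $\omega_R$ is locally free on $\X^1(R)$, and whose condition (iv) reads $\Ext^i_R(\omega_R,\omega_R^{**})\cong\Ext^i_R(\omega_R,\omega_R)=0$ by reflexivity of $\omega_R$; the conclusion is that $\omega_R$ is free. (If you want to salvage your idea of using Theorem \ref{lmgeneralization}, the correct parameters are $M=\omega_R^*$ and $N=R$, so that $N^\dagger=\omega_R$: then part (i) of that theorem plus $(iv)$ gives $\omega_R^*\otimes_R\omega_R\cong(\omega_R^{**})^\dagger\cong\omega_R^\dagger\cong R$, and Lemma \ref{freetensor} forces $\omega_R$ free. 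With your stated choice $N=\omega_R$ the argument cannot start.)

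There is a second, smaller but genuine, gap in your treatment of $(ii)$ and $(iii)$: you never use the hypothesis $\Ext^j_R(\omega_R,R)=0$ for $j=1,\ldots,d-2$, yet it is indispensable. Since $d\geq 3$, the dual $\omega_R^*$ need not be maximal Cohen--Macaulay in general; it is exactly this Ext vanishing, via Remark \ref{MCMdual}$(i)$, that forces $\omega_R^*$ to be maximal Cohen--Macaulay. Without that step, none of your subsequent moves is available: under $(ii)$ one cannot conclude freeness of $\omega_R^*$ from $\pd_R\omega_R^*<\infty$ (Auslander--Buchsbaum needs maximal depth), and under $(iii)$ one cannot get $\cidim_R\omega_R^*=0$ from $\cidim_R\omega_R^*<\infty$, nor apply Lemma \ref{tor0} (which needs $\omega_R^*$ maximal Cohen--Macaulay) to feed Proposition \ref{cidimtor}, as in the paper's proof of Corollary \ref{Gor-charact-2}. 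Finally, your justification of $(ii)\Rightarrow(iii)$ is circular as stated: you propose to deduce $\pd_R(\omega_R^*)^\dagger<\infty$ by citing the $(iii)\Rightarrow(i)$ argument under the hypotheses of $(ii)$, but that argument itself consumes $\pd_R(\omega_R^*)^\dagger<\infty$ (it is the finite-projective-dimension input to Lemma \ref{tor0}), which is precisely what you are trying to produce. The paper sidesteps all of this by proving $(ii)\Rightarrow(i)$ and $(iii)\Rightarrow(i)$ separately rather than chaining $(ii)\Rightarrow(iii)\Rightarrow(iv)$.
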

\begin{proof} It is clear that $(i)$ implies $(ii)$, $(iii)$, and $(iv)$. Assume $(ii)$ holds. By Remark \ref{MCMdual}$(i)$, the vanishing of ${\rm Ext}_R^j(\omega_R, R)$ whenever $j = 1, \ldots, d - 2$ forces $\omega_R^*$ to be maximal Cohen-Macaulay, and hence it must be free. Because $R$ is locally Gorenstein in codimension $1$, it is easy to see that $\omega_R$ is reflexive (use \cite[Proposition 1.4.1(b)]{BH}). It
follows that $R$ is Gorenstein. Now suppose $(iii)$ holds. In particular, $\omega_R^*$ is maximal Cohen-Macaulay
and therefore, by using (\ref{CI-dim-form}), Lemma \ref{tor0} and Proposition \ref{cidimtor} as in the proof of Corollary \ref{Gor-charact-2}, we obtain that $\omega_R^*$ is free, hence $R$ is Gorenstein by the reflexivity of $\omega_R$. Finally, assume $(iv)$. Note $\omega_R$ is locally free on $\X^1(R)$. Furthermore, we have $${\rm Ext}_R^i(\omega_R, \omega_R^{**})\cong {\rm Ext}_R^i(\omega_R, \omega_R)=0 \quad \mbox{for \,all} \quad i>0.$$ Applying Theorem \ref{acstgeneralization} with $n=1$, $M=\omega_R$ and $N=R$, we conclude that $\omega_R$ is free.
\end{proof}



\begin{remark}\rm $(i)$ The equivalence between items $(i)$ and $(iv)$ also holds in case $d=2$.

\medskip

$(ii)$ Our motivation for considering complete intersection dimension in some of the items came up from the fact that $R$ is Gorenstein if and only if $\cidim_R\omega_R<\infty$ (see \cite[Proposition 1.2]{JL}). Notice that we can recover this result by applying our Proposition \ref{cidimtor} with $M=\omega_R$.

\end{remark}

In virtue of the results above, it seems plausible to raise the following concluding question, part of which can be regarded as an analogue of conjecture (\ref{SZL}) for the Gorenstein property, i.e., with $\omega_R$ in place of $\Omega_{R}$.

\begin{question}\rm Let $R$ be a Cohen-Macaulay local ring with canonical module $\omega_R$.  If ${\rm pd}_R\omega_R^*<\infty$ or ${\rm pd}_R(\omega_R^*)^{\dagger}<\infty$ (resp. ${\cidim}_R\omega_R^*<\infty$ or ${\cidim}_R(\omega_R^*)^{\dagger}<\infty$), must $R$ be Gorenstein?
\end{question}



\bigskip

\noindent{\bf Acknowledgements.} 
The first-named author was supported by a CAPES Doctoral Scholarship. The second-named author was partially supported by the CNPq-Brazil grants 301029/2019-9 and 406377/2021-9.

\bigskip

\noindent{\bf Data availability statement.} Data sharing not applicable to this article as no datasets were generated or analysed during the current study.

\bigskip

\noindent{\bf Conflict of interest.} On behalf of all authors, the corresponding author states that there is no conflict of interest.


\begin{thebibliography}{9}
\bibliographystyle{alpha}

\bibitem{A}
T. Araya, {\it The Auslander-Reiten conjecture for Gorenstein rings}, Proc. Amer. Math. Soc. {\bf 137} (2009), 1941--1944.

\bibitem{ACST}
T. Araya, O. Celikbas, A. Sadeghi, R. Takahashi, {\it On the vanishing of self extensions over Cohen–Macaulay local rings}, Proc. Amer. Math. Soc. {\bf 146} (2018), 4563--4570.

\bibitem{AY} T. Araya, Y. Yoshino, {\it Remarks on a depth formula, a grade inequality and a conjecture of Auslander}, Comm. Algebra {\bf 26} (1998), 3793--3806.

\bibitem{Asg}{M. Asgharzadeh}, {\it A note on the homology of differentials}, 2021, available at https://arxiv.org/abs/2012.11427

\bibitem{Au}
M. Auslander, {\it Modules over unramified regular local rings}, Illinois J. Math. {\bf 5} (1961), 631--647.

\bibitem{AuB}
M. Auslander, M. Bridger, {\it Stable module theory}, Mem. Amer. Math. Soc. {\bf 94}, American Mathematical Society, Providence, RI, 1969.

\bibitem{AR} M. Auslander, I. Reiten, {\it On a generalized version of Nakayama conjecture}, Proc. Amer. Math. Soc.
{\bf 52} (1975), 69--74.


\bibitem{AB}
L.  Avramov, R.-O. Buchweitz, {\it Support varieties and cohomology over complete intersections}, Invent. Math. {\bf 142} (2000), 285--318.


\bibitem{AB2}
L. Avramov, R.-O. Buchweitz, L. Şega, {\it Extensions of a dualizing complex by its ring: commutative versions of a conjecture of Tachikawa}, J. Pure Appl. Algebra {\bf 201} (2005), 218--239.

\bibitem{AGP}
{L. Avramov, V. Gasharov, I. Peeva}, {\it Complete intersection dimension}, Publ. Math. Inst. Hautes \'Etudes Sci. \textbf{86} (1997), 67--114.

\bibitem{avra-M}{L.  Avramov, A. Martsinkovsky}, {\it Absolute, relative, and Tate cohomology of modules of finite Gorenstein dimension}, Proc. London Math. Soc. {\bf 85} (2002), 393--440.

\bibitem{Ber}{R. Berger}, {\it Differentialmoduln eindimensionaler lokaler Ringe}, Math. Z. {\bf 81} (1963), 326--354.

\bibitem{BeSur}{R. Berger}, {\it Report on the torsion of the differential module of an algebraic curve}, Algebraic Geometry and its Applications, Springer, New York, 1994, pp. 285--303.


\bibitem{BJ} P. A. Bergh, D. A. Jorgensen, {\it The depth formula for modules with reducible complexity}, Illinois J. Math. {\bf 55} (2011) 465--478.

\bibitem{BH}
W. Bruns, J. Herzog, {\it Cohen–Macaulay rings, revised edition}, Cambridge Studies in Advanced Mathematics, \textbf{39},
Cambridge University Press, Cambridge, 1998.

\bibitem{B-E}{D. Buchsbaum, D. Eisenbud}, {\it Algebra structures for finite free resolutions, and some structure theorems for ideals of codimension 3}, Amer. J. Math. {\bf 99} (1977), 447--485.

\bibitem{CDT}
O. Celikbas, H. Dao, R. Takahashi, {\it Modules that detect finite homological dimensions}, Kyoto J. Math. {\bf 54} (2014), 295--310.



\bibitem{CJ0}
L. W. Christensen, D. A. Jorgensen, {\it Tate (co)homology via pinched complexes}, Trans. Amer. Math. Soc. {\bf 366} (2014), 667--689.

\bibitem{CJ}
L. W. Christensen, D. A. Jorgensen, {\it Vanishing of Tate homology and depth formulas over local rings}, J. Pure Appl. Algebra {\bf 219} (2015), 464--481.



\bibitem{DKT}
H. Dao, T. Kobayashi, R. Takahashi, {\it Burch ideals and Burch rings}, Algebra Number Theory {\bf 14} (2020), 2121--2150.


\bibitem{Dao-Se}{H. Dao, T. Se}, {\it Finite F-type and F-abundant modules}, 2016, available at https://arxiv.org/abs/1603.00334


\bibitem{F} H. Flenner, {\it Extendability of diferential forms on non-isolated singularities}, Invent. Math. {\bf 94} (1988), 317--326.

\bibitem{GT}
S. Goto, R. Takahashi, {\it On the Auslander-Reiten conjecture for Cohen-Macaulay local rings}, Proc. Amer. Math. Soc. {\bf 145} (2017), 3289--3296.

\bibitem{HH}
D. Hanes, C. Huneke, {\it Some criteria for the Gorenstein property}, J. Pure Appl. Algebra 201 (1–3) (2005) 4–16, MR 2158743.

\bibitem{Her0}{J. Herzog}, {\it Ein Cohen-Macaulay-Kriterium mit Anwendungen auf den Konormalenmodul und den Differentialmodul}, Math. Z. 163 (1978), 149--162.

\bibitem{Her}{J. Herzog}, {\it The module of differentials}, Lecture notes, Workshop on Commutative Algebra and its
Relation to Combinatorics and Computer Algebra, 16--27 May 1994, Trieste, Italy.


\bibitem{HIW}
C. Huneke, S. Iyengar, R. Wiegand, {\it Rigid ideals in Gorenstein rings of dimension one}, Acta Math. Vietnam. {\bf 44} (2019), 31--49. 

\bibitem{hujo}{C. Huneke, D. A. Jorgensen}, {\it Symmetry in the vanishing of Ext over Gorenstein rings}, Math. Scand. \textbf{93} (2003), 161--184.


\bibitem{H-L-0}{C. Huneke, G. J. Leuschke}, {\it Two theorems about maximal Cohen-Macaulay modules}, Math. Ann. {\bf 324} (2002), 391--404.

\bibitem{HL}
C. Huneke, G. J. Leuschke, {\it On a conjecture of Auslander and Reiten}, J. Algebra {\bf 275} (2004), 781--790.



\bibitem{HSV}
C. Huneke, L. M. Şega, A. N. Vraciu, {\it Vanishing of Ext and Tor over some Cohen–Macaulay local rings}, Illinois J. Math. {\bf 48} (2004), 295--317.


\bibitem{HuS} C. Huneke, I. Swanson, {\it Integral Closure of Ideals, Rings, and Modules}, London Math. Soc. Lecture Note Ser. {\bf 336}, Cambridge Univ. Press, Cambridge, 2006.


\bibitem{Hu-U} C. Huneke, B. Ulrich, {\it Powers of licci ideals}, Commutative algebra (Berkeley, CA, 1987) (M. Hochster,
C. Huneke, and J. Sally, eds.), Math. Sci. Res. Inst. Publ., vol. {\bf 15}, Springer, 1989, pp. 339--346.

\bibitem{HW}
C. Huneke, R. Wiegand, {\it Tensor products of modules and the rigidity of Tor}, Math. Ann. {\bf 299} (1994), 449--476; Correction: Math. Ann. {\bf 338} (2007), 291--293.


\bibitem{Iacob} A. Iacob, {\it Absolute, Gorenstein, and Tate torsion modules}, Comm. Algebra {\bf 35} (2007), 1589--1606.

\bibitem{J} D. A. Jorgensen, {\it On tensor products of rings and extension conjectures}, J. Commut. Algebra
{\bf 1} (2009), 635--646.

\bibitem{JL} D. A. Jorgensen, G. J. Leuschke, {\it On the growth of the Betti sequence of the canonical module}, Math. Z.
{\bf 256} (2007), 647--659.

\bibitem{VC} V. H. Jorge-P\'erez, C. B. Miranda-Neto, {\it Homological aspects of derivation modules and critical case of the Herzog-Vasconcelos conjecture},
Collect. Math. {\bf 73} (2022), 203--219.


\bibitem{JD} P. Jothilingam, T. Duraivel, {\it Test modules for projectivity of duals}, Comm.
Algebra {\bf 38} (2010), 2762--2767.

\bibitem{KOT}
K. Kimura, Y. Otake, R. Takahashi, {\it Maximal Cohen-Macaulay tensor products and vanishing of Ext modules}, to appear in Bull. London Math. Soc.

\bibitem{K}{E. Kunz}, {\it K\"ahler Differentials}, Adv. Lectures Math., Vieweg, 1986.

\bibitem{Lip} J. Lipman, {\it Free derivation modules on algebraic varieties}, Amer. J. Math. {\bf 87} (1965), 874--898.

\bibitem{LM}
J. Lyle, J. Monta\~no, {\it Extremal growth of Betti numbers and trivial vanishing of (co)homology}, Trans. Amer. Math. Soc. {\bf 373} (2020), 7937--7958.

\bibitem{MN}
C. B. Miranda-Neto, {\it Maximally differential ideals of finite projective dimension}, Bull. Sci. Math. {\bf 166} (2021), 102936.

\bibitem{STAK}
A. Sadeghi, R. Takahashi, {\it Two generalizations of Auslander–Reiten duality and applications}, Illinois J. Math. {\bf 63} (2019), 335--351.

\bibitem{S-S}{G. Scheja, U. Storch}, {\it Differentielle Eigenschaften der Lokalisierungen analytischer Algebren}, Math.
Ann. {\bf 197} (1972), 137--170.

\bibitem{T} R. Takahashi, {\it On G-regular local rings}, Comm. Algebra {\bf 36} (2008), 4472--4491. 

\bibitem{V1} W. V. Vasconcelos, {\it Ideals generated by $R$-sequences},
J. Algebra {\bf 6} (1967), 309--316.

\bibitem{Y}
K. Yoshida, {\it Tensor products of perfect modules and maximal surjective Buchsbaum modules}, J. Pure Appl. Algebra {\bf 123} (1998), 313--323. 


\end{thebibliography}
\end{document}